\def\R{\mathbb{R}}
\def\N{\mathbb{N}}
\def\Z{\mathbb{Z}}
\def\Q{\mathbb{Q}}
\DeclareMathOperator{\Frac}{Frac}
\DeclareMathOperator{\Fil}{Fil}
\DeclareMathOperator{\fil}{Fil}
\DeclareMathOperator{\id}{id}
\DeclareMathOperator{\Car}{Car}
\DeclareMathOperator{\gr}{gr}
\DeclareMathOperator{\supp}{Supp}
\DeclareMathOperator{\suppi}{Supp_\infty}
\newtheorem{theorem}{Théoreme}[section]
\newtheorem{lemma}[theorem]{Lemme}
\newtheorem{prop}[theorem]{Proposition}
\newtheorem{cor}[theorem]{Corollaire}
\newtheorem{definition}[theorem]{Définition}
\newtheorem{remark}[theorem]{Remarque}
\theoremstyle{definition}
\newtheorem{example}[theorem]{Exemple}
\def\V{\mathcal{V}}
\def\I{\mathcal{I}}
\def\B{\mathcal{B}}
\def\a{\mathcal{A}}
\def\D{\mathcal{D}}
\def\Dk{\widehat{\mathcal{D}}^{(0)}_{\X, k}}
\DeclareRobustCommand{\Dks}{\mathcal{D}_{X, k}}
\DeclareRobustCommand{\Dkq}{\widehat{\mathcal{D}}^{(0)}_{\X, k, \Q}}
\def\X{\mathfrak{X}}
\def\m{\mathfrak{m}}
\def\Spf{\mathrm{Spf}\,}
\def\Spec{\mathrm{Spec}\,}
\def\Sp{\mathrm{Sp}\,}
\def\O{\mathcal{O}}
\def\d{\mathrm{d}}
\def\Nb{\overline{N}_k}
\def\E{\mathcal{E}}
\def\F{\mathcal{F}}
\def\Eo{\mathcal{E}^\circ}
\def\M{\mathcal{M}}
\def\Nn{\mathcal{N}}
\def\L{\mathcal{L}}
\def\Ext{\mathcal{E}xt}
\def\Hom{\mathcal{H}om}
\def\Irr{\mathrm{Irr}}
\def\CC{\mathrm{CC}}
\def\vp{\V\langle \partial \rangle^{(k , r)}}
\def\vqp{K\langle \partial \rangle^{(k , r)}}
\def\Di{\mathcal{D}_{\X, \infty}}
\def\Dku{\widehat{\mathcal{D}}^{(0)}_{\X, k , \Q}(U)}
\def\Nm{\overline{N}_\mu}
\def\nb{\overline{n}_k}
\def\nm{\overline{n}_\mu}
\DeclareRobustCommand{\Ek}{\mathcal{E}_k(U)}
\DeclareRobustCommand{\Fkr}{\mathcal{F}_{k , r}(U)}
\def\Fi{\F_\infty}
\DeclareRobustCommand{\Fir}{\F_{\infty , r}}
\def\Mkr{\tilde{\mathcal{M}}_{k,r}}
\def\Mr{\tilde{\mathcal{M}}_{\infty , r}}
\DeclareRobustCommand{\Fkrs}{\mathcal{F}_{k , r}^*}
\DeclareRobustCommand{\Firs}{\F_{\infty , r}^*}
\def\Mkrs{\mathcal{M}_{k,r}^*}
\def\Mrs{\mathcal{M}_r^*}
\title{Cycle caractéristique pour les $\mathcal{D}$-modules coadmissibles sur une courbe formelle}
\author{Raoul Hallopeau}
\date{}
\begin{document}

\maketitle

\selectlanguage{english}
\begin{abstract}
Let $\mathfrak{X}$ be a formal smooth quasi-compact curve over a complete discrete valuation ring of mixed characteristic.
We consider over $\mathfrak{X}$ the sheaves of differential operators $\widehat{\mathcal{D}}^{(0)}_{\mathfrak{X}, k , \mathbb{Q}}$ with a congruence level $k \in \mathbb{N}$ and their projective limit $\mathcal{D}_{\mathfrak{X}, \infty} = \varprojlim_k \widehat{\mathcal{D}}^{(0)}_{\mathfrak{X}, k , \mathbb{Q}}$.
In this article, we define a characteristic variety for coadmissible $\mathcal{D}_{\mathfrak{X}, \infty}$-modules as a closed subset of the cotangent space $T^*\mathfrak{X}$.
For this purpose, we introduce a microlocalization sheaf of $\mathcal{D}_{\mathfrak{X}, \infty}$ in which the derivation is locally invertible.
We deduce a notion of "sub-holonomicity" for coadmissible $\mathcal{D}_{\mathfrak{X}, \infty}$-modules which is equivalent to being generically an integrable connection.
Finally, we associate characteristic cycles to sub-holonomic modules proving that the latter are of finite length.
\end{abstract}

\selectlanguage{french}
\tableofcontents

\section{Introduction}\label{section1}

Soit $\X$ un schéma formel lisse sur un anneau complet $\V$ de valuation discrète et de caractéristique mixte.
Huyghe-Schmidt-Strauch ont introduit dans l'article \cite{huyghe} un faisceau $\Di = \varprojlim_k \Dkq$ d'opérateurs différentiels obtenu en rajoutant des niveaux de congruence $k \in \N$ aux faisceaux d'opérateurs arithmétiques de Berthelot.
Les modules à considérer, analogues des modules cohérents sur un sch\'ema classique, sont les $\Di$-modules coadmissibles.
Plus précisément, si $U$ est un ouvert affine de $\X$ muni de coordonnées locales dont les dérivations sont notées $\partial_1 , \dots , \partial_d$ et si $\varpi$ une uniformisante de $\V$, alors
\[ \Dk(U) = \left\{ \sum_{\alpha \in \N^d} a_\alpha \cdot \partial_1^{\alpha_1} \dots \partial_d^{\alpha_d},~~ a_\alpha \in \O_\X(U) ~,  ~~|a_\alpha| \cdot \varpi^{-k | \alpha|} \underset{ | \alpha| \to \infty}{\longrightarrow} 0 \right\} \]
avec $| \alpha| := \alpha_1 + \dots + \alpha_d \in \N$.
Notons $\O_{\X , \Q} := \O_\X \otimes_\V K$ et $\Dkq := \Dk \otimes_\V K$ o\`u $K := \text{Frac}(\V)$.
Les inclusions d'algèbres $\widehat{\mathcal{D}}^{(0)}_{\X , k +1 , \Q}(U) \subset \Dkq(U)$ induisent des morphismes de transition $\widehat{\mathcal{D}}^{(0)}_{\X , k+1 , \Q} \to \Dkq$ pour tout niveau de congruence $k$.
Le faisceau $\Di := \varprojlim_k \Dkq$ est localement donn\'e par la $K$-algèbre de Fréchet-Stein
\[ \Di(U) =\displaystyle \left\{ \sum_{\alpha \in \N^d} a_\alpha \cdot \partial_1^{\alpha_1} \dots \partial_d^{\alpha_d} :  \,  a_\alpha \in \O_{\X, \Q} (U)~~ \mathrm{tq} ~~ \forall \eta>0,~  \lim_{|\alpha| \to \infty} |a_\alpha| \cdot \eta^{|\alpha|} =0 \right\}.\]
Par d\'efinition, un $\Di$-module est coadmissible s'il est isomorphe \`a une limite projective de $\Dkq$-modules cohérents $\M_k$ tels que $\Dkq \otimes_{\widehat{\mathcal{D}}^{(0)}_{\X, k+1 , \Q}} \M_{k+1} \simeq \M_k$.
La catégorie abélienne des $\Di$-modules coadmissibles ainsi obtenue est équivalente \`a la catégorie des modules coadmissibles sur l'espace de Zariski-Riemann associ\'e \`a $\X$.
Mentionnons qu'Ardakov-Wadsley ont développé dans \cite{ardakov} une théorie de $\wideparen{\D}$-modules coadmissibles sur un espace analytique rigide lisse.
Leur construction coïncide avec celle de Huyghe-Schmidt-Strauch pour la fibre générique $\X_K$ du sch\'ema formel lisse $\X$.
Nous introduisons dans cet article une variété caractéristique pour les $\Di$-modules coadmissibles dans le cas d'une courbe formelle lisse $\X$. 
Cette variété est un outil tr\`es important dans l'\'etude des $\D$-modules d'une variété complexe lisse $X$.
En effet, les $\D$-modules holonomes sont classiquement caractérisés par le fait d'avoir une variété caractéristique de dimension inférieure ou égale \`a celle de $X$.
Néanmoins, contrairement \`a la situation complexe, la variété caractéristique n'est a priori pas suffisante pour obtenir une bonne notion d'holonomie en géométrie analytique rigide.
Il est par exemple nécessaire d'ajouter une structure de Frobenius aux $\D$-modules arithmétiques de Berthelot afin de démontrer des énoncés de finitude.
Soulignons que l'article \cite{ABW2} de Ardakov-Bode-Wadsley  introduit une catégorie de modules faiblement holonomes de $\wideparen{\D}$-modules coadmissibles en utilisant la caractérisation cohomologique classique des modules holonomes.
Cependant, cette catégorie demeure trop vaste, car les $\wideparen{\D}$-modules faiblement holonomes ne sont par exemple pas tous de longueur finie.
Par ailleurs, Bode a introduit dans la continuité de l'article \cite{Bode} une notion d'holonomie pour les $\wideparen{\D}$-modules coadmissibles suivant la construction de Caro pour les $\D$-modules arithmétiques, sans utiliser de variété caractéristique.
Une telle variété pour les $\D$-modules coadmissibles demeure néanmoins un invariant important \`a définir.
Sur le schéma formel lisse $\X$, les opérateurs différentiels considérés ne sont pas d'ordre fini.
Par conséquent, la méthode classique des variétés complexes pour définir la variété caractéristique ne s'applique plus.
Une autre approche naturelle, adoptée dans ce papier, r\'eside dans l'utilisation de techniques de microlocalision appliquées au faisceau $\Di$ des opérateurs différentiels.
Nous nous limitons dans ce papier \`a la dimension une, unique cas o\`u nous obtenons un cycle caractéristique pour les $\Di$-modules coadmissibles.
Mentionnons qu'une construction assez similaire a déjà été faite pour les opérateurs différentiels arithmétiques par Abe dans \cite{abe} et par Abe-Marmora dans \cite{adriano}.
L'obtention des multiplicit\'es verticales pour les $\Di$-modules coadmissibles est une adaptation de leurs arguments \`a notre cas.
Par ailleurs, ce travail devrait permettre dans un futur article de définir une vari\'et\'e caract\'eristique dans le cas des courbes analytiques rigides lisses admettant un mod\`ele formel lisse en passant \`a l'espace de Zariski-Riemann.

\vspace{0.4cm}

D\'etaillons maintenant le contenu de ce papier.
Soit $\X$ une $\V$-courbe formelle lisse connexe et quasi-compacte de fibre sp\'eciale $X$.
Des variétés caractéristiques $\Car(\M_k) \subset T^* X$ ont été introduites dans l'article \cite{hallopeau1} pour les $\Dkq$-modules cohérents $\M_k$ en copiant la d\'efinition classique complexe après r\'eduction  modulo $\varpi$ des objects (ce qui permet de se ramener au cas d'opérateurs différentiels finis).
La section \ref{section2} de cet article consiste en des rappels sur la construction de ces variétés caractéristiques, ainsi que sur les $\Di$-modules coadmissibles et les modules faiblement holonomes.
Considérons maintenant un $\Di$-module coadmissible $\M = \varprojlim_k \M_k$  non nul.
Il est important de remarquer que les morphismes de transition $\M_{k+1} \to \M_k$ n'induisent pas de morphismes naturels intéressants entre les différentes variétés caractéristiques $\Car(\M_k)$.
En effet, nous perdons les informations sur les morphismes de transition en les réduisant modulo $\varpi$.
Ainsi, nous ne pouvons obtenir une variété caractéristique pour le module coadmissible $\M = \varprojlim_k \M_k$ \`a partir des variétés caractéristiques $\Car(\M_k)$.
Par ailleurs, les $\Di$-modules \og holonomes \fg \, doivent vérifier la condition suivante afin d'être de longueur finie.
Pour un opérateur différentiel non nul $P$ de $\Di$, le $\Di$-module coadmissible $\Di / P$ doit être \og holonome \fg\, seulement si $P$ est un opérateur différentiel fini.
En effet, si $P$ n'est pas un opérateur fini, alors le $\Dkq$-module cohérent $\Dkq/ P$ est un module de longueur finie augmentant avec $k$ tout en \'etant localement un $\O_{\X , \Q}$-module libre de dimension de plus en plus grande.
Nous ne pouvons donc pas espérer avoir de bonnes propriétés de finitude pour le module coadmissible $\Di / P = \varprojlim_k \Dkq /P$.
Notons $(x , \xi)$ un système de coordonnées locales sur $T^*U$ associée à la coordonnée étale de $U$.
Si $P = \sum_{n = 0}^d a_n \cdot \partial^n$ est un opérateur différentiel fini d'ordre $d$, alors pour $k$ suffisamment grand
\[ \Car \left( \Dkq / P \right) \cap T^*U = \{ (x, \xi) \in T^*U : \sigma(\bar{P})(x,\xi) = a_d(t) \cdot \xi^d = 0 \}. \]
Ces variétés caractéristiques ne dépendent donc plus de $k$ à partir d'un certain niveau de congruence $k$.
La figure \ref{figure1} repr\'esente la variété caractéristique des $\Dkq$-modules cohérents $\M_k$, o\`u $x_1 , \dots , x_s$ désignent les zéros du coefficient dominant $a_d$ de $P$ dans l'ouvert $U$.
Soulignons que dans la théorie complexe des $\D$-modules (et pour un niveau de congruence $k$ fix\'e), tout quotient de $\D$ par un opérateur différentiel $P$ est holonome en dimension un.
Les arguments classiques ne peuvent donc s'appliquer au faisceau $\Di$ et cela rajoute une difficulté supplémentaire \`a l'introduction d'une variété caractéristique pour les $\Di$-modules coadmissibles.

\begin{figure}[h!]
\begin{center}
\caption{$\Car (\Dkq / P) \cap T^* U$}
\begin{tikzpicture}
\label{figure1}
\draw[thick][->] (-2,0) -- (8,0);
\draw (8.2,0) node[right] {$x$};
\draw [thick][->] (0,-1) -- (0,3.5);
\draw (0,3.7) node[above] {$\xi$};
\draw (0,0) node[below right] {$0$};

\draw[red][thick] (-1.5,0) -- (7,0);

\draw[red][thick] (-1, -1) -- (-1,3) ;
\draw (-1,0) node[below right]{$x_1$} ;

\draw[red][thick] (1.8, -1) -- (1.8,3) ;
\draw (1.8,0) node[below right]{$x_2$} ;

\draw[red][thick] (3.5 , -1) -- (3.5,3) ;
\draw (3.5,0) node[below right]{$x_i$} ;

\draw[red][thick] (6, -1) -- (6,3) ;
\draw (6,0) node[below right]{$x_s$} ;

\draw (7,3) node{$T^*U$} ;

\end{tikzpicture}
\end{center}
\end{figure}

Afin de r\'esoudre les deux points expliqu\'es ci-dessus, nous construisons dans les sections \ref{section3} et \ref{section4} de cet article un microlocalisé de l'alg\`ebre $\Di(U)$ des opérateurs différentiels \`a convergence rapide dont les principales propriétés sont données dans le théorème suivant.

\begin{theorem}
Il existe une $K$-algèbre microlocalisé $\Fi(U)$ de $\Di(U)$, obtenue comme union croissante de $K$-algèbres de Fr\'echet-Stein, vérifiant la condition d'inversibilit\'e suivante : un \'el\'ement $P$ de $\Di(U)$ est inversible dans $\Fi(U)$ si et seulement si $P = \sum_{n=0}^d a_n \cdot \partial^n$ est un opérateur fini d'ordre $d$ de $\Di(U)$ dont le coefficient dominant $a_d$ est inversible dans $\O_{\X , \Q}(U)$.
\end{theorem}

Le support du module $\Fi/ P$ permet de distinguer les opérateurs infinis des opérateurs finis.
En effet, si $P$ est un opérateur infini, alors $(\supp \Fi / P ) = \X$ puisque $P$ n'est pas inversible dans l'algèbre $\Fi(\X)$. Dans ce cas, $\dim (\supp \Fi / P) = 1$.
Si maintenant $P$ est un opérateur fini de coefficient dominant $a_d$, alors le support $(\supp \Fi / P)$ est l'union finie des zéros $x_1, \dots , x_s$ de la fonction $a_d$ dans $\X$.
Ainsi, $\dim (\supp \Fi / P) = 0$ et les points de ce support sont les abscisses des composantes irréductibles verticales des variétés caractéristiques des modules $\Dkq / P$ données dans la figure \ref{figure1} pour $k$ suffisamment grand.
Notons $T^*\X$ le fibré cotangent de $\X$.
Le microlocalis\'e $\Fi$ permet d'établir dans la section \ref{section5} le resultat suivant.

\begin{theorem}
Nous pouvons associer \`a tout $\Di$-module coadmissible $\M = \varprojlim_k \Dkq$ une variété caractéristique $\Car(\M)$, partie fermée de $T^*\X$, vérifiant l'inégalité de Bernstein : si $\M \neq 0$, alors $\Car(\M)$ est égal \`a $T^*\X$ ou bien équidimensionnel de dimension 1.
\end{theorem}

Par exemple, la variété caractéristique $\Car(\Di / P)$ coincide avec les vari\'et\'es de la figure \ref{figure1} (pour $k$ assez grand) lorsque $P$ un op\'erateur différentiel fini non nul.
Dans le cas contraire, $\Car(\Di / P) = T^*\X$.
Nous étudions enfin dans la section \ref{section6} la notion d'holonomie induite par cette variété caractéristique.

\begin{definition}
Un $\Di$-module coadmissible $\M$ est sous-holonome si $\dim(\Car(\M)) \leq 1$.
\end{definition}

D'après ce qui vient d'être dit, le module coadmissible $\Di / P$ est sous-holonone si et seulement si $P$ est un op\'erateur différentiel fini non nul.
Nous démontrons dans la partie \ref{section6} les propriétés suivantes des $\Di$-modules sous-holonomes.

\begin{theorem}\,
\begin{enumerate}
\item
Un $\Di$-module coadmissible est sous-holonome si et seulement si il est g\'en\'eriquement une connexion intégrable.
\item
Il est possible d'associer \`a tout $\Di$-module sous-holonome un cycle caractéristique induit par sa variété caractéristique.
\item
Les $\Di$-modules sous-holonones définissent une sous-catégorie abélienne et artinienne des modules faiblement holonomes de Ardakov-Bode-Wadsley.
\end{enumerate}
\end{theorem}

Cette catégorie de $\Di$-modules sous-holonomes n'est néanmoins pas stable par certaines des six opérations usuelles, d'o\`u l'utilisation du terme \og sous-holonomes \fg.
Par exemple, l'article \cite{bitoun} de Bitoun-Bode présente des exemples explicites d'images directes de modules à connexions intégrables qui ne sont pas même coadmissibles.
Soit $X_K = \Sp (K \langle x \rangle)$ le disque analytique rigide sur $K$, $U_K = X_K \backslash \{0\}$, $j : U_K \hookrightarrow X_K$ l'injection naturelle et $\partial = \frac{d}{d x}$.
Notons $P_\lambda = x \cdot \partial - \lambda$ pour $\lambda \in K$.
Le $\wideparen{\D}_{U_K}$-module coadmissible $\M_\lambda = \wideparen{\D}_{U_K}  / P_\lambda \simeq \O_{U_K} \cdot x^\lambda$ est une connexion intégrable puisque $x$ est inversible dans l'ouvert $U_K$.
Le théorème 1.1 de \cite{bitoun} nous dit que le module $j_*\M_\lambda$ est coadmissible si et seulement si $\lambda$ est de type positif.
Il existe cependant des scalaires de type zéro.

\section*{Notations}

\begin{enumerate}
\item[$\bullet$]
$\V$ est un anneau complet de valuation discrète de caractéristique mixte $(0,p)$, d'idéal maximal $\m$ et de corps résiduel $\kappa$ supposé parfait.
On note $\varpi$ une uniformisante de $\V$, $| \cdot |$ la valeur absolue normalisée (autrement dit, $|\varpi| = p^{-1}$) et $K =\Frac(\V)$ son corps des fractions.
\item[$\bullet$]
$X$ est une $\kappa$-courbe lisse connexe quasi-compacte et $x \in X$ est un point fermé.
\item[$\bullet$]
$\X$ est un $\V$-schéma formel lisse relevant $X$, d'idéal de définition engendré par l'uniformisante $\varpi$ et $\X_K$ est l'espace analytique rigide associé à $\X$.
\item[$\bullet$]
$t$ est un relèvement local sur $\O_\X$ d'une uniformisante en $x$ ($\O_{X , x}$ est un anneau de valuation discrète puisque $X$ est une courbe). Alors $\d t$ est une base de $\Omega^1 _{\X,x}$. On note $\partial$ la dérivation associée.
\item[$\bullet$]
$U$ est un ouvert affine de $\X$ contenant $x$ sur lequel on dispose d'une coordonnée étale $t$ associée à $x$.
\item[$\bullet$]
Sauf mention contraire, les idéaux et les modules considérés sont tous à gauche.
\end{enumerate}

\section{$\Di$-modules faiblement holonomes}\label{section2}

La partie \ref{section2.1} regroupe quelques rappels sur les $\Dkq$-modules holonomes introduits dans \cite{hallopeau1} pour un niveau de congruence $k \in \N$ fixé.
La section \ref{section2.2} rappelle les principales propriétés du faisceau $\Di = \varprojlim_k \Dkq$ et des $\Di$-modules coadmissibles.
Enfin, on introduit dans la partie \ref{section2.3} la notion de $\Di$-modules coadmissibles faiblement holonomes au sens de Ardakov-Bode-Wadsley dans l'article \cite{ABW2}.

\subsection{$\Dkq$-modules holonomes}\label{section2.1}

Le faisceau $\D^{(0)}_{\X , k}$ est défini comme un sous-faisceau dépendant d'un paramètre $k \in \N$ appelé \og niveau de congruence \fg\, du faisceau usuel $\mathcal{D}^{(0)}_\X$ des opérateurs différentiels sur la courbe formelle lisse $\X$.
Localement, $\D^{(0)}_{\X , k}(U)$ est la $\V$-algèbre engendrée par $\O_\X(U)$ et par la dérivation $\varpi^k\partial$.
Plus précisément,
\[ \D^{(0)}_{\X , k}(U) = \left\{ \sum_{n \in \N}  a_n \cdot (\varpi^{k}\partial)^n , ~~ a_n \in \O_\X(U), ~~ a_n = 0 ~~ \mathrm{pour} ~~ n >>0 \right\}. \]
Autrement dit, $\D^{(0)}_{\X , k}(U)$ est le $\O_\X(U)$-module libre de base les puissances de $\varpi^{k} \partial$, ie $\D^{(0)}_{\X , k}(U) = \bigoplus_{n\in \N} \O_\X(U) \cdot (\varpi^k \partial)^n$.
La dérivation $\varpi^k \partial$ est un endomorphisme $\V$-linéaire de $\O_\X(U)$.
La loi multiplicative de la $\V$-algèbre des opérateurs différentiels $\D^{(0)}_{\X , k}(U)$ est simplement la composition d'applications linéaires.
En pratique, on a $(\varpi ^k \partial) ^n \cdot (\varpi ^k \partial) ^m = (\varpi ^k \partial) ^{n+m}$ et le crochet
\[ \forall a \in \O_\X(U), ~~ [\varpi^k \partial , a ] = \varpi^k \partial \cdot a - a \cdot \varpi^k \partial = \varpi^k \partial(a) . \]
On retrouve $\mathcal{D}^{(0)}_\X$ lorsque $k=0$.
L'ouvert affine $U$ de $\X$ est le spectre formel d'une $\V$-algèbre $\O_\X(U)$ telle que $\O_{\X , \Q}(U) := \O_\X(U)\otimes _V K$ soit une $K$-algèbre affinoïde.
Ces algèbres sont intègres puisque $\X$ est connexe et lisse.
Ainsi, la norme spectrale de l'algèbre $\O_\X(U)$, notée dans la suite $| \cdot |$ et définie par $|f| = p^{-k}$ si $f \in \varpi^k \cdot \O_\X(U) \backslash \varpi^{k+1} \cdot \O_\X(U)$, est complète et non-archimédienne.
Soit $\Dk := \varprojlim_i \left(\D^{(0)}_{\X , k}/ \varpi^{i+1} \D^{(0)}_{\X , k}\right)$ le complété $\varpi$-adique de $\D^{(0)}_{\X , k}$. Localement,
\[ \Dk(U) = \left\{ \sum_{n = 0 }^{\infty}  a_n \cdot (\varpi^k\partial)^n , ~~ a_n \in \O_\X(U) , ~~| a_n |\underset{n \to \infty}{\longrightarrow} 0 \right\} . \]
On note $\O_{\X , \Q} := \O_\X \otimes_\V K$ et $\Dkq := \Dk \otimes_\V K$.
Les éléments de $\Dkq(U)$ sont les opérateurs différentiels convergents pour la topologie $p$-adique à coefficients dans la $K$-algèbre affinoïde $\O_{\X , \Q}(U)$.
Il est démontré dans l'article \cite{huyghe} que les algèbres $\Dk(U)$ et $\Dkq(U)$ sont noetheriennes et que $\Dk$ et $\Dkq$ sont des faisceaux d'anneaux cohérents.
Pour tous entiers $k'>k$, il est clair que $\widehat{\mathcal{D}}^{(0)}_{\X, k' }(U) \subset \Dk(U)$ et $\Dk(U)$ est une sous-algèbre de l'algèbre des opérateurs différentiels cristallins $\widehat{\mathcal{D}}^{(0)}_\X(U) = \widehat{\mathcal{D}}^{(0)}_{\X, 0}(U)$.

\begin{definition}
Soit $P = \sum_{n =0}^\infty a_n \cdot (\varpi^k \partial)^n$ un opérateur différentiel de $\Dkq(U)$. On note $| P |_k := \max_{n \geq 0} \{ |a_n| \}$ et $\Nb(P) := \max\{ n \in \N : |a_n| = |P|_k \}$.
\end{definition}

On v\'erifie facilement que $\Dk(U) = \{ P \in \Dkq(U), ~ |P|_k \leq 1 \}$.
De plus, l'entier $\Nb(P)$ coïncide avec l'ordre de $ \bar{P} = (\alpha P \mod \varpi)$ dans $\Dks(U)$ pour tout scalaire $\alpha \in K$ vérifiant $|\alpha P|_k = 1$, où $\Dks := \Dk\otimes_\V \kappa$ est un faisceau d'opérateurs différentiels sur la fibre spéciale $X$ de $\X$.
On appelle donc coefficient dominant de $P$ son coefficient d'indice $\Nb(P)$.
Par ailleurs, il est montré dans \cite[lemme 2.5]{hallopeau1} que la norme $| \cdot |_k$ et la fonction $\Nb$ ne dépendent pas du choix de la coordonnée locale de $U$, ni de l'ouvert $U$.
Les résultats suivants correspondent à \cite[proposition 2.6, proposition 2.7]{hallopeau1}.

\begin{prop}\label{prop2.2.3}~
\begin{enumerate}
\item
Les algèbres $\Dk(U)$ et $\Dkq(U)$ sont complètes pour $| \cdot |_k$ et la norme induite sur tout $\Dkq$-module cohérent est complète.
\item
Pour tous opérateurs différentiels $P$ et $Q$ de $\Dkq(U)$, on a $| P Q |_k = | P |_k \cdot |Q|_k$ et $\Nb(PQ) = \Nb(P) + \Nb(Q)$.
\item
Un opérateur différentiel $P = \sum_{n=0}^\infty a_n \cdot (\varpi^k\partial)^n$ de $\Dkq(U)$ est inversible dans $\Dkq(U)$ si et seulement si $\Nb(P) = 0$ et si $a_0 \in \O_{\X , \Q}(U)^\times$.
\end{enumerate}
\end{prop}

On rappelle que $\X$ et $X$ ont le même espace topologique ; on identifie $U$ à un ouvert affine de $X$.
Soit $\partial_k$ l'image de la dérivation $\varpi^k \partial$ dans la réduction $\D_{X,k}(U)$ de $\Dk(U)$ modulo $\varpi$. Alors $\D_{X,k}(U) = \bigoplus_{n \in \N} \O_X(U) \cdot \partial_k^n$.
Lorsque $k \geq 1$, l'algèbre $\Dks(U)$ est commutative puisque l'algèbre $(\Dkq(U) , | \cdot |_k)$ est quasi-abélienne d'après la proposition \ref{propqa} ci-dessous :
\[ \exists \gamma\in [0 , 1[~ \text{tel que}~\forall P, Q \in \Dkq(U), ~ |PQ-QP|_k \leq \gamma \cdot |P|_k \cdot |Q|_k . \]
Ces propriétés sont fausses lorsque $k = 0$. Par exemple, $[\partial , t] = 1$ dans $\D_\X^{(0)}(U)$ et dans $\D_X(U)$.
On munit le faisceau $\Dks$ de la filtration croissante donnée localement par l'ordre des opérateurs différentiels : $\forall m \in \N, ~~ \Fil^m (\D_{U , k}) = \bigoplus_{n = 0}^m \O_U\cdot \partial_k^n$.
On note $\gr \Dks = \bigoplus_{m\in \N} \gr_m \Dks$ le gradué associé et $\xi_k$ l'image de la dérivation $\partial_k$ dans $\gr_1 (\D_{U , k})$.
Localement, $\gr (\D_{U , k}) \simeq \O_U[\xi_k]$ est un anneau de polynômes en une variable sur $\O_U$.
En particulier, le fibré cotangent $T^* X$ de $X$ est isomorphe à $\Spec (\gr (\Dks))$ en tant que $\kappa$-schéma.
On identifie ces deux schémas dans la suite. On note $\pi_X : T^* X \to X$ la projection canonique.
Soit $P = \sum_{n=0}^d a_n \cdot \partial_k^n$ un opérateur différentiel de $\D_{X, k}(U)$ d'ordre $d$. On lui associe un élément du gradué $\gr \Dks(U)$ appelé \textit{symbole principal} de $P$ par
\[ \sigma(P) := a_d \cdot \xi_k^d \in \gr_d\D_{X, k}(U) .\]
Une filtration $(\fil^\ell E)_{\ell\in\N}$ d'un $\Dks$-module quasi-cohérent à gauche $E$ est une suite croissante $(\fil^\ell E)_\ell$ de sous-$\O_X$-modules quasi-cohérents de $E$ telle que
\begin{enumerate}
\item
$ E = \bigcup_{\ell\geq 0} \fil^\ell E$ ;
\item
$\forall n , \ell \in \N$, $(\fil^n \Dks) \cdot (\fil^\ell E) \subset \fil^{\ell + n} E$.
\end{enumerate}
Le gradué $\gr E$ pour une telle filtration est un $\gr \Dks$-module. La filtration est appelée \textit{bonne filtration } si le gradué $\gr E$ est un $\gr\Dks$-module cohérent.
Puisque la courbe $X$ est quasi-compacte, tout $\Dks$-module cohérent $E$ admet une bonne filtration globale.
Etant donn\'e une telle bonne filtration, on associe à $E$ le $\O_{T^* X}$-module cohérent
\[ \tilde{E} = \O_{T^*X} \otimes_{\pi_X^{-1} (\gr \Dks)} \pi_X^{-1} (\gr E) .\]
La variété caractéristique de $E$ est par définition le support du $\O_{T^* X}$-module cohérent $\tilde{E}$ : $\Car E := \supp \tilde{E}$.
C'est une sous-variété fermée du fibré cotangent $T^*X$ puisque le module $\tilde{E}$ est cohérent.
Elle est de plus indépendante du choix de la bonne filtration choisie.

Soit maintenant $\E$ un $\Dkq$-module cohérent à gauche. Un \textit{modèle entier} de $\E$ est un $\Dk$-module cohérent $\Eo$ sans $\varpi$-torsion tel que $\E \simeq \Eo \otimes_\V K$. Puisque $\E$ est cohérent, il existe un modèle entier $\Eo$ d'après \cite[proposition 3.3.4]{berthelot1}.
La réduction $\Eo  \otimes_\V \kappa$ modulo $\varpi$ de $\Eo$ est un $\Dks$-module cohérent.

\begin{definition}
La variété caractéristique de $\E$ est la variété caractéristique du $\Dks$-module cohérent $\Eo  \otimes_\V \kappa$, ie $\Car (\E) := \Car (\Eo  \otimes_\V \kappa)$.
\end{definition}

On peut vérifier, voir par exemple \cite[lemme 5.2.6]{berthelotintro}, qu'il s'agit d'un sous-schéma fermé du fibré cotangent $T^*X$ de $X$ indépendant du choix du modèle entier.

\begin{example}~
On suppose la courbe formelle $\X$ affine munie d'une coordonnée étale. On note toujours $\xi_k = \sigma(\partial_k)$ l'image de la dérivation $\partial_k$ dans le gradué $\gr_1 (\Dks)$.
\begin{enumerate}
\item
Puisque le support du faisceau $\Dks$ est $X$ tout entier, on a $\Car (\Dkq) = T^* X$.
\item
La variété caractéristique du module nul $\E = 0$ est vide.
\item
Soit $\E = \Dkq / P$ avec $P \in \Dkq(\X)$ un opérateur différentiel de norme un. D'après \cite[corollaire 4.2.2]{garnier}, $\Eo = \Dk / P$ est sans $\varpi$-torsion puisque $P$ est une base de division\footnote{Voir \cite[partie 4]{garnier} ou \cite[partie 2.4]{hallopeau1} pour la d\'efinition d'une base de division.}  de l'ideal $\Dkq \cdot P$.
Ainsi, $\Eo$ est un modèle entier de $\E$.
On note $d = \Nb(P)$ et $b$ le coefficient d'indice $d$ de $P$. La réduction $\bar{P}$ de $P$ modulo $\varpi$ est un opérateur de $\Dks(X)$ d'ordre $d$, de coefficient dominant $\bar{b} = (b \mod \varpi) \in \O_{X}(X)$.
Alors $\gr (\Eo \otimes_\V \kappa )= \gr (\Dks) / (\sigma(\bar{P}))$, où $\sigma(\bar{P}) = \bar{b} \cdot \xi_k^d $ est le symbole principal de $\bar{P}$. 
La variété caractéristique de $\E$ est donc donnée par l'équation $\Car(\E) = \{ (x, \xi) \in T^*X : \sigma(\bar{P})(x,\xi) = \bar{b}(x) \cdot \xi^d = 0 \}$.
\end{enumerate}
\end{example}

Tout $\Dkq$-module cohérent $\M_k$ vérifie l'inégalité de Bernstein démontrée dans \cite[proposition 3.13]{hallopeau1} : si le module $\M_k$ est non nul, alors les composantes irréductibles de sa variété caractéristique $\Car(\M_k)$ sont toutes de dimension au moins un.

\begin{definition}\label{def2.5}
Un $\Dkq$-module cohérent $\M_k$ est dit holonome si $\dim(\Car(\M_k)) \leq 1$.
\end{definition}

La proposition suivante rassemblant \cite[proposition 3.22, lemme 3.23, corollaire 3.24, proposition 3.28]{hallopeau1} caractérise les $\Dkq$-modules holonomes.

\begin{prop}\label{prop3.22}
Un $\Dkq$-module cohérent $\E$ est holonome si et seulement $\E$ vérifie l'une des assertions équivalentes suivantes :
\begin{enumerate}

\item
$\E$ est localement de la forme $\Dkq / \I$ pour un idéal cohérent $\I \neq 0$  ;
\item
$\E$ est de longueur finie ;
\item
$\E$ est localement de torsion ;
\item
$\Ext_{\Dkq}^d(\E , \Dkq) = 0$ pour tout entier $d \neq 1$ ;
\item
il existe un ouvert non vide $U$ de $\X$ tel que $\E_{|U}$ soit un $\O_{\X , \Q |U}$-module libre de rang fini. Autrement dit, $\E_{|U}$ est une connexion intégrable.
\end{enumerate}
De plus, un $\Dkq$-module coh\'erent est une connexion intégrable si et seulement si sa vari\'et\'e caract\'eristique est vide ou la section nulle de $T^*X$.
\end{prop}

\subsection{$\Di$-modules coadmissibles}\label{section2.2}

Pour tout niveau de congruence $k$ et pour tout ouvert affine $U$ sur lequel on dispose d'une coordonnée locale, on rappelle que $\widehat{\mathcal{D}}^{(0)}_{\X, k + 1 , \Q}(U)$ est une sous-algèbre de $\Dkq(U)$.
On considère les morphismes de transition $\widehat{\mathcal{D}}^{(0)}_{\X, k+1 , \Q} \to \Dkq$ induits par ces inclusions locales.
On définit $\Di$ comme la limite projective sur $k$ des faisceaux $\Dkq$ :
\[ \Di := \varprojlim_k \Dkq . \]
C'est un faisceau de $K$-algèbres sur $\X$ tel que $\Di(U)$ soit une $K$-algèbre de Fréchet-Stein dont la topologie est induite par les normes $| \cdot |_k$ des algèbres de Banach $\Dkq(U)$. De plus,
\[\Di(U)  = \bigcap_{k \geq 0} \Dkq (U) = \displaystyle \left\{ \sum_{n = 0}^\infty a_n \cdot \partial^n :  \,  a_n \in \O_{\X, \Q} (U)~~\text{tq} ~~ \forall \eta>0,~  \lim_{n \to \infty} |a_n| \cdot \eta^n =0 \right\}. \]
Le résultat suivant, démontré dans \cite[lemme 4.2]{hallopeau1}, caractérise les opérateurs différentiels finis de l'algèbre $\Di(U)$ à l'aide des fonctions $\Nb$ pour tout niveau de congruence $k$.
On en déduit les éléments inversibles de l'algèbre $\Di(U)$.

\begin{lemma}\label{lemmedeg}
Soit $P$ un opérateur différentiel de $\Di(U)$. La suite $(\Nb(P))_{k \geq 0}$ est croissante.
De plus, $P$ est un opérateur fini de degré $d \in \N$ si et seulement si la suite $(\Nb(P))_k$ est stationnaire de valeur limite $d$.
\end{lemma}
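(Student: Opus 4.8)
Le plan est de ramener toute l'\'etude \`a celle d'une unique suite r\'eelle positive, ce qui efface la combinatoire des multi-indices et explique que la dimension $d$ ne joue aucun r\^ole (comme annonc\'e dans \cite{hallopeau}). Posons $\lambda := |\varpi| \in {]0,1[}$ et, pour un op\'erateur non nul $P = \sum_{\alpha \in \N^d} a_\alpha \partial^\alpha$ de $\Di(U)$, notons $c_\alpha := |a_\alpha|$ sa norme spectrale puis $M_n := \max_{|\alpha| = n} c_\alpha$. En r\'e\'ecrivant $P = \sum_\alpha (a_\alpha \varpi^{-k|\alpha|})\, \varpi^{k|\alpha|}\partial^\alpha$ dans $\Dkq(U)$ et en utilisant que $|\cdot|$ est une valuation sur $\O_{\X,\Q}(U)$, on obtient
\[ |P|_k = \max_{n \in \N} M_n\,\lambda^{-kn}, \qquad \Nb(P) = \max\{\, n \in \N : M_n\,\lambda^{-kn} = |P|_k \,\}. \]
La condition de d\'ecroissance d\'efinissant $\Di(U)$, appliqu\'ee \`a $\eta = \lambda^{-k}$, assure que $M_n\lambda^{-kn} \to 0$, donc que ce maximum est atteint et que $\Nb(P)$ est bien d\'efini (le cas $P = 0$ \'etant trivial). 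Tout se ram\`ene alors \`a l'analyse de $(M_n)_{n \geq 0}$.

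D'abord, je montrerais la croissance de $k \mapsto \Nb(P)$ par un argument d'\'echange. Notant $n_0 := \Nb(P)$ au niveau $k$ et $n_1$ le plus grand degr\'e r\'ealisant le maximum au niveau $k+1$, je supposerais par l'absurde $n_1 < n_0$. La d\'efinition de $n_0$ donne $M_{n_0}\lambda^{-kn_0} \geq M_{n_1}\lambda^{-kn_1}$, tandis que le fait que $n_0 > n_1$ ne r\'ealise pas le maximum au niveau $k+1$ fournit l'in\'egalit\'e \emph{stricte} $M_{n_1}\lambda^{-(k+1)n_1} > M_{n_0}\lambda^{-(k+1)n_0}$. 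En \'eliminant le rapport $M_{n_0}/M_{n_1}$ entre ces deux relations, on aboutit \`a $\lambda^{k(n_0 - n_1)} < \lambda^{(k+1)(n_0 - n_1)}$, ce qui contredit $\lambda \in {]0,1[}$ et $n_0 - n_1 > 0$. Le point \`a surveiller est pr\'ecis\'ement cette stricte in\'egalit\'e, qui repose sur le choix de $\Nb$ comme le \emph{plus grand} degr\'e r\'ealisant la norme.

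Pour l'\'equivalence, je raisonnerais encore sur $(M_n)$. Si $P$ est fini d'ordre $q$, alors $M_n = 0$ pour $n > q$ et $M_q > 0$ ; d\`es lors $\Nb(P) \leq q$, et pour chaque $n < q$ tel que $M_n > 0$, le rapport $M_q\lambda^{-kq}/(M_n\lambda^{-kn}) = (M_q/M_n)\,\lambda^{-k(q - n)}$ tend vers $+\infty$ lorsque $k \to \infty$. Le degr\'e $q$ domine donc strictement tous les autres pour $k$ assez grand, ce qui donne $\Nb(P) = q$ \`a partir d'un certain niveau, soit la stationnarit\'e de valeur $q$. R\'eciproquement, je proc\'ederais par contrapos\'ee : si $P$ est infini, je fixerais un entier $Q$ quelconque et choisirais $N > Q$ avec $M_N > 0$ ; le m\^eme calcul de rapports montre que le degr\'e $N$ l'emporte sur tous les degr\'es $\leq Q$ pour $k$ assez grand, donc $\Nb(P) > Q$, et finalement $\Nb(P) \to \infty$. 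Comme, par ailleurs, un op\'erateur fini d'ordre $q' \neq q$ donne une suite stationnaire de limite $q' \neq q$ par le sens direct, l'hypoth\`ese de stationnarit\'e en $q$ force $P$ \`a \^etre fini d'ordre $q$.

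Le point technique central n'est donc pas la combinatoire en dimension $d$, mais le soin \`a apporter \`a l'argument d'\'echange de la croissance et l'uniformit\'e, lorsque $k \to \infty$, des comparaisons de rapports sur l'ensemble \emph{fini} des degr\'es concern\'es. Une fois \'etablie la r\'eduction \`a la suite $(M_n)$, ces deux arguments sont \'el\'ementaires et ne d\'ependent plus de $d$, conform\'ement \`a la remarque de \cite{hallopeau} selon laquelle la preuve du cas $d = 1$ s'adapte imm\'ediatement.
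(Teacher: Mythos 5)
Your proof is correct. Note that the paper does not actually write out a proof of this lemma: it cites the one-dimensional case from \cite{hallopeau} and asserts that the argument adapts immediately to higher dimension, so there is no internal proof to compare against line by line. Your reduction to the scalar sequence $M_n := \max_{|\alpha| = n} |a_\alpha|$, with $\lambda = |\varpi|$, $|P|_k = \max_n M_n \lambda^{-kn}$ and $\Nb(P) = \max\{ n \in \N : M_n \lambda^{-kn} = |P|_k \}$, is exactly what makes that assertion transparent, and both halves of your argument are sound: the exchange argument for monotonicity correctly exploits the strict inequality available because $\Nb$ is the \emph{largest} maximizing degree, and the ratio estimates (applied to the finitely many competing degrees below a fixed one) give stationarity at $q$ for finite operators and $\Nb(P) \to \infty$ for infinite ones, which closes the equivalence by contraposition. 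It is also worth observing that your ratio comparisons are the elementary counterpart of the Newton-polygon formalism the paper develops later for closely related statements (Proposition \ref{prop3.7} and Corollaire \ref{cor3.9}): two distinct degrees simultaneously realizing $|P|_\mu$ means precisely that $\mu$ is a slope of the Newton polygon of $P$, and your claim that some degree $N > Q$ eventually beats all degrees $\leq Q$ is the divergence of the slopes of an infinite operator. So, insofar as the intended proof can be reconstructed, your route is the same mechanism, phrased in elementary terms rather than geometric ones, and it has the merit of being self-contained within the paper.
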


\begin{cor}
Les éléments inversibles de la $K$-algèbre $\Di(U)$ sont exactement les fonctions inversibles : $\Di(U)^\times = \O_{\X,\Q}(U)^\times$.
\end{cor}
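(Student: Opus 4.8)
The plan is to establish the two inclusions separately, the non-trivial content lying entirely in $\Di(U)^\times \subseteq \O_{\X,\Q}(U)^\times$. The reverse inclusion $\O_{\X,\Q}(U)^\times \subseteq \Di(U)^\times$ is immediate: a function $f$ sits in every $\Dkq(U)$ as a degree-$0$ operator, hence in $\Di(U) = \bigcap_{k \geq 0} \Dkq(U)$; and if $f$ is invertible in $\O_{\X,\Q}(U)$ then $f^{-1} \in \O_{\X,\Q}(U) \subseteq \Di(U)$, so $f \in \Di(U)^\times$.

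For the hard inclusion, first I would descend from the projective limit to each finite level. Suppose $P \in \Di(U)$ admits an inverse $Q \in \Di(U)$, so $PQ = QP = 1$. Since $\Di(U) = \bigcap_{k \geq 0} \Dkq(U)$ is a subalgebra of each $\Dkq(U)$, both $P$ and $Q$ lie in every $\Dkq(U)$ and the relations $PQ = QP = 1$ persist there; thus $P$ is invertible in $\Dkq(U)$ for every $k$. Applying Proposition \ref{corinv} when $d = 1$ (respectively Corollaire \ref{cor3.19} for arbitrary $d$), invertibility of $P$ at level $k$ forces $\Nb(P) = 0$ together with $a_0 \in \O_{\X,\Q}(U)^\times$, where $a_0$ is the constant coefficient, independent of $k$. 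Hence the sequence $(\Nb(P))_{k \geq 0}$ is identically zero.

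Then I would invoke Lemme \ref{lemmedeg}: a sequence $(\Nb(P))_k$ stationary with limit $0$ means that $P$ is a finite operator of order $q = 0$, that is, $P = a_0 \in \O_{\X,\Q}(U)$. Equivalently and more directly, $\Nb(P) = 0$ at level $k$ says that only the index $\alpha = 0$ attains the maximal norm, i.e. $|a_\alpha| \cdot |\varpi|^{-k|\alpha|} < |a_0|$ for every $\alpha \neq 0$; since $|\varpi|^{-k|\alpha|} \to +\infty$ as $k \to \infty$, this forces $a_\alpha = 0$, so again $P \in \O_{\X,\Q}(U)$. Combined with $a_0 \in \O_{\X,\Q}(U)^\times$ obtained above, we get $P = a_0 \in \O_{\X,\Q}(U)^\times$, which finishes the proof.

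I do not expect a genuine obstacle: the statement is a clean corollary, all the real work having been front-loaded into Proposition \ref{corinv}/Corollaire \ref{cor3.19} and Lemme \ref{lemmedeg}. The single point deserving care is the order of quantifiers, namely that one must exploit invertibility at \emph{every} level $k$ simultaneously in order to annihilate all the higher-order coefficients; this is precisely what the passage $k \to \infty$ (or, equivalently, the ``stationary at $0$'' clause of Lemme \ref{lemmedeg}) delivers.
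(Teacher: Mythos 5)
Votre démonstration est correcte et suit essentiellement la même voie que l'article : celui-ci ne rédige pas de preuve explicite mais renvoie exactement aux mêmes ingrédients (la proposition \ref{corinv} pour $d=1$, le corollaire \ref{cor3.19} en dimension quelconque, et le lemme \ref{lemmedeg} via la stationnarité de $(\Nb(P))_k$ en $0$), que vous combinez de la même manière en descendant l'inversibilité de $\Di(U)$ à chaque niveau $k$. Votre argument direct supplémentaire (faire tendre $k \to \infty$ dans $|a_\alpha| \cdot |\varpi|^{-k|\alpha|} < |a_0|$ pour annuler les coefficients d'ordre supérieur) est également valide et rend le recours au lemme \ref{lemmedeg} optionnel.
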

\begin{proof}
Soit $P \in \Di(U)^\times = \bigcap_{k \geq 0} \Dkq(U)^\times$.
Alors $\Nb(P) =0$ pour tout nouveau de congruence $k$ et le coefficient constant de $P$ inversible d'après la proposition \ref{prop2.2.3}.
Le lemme précédent implique que $P$ est un opérateur fini d'ordre 0.
Autrement dit, $P$ est un élément inversible de $\O_{\X,\Q}(U)$.
\end{proof}

On termine cette partie par la définition des $\Di$-modules coadmissibles suivie d'un exemple intéressant motivant la construction du microlocalisé de $\Di$ et la définition de la variété caractéristique pour les $\Di$-modules coadmissibles faite dans la section \ref{section5}.

\begin{definition}
Un $\Di$-module $\M$ est dit coadmissible s'il est isomorphe à une limite projective $\varprojlim_k \M_k$ de $\Dkq$-modules cohérents $\M_k$ telle que les applications de transitions $\M_{k+1} \to \M_k$ soient $\widehat{\mathcal{D}}^{(0)}_{\X, k+1 , \Q}$-linéaires et induisent des isomorphismes de $\Dkq$-modules $ \Dkq \otimes_{\widehat{\mathcal{D}}^{(0)}_{\X, k+1 , \Q}} \M_{k+1} \simeq \M_k$ pour chaque niveau de congruence $k$.
\end{definition}

La catégorie des $\Di$-modules coadmissibles est abélienne et contient les $\Di$-modules cohérents.
La notion de coadmissibilité remplace la cohérence pour les schémas classiques.

\begin{example}\label{ex4.2}
Soit $P = \prod_{n \geq 1} (1 - \varpi^n \partial) \in \Di(U)$.
Le coefficient de $\partial^n$ est de la forme $\varpi^\frac{n(n+1)}{2} \cdot a_n$ avec $a_n$ un élément de $\V$ de valeur absolue un.
Ainsi, le coefficient d'ordre $n$ de $P$ dans $\Dkq(U)$ est $\varpi^{n \left(\frac{n+1}{2}-k \right)} \cdot a_n$.
Par définition, $\Nb(P)$ est le plus grand entier $n$ maximisant la valeur absolue $|\varpi|^{n \left(\frac{n+1}{2}-k \right)}$. On obtient $\Nb(P) = k$.
Dans $\Dkq(U)$, $P$ s'écrit $P = P_k \cdot Q_k$ avec $P_k = \prod_{ 1 \leq n \leq k} (1 - \varpi^n \partial)$ un opérateur fini d'ordre $\Nb(P_k) = k$ et $Q_k = \prod_{n > k} (1 - \varpi^n \partial)$ inversible dans $\Dkq(U)$ (puisque $\Nb(Q_k) = 0$ et son coefficient constant est inversible).
On en déduit que $\Dkq / P \simeq \Dkq / P_k$ est une connexion intégrable de rang $k$ et que $\Di / P \simeq \varprojlim_k \Dkq / P_k$ en tant que $\Di$-module coadmissible.
Par ailleurs, il est possible de retrouver $P$ à partir des opérateurs $P_k$. En effet, la suite $(P_k)_k$ converge vers $P$ dans la $K$-algèbre de Fréchet-Stein $\Di(U)$.
Le $\Di$-module coadmissible $\Di /P$ n'a pas de bonnes propriétés de finitude puisque pour tout niveau de congruence $k$, le $\Dkq$-module cohérent $\Dkq / P_k$ est une connection intégrable de rang $k$.
On souhaite donc définir une notion d'holonomie pour les $\Di$-modules coadmissibles telle que le module $\Di / P$ soit holonome si et seulement $P$ est un opérateur différentiel fini.
Cela motive la construction du microlocalisé $\Fi$ de $\Di$ faite dans les sections \ref{section3} et \ref{section4} ; ce dernier ne rend inversibles que les opérateurs différentiels finis.
\end{example}

\subsection{$\Di$-modules faiblement holonomes}\label{section2.3}

On définit dans cette partie les $\Di$-modules coadmissibles faiblement holonomes au sens de Ardakov-Bode-Wadsley dans l'article \cite{ABW2}.
Pour cela, on introduit dans le lemme suivant les $\Di$-modules \`a droite $\Ext^d_{\Di}(\M , \Di)$ d'un $\Di$-module coadmissible $\M$.

\begin{lemma}\label{lemme4.3.1}
Soit $\M = \varprojlim_k \M_k$ un $\Di$-module coadmissible.
Pour tout niveau de congruence $k \in \N$, on dispose d'un isomorphisme de $\Dkq$-modules à droite :
\[ \Hom_{\widehat{\mathcal{D}}^{(0)}_{\X, k+1, \Q}}(\M_{k+1} , \widehat{\mathcal{D}}^{(0)}_{\X, k+1, \Q}) \otimes_{\widehat{\mathcal{D}}^{(0)}_{\X, k+1, \Q}} \Dkq \simeq \Hom_{\Dkq}(\M_k , \Dkq) . \]
Par ailleurs, $\Ext^d_{\Di}(\M , \Di) := \varprojlim_k \Ext^d_{\Dkq}(\M_k , \Dkq)$ définit pour tout $d \in \N$ un $\Di$-module coadmissible a droite.
De plus, $\Ext^d_{\Di}(\M , \Di) = 0$ d\'es que $d \geq 2$.
\end{lemma}
\begin{proof}
On commence par \'etablir le premier point pour tout niveau de congruence $k$.
Puisque $\M_k$ est un $\Dkq$-module cohérent à gauche, on peut munir le faisceau de groupes abéliens $\Hom_{\Dkq}(\M_k , \Dkq)$ d'une structure de $\Dkq$-module cohérent à droite.
En effet, la question étant locale, on peut supposer $\X$ affine et le module $\M_k$ de présentation finie : $ (\Dkq)^m \to (\Dkq)^n \to \M_k \to 0$.
En appliquant le foncteur $\Hom_{\Dkq}(\bullet , \Dkq)$, on observe que $\Hom_{\Dkq}(\M_k , \Dkq)$ est un sous-module de type fini du $\Dkq$-module cohérent à droite $\Hom_{\Dkq}((\Dkq)^n , \Dkq) \simeq (\Dkq)^n$.
Il en découle que le $\Dkq$-module à droite $\Hom_{\Dkq}(\M_k , \Dkq)$ est cohérent.
Soit $\varphi \in \mathrm{Hom}_{\widehat{\mathcal{D}}^{(0)}_{\X, k+1, \Q}}(\M_{k+1} , \widehat{\mathcal{D}}^{(0)}_{\X, k+1, \Q})$.
On lui associe une application $\Dkq(\X)$-linéaire
\[ \tilde{\varphi} : \M_k \simeq \Dkq \otimes_{\widehat{\mathcal{D}}^{(0)}_{\X, k+1, \Q}} \M_{k+1}  \longrightarrow \Dkq, ~~ P \otimes m  \mapsto P \cdot \varphi(m) . \]
On en déduit une application entre les $\Dkq(\X)$-modules à droite
\begin{align*}
 \mathrm{Hom}_{\widehat{\mathcal{D}}^{(0)}_{\X, k+1, \Q}}(\M_{k+1} , \widehat{\mathcal{D}}^{(0)}_{\X, k+1, \Q}) \otimes_{\widehat{\mathcal{D}}^{(0)}_{\X, k+1, \Q}(\X)} \Dkq(\X) & \to \mathrm{Hom}_{\Dkq}(\M_k , \Dkq) .\\
 \varphi \otimes Q & \mapsto \tilde{\varphi} \cdot Q
\end{align*}
On dispose d'une telle application sur chaque ouvert $U$ de $\X$. On obtient ainsi un morphisme entre $\Dkq$-modules cohérents à droite
\[ \Hom_{\widehat{\mathcal{D}}^{(0)}_{\X, k+1, \Q}}(\M_{k+1} , \widehat{\mathcal{D}}^{(0)}_{\X, k+1, \Q}) \otimes_{\widehat{\mathcal{D}}^{(0)}_{\X, k+1, \Q}} \Dkq \to \Hom_{\Dkq}(\M_k , \Dkq) .\]
On montre maintenant que ce morphisme est un isomorphisme. Il suffit de le vérifier localement.
On suppose donc que $\X$ est affine afin que $\M_{k+1}$ soit de présentation finie, ie $(\widehat{\mathcal{D}}^{(0)}_{\X, k+1, \Q})^{m} \to (\widehat{\mathcal{D}}^{(0)}_{\X, k+1, \Q})^n \to \M_{k+1} \to 0$.
En tensorisant par $\Dkq$, on obtient une présentation $(\Dkq)^m \to (\Dkq)^n \to \M_k \to 0$ de $\M_k$.
En appliquant les foncteurs $\Hom_{\widehat{\mathcal{D}}^{(0)}_{\X, k+1, \Q}}( \bullet , \widehat{\mathcal{D}}^{(0)}_{\X, k+1, \Q})$ et $\Hom_{\Dkq}( \bullet , \Dkq)$, on obtient le diagramme commutatif suivant entre $\Dkq$-modules à droites :
\[ \xymatrix@C=35pt{
0  \ar[d] & 0 \ar[d] & \\
\Hom_{\widehat{\mathcal{D}}^{(0)}_{\X, k+1, \Q}} \left(\M_{k+1} , \widehat{\mathcal{D}}^{(0)}_{\X, k+1, \Q} \right) \otimes \Dkq \ar[r] \ar[d] &  \Hom_{\Dkq} \left( \M_k , \Dkq \right) \ar[d] & \\
\Hom_{\widehat{\mathcal{D}}^{(0)}_{\X, k+1, \Q}} \left( (\widehat{\mathcal{D}}^{(0)}_{\X, k+1, \Q})^{n} , \widehat{\mathcal{D}}^{(0)}_{\X, k+1, \Q} \right) \otimes \Dkq \ar[r]^-{\simeq} \ar[d] &  \Hom_{\Dkq} \left( (\Dkq)^{n} , \Dkq \right) \ar[d] & \\
\Hom_{\widehat{\mathcal{D}}^{(0)}_{\X, k+1, \Q}} \left( (\widehat{\mathcal{D}}^{(0)}_{\X, k+1, \Q})^{m} , \widehat{\mathcal{D}}^{(0)}_{\X, k+1, \Q} \right) \otimes \Dkq \ar[r]^-{\simeq}  &   \Hom_{\Dkq} \left( (\Dkq)^{m} , \Dkq \right) 
} \]
On déduit du fait que les deux dernières flèches horizontales soient des isomorphismes que la première flèche en est aussi un.
On définit alors le $\Di$-module coadmissible à droite
\[ \Ext^0_{\Di}(\M , \Di) := \varprojlim_k \Hom_{\Dkq}(\M_k , \Dkq) . \]
Les faisceaux dérivés $\Ext^d_{\Dkq}(\M_k , \Dkq)$ de $\Hom_{\Dkq}(\M_k , \Dkq)$ admettent une structure de $\Dkq$-module à droite pour laquelle ils sont cohérents.
Pour tout entier $d \geq 2$ et pour tout niveau de congruence $k$, il a été vu dans la preuve de \cite[proposition 3.28]{hallopeau1} que $\Ext^d_{\Dkq}(\M_k , \Dkq) = 0$.
Ainsi, pour $d \geq 2$, on pose simplement
\[ \Ext^d_{\Di}(\M , \Di) := \varprojlim_k \Ext^d_{\Dkq}(\M_k , \Dkq) = 0 . \]
Il reste \`a traiter le cas $d =  1$.
Pour tout niveau de congruence $k \in \N$, on peut démontrer que le morphisme de $\Dkq$-modules à droites
\[ \Ext^1_{\widehat{\mathcal{D}}^{(0)}_{\X, k+1, \Q}}(\M_{k+1} , \widehat{\mathcal{D}}^{(0)}_{\X, k+1, \Q}) \otimes_{\widehat{\mathcal{D}}^{(0)}_{\X, k+1, \Q}} \Dkq \to \Ext^1_{\Dkq}(\M_k , \Dkq) \]
induit par l'application $\Hom_{\widehat{\mathcal{D}}^{(0)}_{\X, k+1, \Q}}(\M_{k+1} , \widehat{\mathcal{D}}^{(0)}_{\X, k+1, \Q}) \to \Hom_{\Dkq}(\M_k , \Dkq)$ est un isomorphisme.
En effet, on peut le vérifier localement où l'on dispose d'une résolution projective de $\M_{k+1}$ dont les premiers termes sont des $\widehat{\mathcal{D}}^{(0)}_{\X, k+1, \Q}$-modules libres de rang fini.
Cela fournit (par tensorisation par $\Dkq$) une résolution de $\M_k$ dont les premiers termes sont des $\Dkq$-modules libres de mêmes rangs.
Le $\Di$-module à droite $\Ext^1_{\Di}(\M , \Di) := \varprojlim_k \Ext^1_{\Dkq}(\M_k , \Dkq)$ est donc coadmissible.
\end{proof}

\begin{definition}\label{def4.2.3}
Un $\Di$-module coadmissible $\M = \varprojlim_k \M_k$ est appelé faiblement holonome si $\Ext^0_{\Di}(\M , \Di) = 0$.
De manière équivalente, un $\Di$-module coadmissible $\M$ est faiblement holonome si et seulement si $\Ext^d_{\Di}(\M , \Di) = 0$ pour tout entier $d \neq 1$.
\end{definition}

La catégorie des modules faiblement holonomes est non triviale. Elle contient par exemple les $\Di$-modules cohérents de la forme $\Di / \I$ pour tout idéal cohérent $\I$ non nul de $\Di$.
Cette définition coincide avec celle de Ardakov-Bode-Wadsley dans \cite{ABW2}.
En effet, ils définissent les modules faiblement holonomes de la manière suivante.
Un $\wideparen{\D}$-module coadmissible $\M$ sur un $K$-espace analytique rigide lisse $X_K$ est faiblement holonome si les groupes de cohomologie $\mathrm{Ext}^d_{\wideparen{\D}}(\M , \wideparen{\D})$ sont nuls pour tout entier $ d \neq \dim X_K$.

\begin{prop}\label{prop4.2.4}
Un $\Di$-module coadmissible $\M = \varprojlim_k \M_k$ est faiblement holonome si et seulement si les $\Dkq$-modules cohérents $\M_k$ sont tous holonomes\footnote{Voir la d\'efinition \ref{def2.5} pour la notion de $\Dkq$-module holonone.}.
\end{prop}
\begin{proof}
Si les $\M_k$ sont tous holonomes, alors $\Ext^0_{\Dkq}(\M_k , \Dkq) = 0$ d'après la proposition \ref{prop3.22}.
Il en découle que $\Ext^0_{\Di}(\M , \Di) = \varprojlim_k \Ext^0_{\Dkq}(\M_k , \Dkq) = 0$.
Réciproquement, on suppose que $\Ext^0_{\Di}(\M , \Di) = 0$. Puisque ce module est coadmissible, on a $\Ext^0_{\Dkq}(\M_k , \Dkq) = \Ext^0_{\Di}(\M , \Di) \otimes_{\Di} \Dkq = 0$.
On en déduit que $\M_k$ est holonome toujours d'après la proposition \ref{prop3.22}.
\end{proof}

\begin{cor}
La catégorie des modules faiblement holonomes est abélienne.
\end{cor}
\begin{proof}
Soit $0 \to \M \to \Nn \to \L \to 0$ une suite exacte de $\Di$-modules coadmissibles avec $\M = \varprojlim_k \M_k$, $\Nn = \varprojlim_k \Nn_k$ et $\L = \varprojlim_k \L_k$.
Pour tout niveau de congruence $k$, on dispose d'une suite exacte de $\Dkq$-modules cohérents $0 \to \M_k \to \Nn_k \to \L_k \to 0$.
D'après \cite[proposition 3.16]{hallopeau1}, le module $\Nn_k$ est holonome si et seulement si $\M_k$ et $\L_k$ le sont.
La proposition \ref{prop4.2.4} implique alors que $\Nn$ est faiblement holonome si et seulement $\M$ et $\L$ sont faiblement holonomes.
\end{proof}

\begin{remark}
Lorsque $\M = \varprojlim_k \M_k$ est faiblement holonome, on dispose localement d'une résolution libre finie des modules $\M_k$ donnée par \cite[proposition 2.20]{hallopeau1}.
En effet, le module $\M_k$ est holonome d'après \ref{prop4.2.4}. Il existe donc un idéal cohérent non nul $\I_k$ de $\Dkq$ tel que $\M_k \simeq \Dkq / \I_k$.
Soit $x \in \X$. Une base de division\footnote{Voir \cite[partie 2.4]{hallopeau1} pour la d\'efinition d'une base de division.} relativement au point $x$ fournit une présentation explicite de $\I_k$ sur un voisinage $U$ de $x$ :
\[ \xymatrix{ 0 \ar[r] & (\widehat{\mathcal{D}}^{(0)}_{U, k , \Q})^{r-1} \ar[r]^{\cdot R} & (\widehat{\mathcal{D}}^{(0)}_{U, k , \Q})^r \ar[r] & \I_{k|U} \ar[r] & 0 } . \]
La matrice $R$ s'exprime entièrement en fonction de la base de division.
Combinée avec la suite exacte courte $ 0 \to \I \to \Dkq \to \M_k \to 0$, on obtient la présentation suivante du module $\M_k$ sur l'ouvert $U$ :
\[ \xymatrix{ 0 \ar[r] & (\widehat{\mathcal{D}}^{(0)}_{U, k , \Q})^{r-1} \ar[r]^{\cdot R} & (\widehat{\mathcal{D}}^{(0)}_{U, k , \Q})^r \ar[r] & \widehat{\mathcal{D}}^{(0)}_{U, k , \Q} \ar[r] & \M_{k |U} \ar[r] & 0 } . \]
On retrouve bien que $\Ext^d_{\Dkq}(\M_k , \Dkq) = 0$ pour tout entier $d \geq 2$.
Par ailleurs, cette résolution permet de calculer explicitement le groupe $\Ext^1_{\Dkq}(\M_k , \Dkq)$ au voisinage de $x$ en fonction d'une base de division relativement au point $x$.
\end{remark}

Soit $\M  = \varprojlim_k \M_k$ un $\Di$-module coadmissible et $d$ un entier naturel.
Les $\Dkq$-modules cohérents à gauche $\Ext^d_{\Dkq}(\M_k , \Dkq) \otimes_{\O_{\X , \Q}} \omega_{\X , \Q}^{-1}$ forment un $\Di$-module coadmissible  à gauche.
En effet, cela résulte de l'article \cite[lemme 2.2, partie I]{virrion} de Virrion :
\begin{align*}
\Dkq \otimes_{\widehat{\mathcal{D}}^{(0)}_{\X, k+1, \Q}} & \left( \Ext^1_{\widehat{\mathcal{D}}^{(0)}_{\X, k+1, \Q}}(\M_{k+1} , \widehat{\mathcal{D}}^{(0)}_{\X, k+1, \Q}) \otimes_{\O_{\X , \Q}} \omega_{\X , \Q}^{-1} \right)  \\
& \simeq \Ext^1_{\widehat{\mathcal{D}}^{(0)}_{\X, k+1, \Q}}(\M_{k+1} , \widehat{\mathcal{D}}^{(0)}_{\X, k+1, \Q}) \otimes_{ \widehat{\mathcal{D}}^{(0)}_{\X, k+1, \Q}} \left( \Dkq  \otimes_{\O_{\X , \Q}} \omega_{\X , \Q}^{-1} \right) \\
& \simeq \left( \Ext^1_{\widehat{\mathcal{D}}^{(0)}_{\X, k+1, \Q}}(\M_{k+1} , \widehat{\mathcal{D}}^{(0)}_{\X, k+1, \Q}) \otimes_{ \widehat{\mathcal{D}}^{(0)}_{\X, k+1, \Q}} \Dkq \right) \otimes_{\O_{\X , \Q}} \omega_{\X , \Q}^{-1}  \\
& \simeq \Ext^1_{\Dkq}(\M_k , \Dkq) \otimes_{\O_{\X , \Q}} \omega_{\X , \Q}^{-1} .
\end{align*}
Le dual du module cohérent $\M_k$ est défini par $\M_k^\vee := \Ext^1_{\Dkq}(\M_k , \Dkq) \otimes_{\O_{\X , \Q}} \omega_{\X , \Q}^{-1}$.
Il a été vu dans \cite[partie 3.5]{hallopeau1} que si le module $\M_k$ est holonome, alors son dual $\M_k^\vee$ reste holonome et nous disposons d'un isomorphisme naturel de bidualité $(\M_k^\vee)^\vee \simeq \M_k$.

\begin{definition}
Le dual d'un $\Di$-module coadmissible $\M  = \varprojlim_k \M_k$ est le module coadmissible $\M^\vee := \varprojlim_k \M_k^\vee = \varprojlim_k \left( \Ext^1_{\Dkq}(\M_k , \Dkq) \otimes_{\O_{\X , \Q}} \omega_{\X , \Q}^{-1} \right)$.
\end{definition}

\begin{prop}
Soit $\M  = \varprojlim_k \M_k$ un $\Di$-module coadmissible faiblement holonome.
Alors son dual $\M^\vee$ est aussi faiblement holonome.
De plus, on dispose d'un isomorphisme de bidualité : $(\M^\vee)^\vee \simeq \M$.
\end{prop}
\begin{proof}
D'après la proposition \ref{prop4.2.4}, les $\Dkq$-modules cohérents $\M_k$ sont tous holonomes.
Alors $\M_k^\vee = \Ext^1_{\Dkq}(\M_k , \Dkq) \otimes_{\O_{\X , \Q}} \omega_{\X , \Q}^{-1}$ est holonome d'après la proposition \ref{prop3.22}.
Ainsi, le $\Di$-module coadmissible $\M^\vee = \varprojlim_k \M_k^\vee$ est faiblement holonome toujours d'après la proposition \ref{prop4.2.4}.
Enfin, les isomorphismes naturels de bidualité $(\M_k^\vee)^\vee \simeq \M_k$ pour chaque niveau de congruence induisent un isormorphisme $(\M^\vee)^\vee \simeq \M$ en prenant la limite projective.
\end{proof}

\section{Microlocalisations de $\Dkq$}\label{section3}

On introduit dans cette section des microlocalisés $\F_{k , r}$ du faisceau $\Dkq$ en utilisant une localisation non commutative de l'annexe de l'article \cite{zab} de Zábrádi.
Pour cela, il faut partir d'une algèbre quasi-abélienne, ce qui est le cas des algèbres de Banach $(\Dkq(U) , | \cdot |_k)$ dès que $k \geq 1$.
On localise ensuite la partie multiplicative des puissances de la dérivation $\partial$ dans $\Dkq(U)$.
Une difficulté consiste alors a trouver des microlocalisés des algèbres $\Dkq(U)$ admettant des morphismes de transition commutant avec les morphismes injectifs $\widehat{\mathcal{D}}^{(0)}_{\X , k+1 , \Q}(U) \hookrightarrow \Dkq(U)$.
Intervient ainsi un paramètre $r \in \{ 1 , \dots , k \}$ permettant d'obtenir des microlocalisés $\F_{k , r}$ poss\'edant des morphismes de transition $\F_{k+1 , r} \to \F_{k , r}$ \'etendant $\widehat{\mathcal{D}}^{(0)}_{\X , k+1 , \Q} \to \Dkq$.
En passant à la limite projective sur $k$, on obtiendra dans la section \ref{section4} des faisceaux $\Fir = \varprojlim_{k \geq r} \F_{k , r}$ microlocalisés de $\Di$. Localement, $\Fir(U)$ sera une $K$-algèbre de Fréchet-Stein et l'union des alg\`ebres $\Fir(U)$ pour $r \geq 1$ aura la condition d'inversibilit\'e souhait\'ee discut\'ee en introduction.

\subsection{Microlocalisation d'une algèbre de Banach quasi-abélienne}\label{section3.1}

Cette partie résume les principales propriétés du microlocalisé d'une algèbre de Banach quasi-abélienne construit dans l'annexe écrite par Peter Schneider de l'article \cite{zab}.
On démontre ensuite que l'algèbre $(\Dkq(U) , | \cdot |_k)$ est quasi-abélienne dès que $U$ est un ouvert affine sur lequel on dispose d'une coordonnée étale et dès que $k \geq 1$.
On se donne une $K$-algèbre de Banach $A$ munie d'une norme multiplicative $| \cdot |$ non archimédienne.

\begin{definition}
On dit que l'algèbre $A$ est quasi-abélienne pour la norme $|\cdot|$ s'il existe un réel $\gamma \in [0,1[$ tel que pour tout couple $(a, b) \in A^2$, $| ab -ba | = | [a,b] | \leq \gamma \cdot |ab| = \gamma \cdot  |a| |b|$.
\end{definition}

On fixe $m$ normes $| \cdot |_1 , \dots,  | \cdot |_m$ quasi-abéliennes sur $A$.
Soit $S$ une partie multiplicative de $A$. Il existe alors une unique $K$-algèbre de Banach $B = A \langle | \cdot |_1 , \dots,  | \cdot |_m, S \rangle $ munie d'une norme sous-multiplicative $ \| \cdot \|$ non archimédienne et d'un morphisme isométrique de $K$-algèbres $\phi : (A , \max\{| \cdot |_1 , \dots,  | \cdot |_m \}) \to (B , \| \cdot \|)$
tels que $\phi(S) \subset B^\times$ et tels que les éléments de B de la forme $s^{-1} a$ pour $(s,a) \in S \times A$ soient denses dans $B$.
On identifie $A$ à une sous-algèbre de $B$ via $\phi$: on note encore $a$ l'image d'un élément $a$ de $A$ et $s^{-1}$ l'inverse de $s\in S$ dans $B$.
Par ailleurs, le morphisme $\phi : A \to B$ vérifie la propriété universelle suivante.

\begin{prop}\cite[proposition A.18]{zab}\label{prop1.2}
Soit $(D , \|\cdot \|_D)$ une $K$-algèbre de Banach munie d'un morphisme de $K$-algèbres $f : A \to D$ tel que $f(S) \subset D^\times$. On suppose qu'il existe une constante $c > 0$ pour laquelle
\[ \forall (s,a) \in S \times A, ~~ \| f(s)^{-1} \cdot f(a) \|_D \leq c \cdot \max_{1 \leq i \leq m} \left\{ |s|_i^{-1} \cdot |a|_i \right\} . \]
Il existe alors un unique morphisme continu de $K$-algèbres $\tilde{f} : B \to D$ tel que $f = \tilde{f} \circ \phi$.
De plus, si $c = 1$ et si la norme $\| \cdot \|_D$ de $D$ est sous-multiplicative, alors l'application $\tilde{f}$ est 1-lipschitzienne : $\forall b \in B, ~~ \| \tilde{f} (b) \|_D \leq \| b\|$.
\end{prop}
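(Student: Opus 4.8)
Le plan est de suivre l'argument classique de prolongement depuis une partie dense : d\'efinir $\tilde f$ sur les fractions simples, la majorer, puis la prolonger par continuit\'e gr\^ace \`a la compl\'etude de $D$. Commen\c cons par l'unicit\'e, qui impose d\'ej\`a la forme de $\tilde f$. Si $\tilde f$ est un morphisme continu de $K$-alg\`ebres avec $\tilde f \circ \phi = f$, alors pour tout $s \in S$ l'\'el\'ement $\phi(s)$ est inversible dans $B$ et son image $\tilde f(\phi(s)) = f(s)$ est inversible dans $D$, d'o\`u $\tilde f(\phi(s)^{-1}) = f(s)^{-1}$ ; par multiplicativit\'e, $\tilde f(s^{-1} a) = f(s)^{-1} f(a)$ pour tout $(s,a) \in S \times A$. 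Comme ces \'el\'ements sont denses dans $B$ et que $\tilde f$ est continue, elle est enti\`erement d\'etermin\'ee, ce qui donne l'unicit\'e et, simultan\'ement, le seul candidat possible pour l'existence.

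Pour construire ce candidat, je partirais de la localisation abstraite $A_S$ de $A$ inversant universellement la partie multiplicative $S$ (qui existe sans condition de Ore). Le morphisme $f : A \to D$ v\'erifiant $f(S) \subset D^\times$ se prolonge alors de mani\`ere unique en un morphisme d'anneaux $f_* : A_S \to D$ avec $f_*(s^{-1} a) = f(s)^{-1} f(a)$, tandis que $\phi$ induit un morphisme canonique $\psi : A_S \to B$ d'image dense. Il suffit donc d'\'etablir que $f_*$ se factorise continument \`a travers $\psi$, c'est-\`a-dire de prouver l'estimation
\[ \forall w \in A_S, \qquad \| f_*(w) \|_D \leq c \cdot \| \psi(w) \|_B . \]
Cette in\'egalit\'e fournit \`a elle seule la descente ($\psi(w) = 0 \Rightarrow f_*(w) = 0$, donc $\ker \psi \subset \ker f_*$) et la continuit\'e n\'ecessaires, apr\`es quoi le prolongement est automatique.

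Le c\oe ur de la preuve, et le principal obstacle, est pr\'ecis\'ement cette estimation, que j'\'etablirais \`a l'aide de la description de la norme de $B$ issue de l'annexe de \cite{zab} : $\| \psi(w) \|_B$ s'exprime comme l'infimum des quantit\'es $\max_n \max_{1 \leq i \leq m} |s_n|_i^{-1} |a_n|_i$ sur les \'ecritures de $\psi(w)$ comme s\'erie convergente de fractions simples $\sum_n s_n^{-1} a_n$. La difficult\'e vient de l'absence des conditions de Ore : un \'el\'ement de $A_S$ n'est pas une fraction simple mais une somme finie de mots $a_0 s_1^{-1} a_1 \cdots s_\ell^{-1} a_\ell$, que l'on ne peut mettre sous la forme $\sum_n s_n^{-1} a_n$ qu'au prix de s\'eries infinies. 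Le point technique est donc de transf\'erer \`a $D$, via $f$, les identit\'es de redressement des commutateurs $[a, s^{-1}]$ qui permettent ce r\'earrangement dans $B$ : le caract\`ere quasi-ab\'elien des normes $| \cdot |_i$ (la constante $\gamma \in {]0,1[}$) assure la convergence de ces s\'eries, et l'hypoth\`ese $\| f(s)^{-1} f(a) \|_D \leq c \cdot \max_i |s|_i^{-1} |a|_i$ garantit que les s\'eries images convergent dans $D$ et que leur somme vaut bien $f_*(w)$. L'in\'egalit\'e ultram\'etrique jointe \`a la sous-multiplicativit\'e de $\| \cdot \|_D$ donne alors, pour chaque \'ecriture, la majoration par $c \cdot \max_n \max_i |s_n|_i^{-1} |a_n|_i$, et le passage \`a l'infimum fournit l'estimation voulue.

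Il ne reste plus qu'\`a conclure. L'estimation montre que $f_*$ passe au quotient puis s'\'etend, par densit\'e de $\psi(A_S)$ et compl\'etude de $D$, en une unique application continue $\tilde f : B \to D$ de constante de Lipschitz $c$. Le fait que $\tilde f$ soit un morphisme de $K$-alg\`ebres se v\'erifie sur la partie dense $\psi(A_S)$, o\`u c'est imm\'ediat, puis se propage par continuit\'e de l'addition et de la multiplication dans $B$ et dans $D$. Enfin, lorsque $c = 1$ et que $\| \cdot \|_D$ est sous-multiplicative, l'estimation se lit directement $\| \tilde f(b) \|_D \leq \| b \|$ pour tout $b \in B$, c'est-\`a-dire le caract\`ere $1$-lipschitzien annonc\'e.
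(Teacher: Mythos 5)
Votre squelette co\"{\i}ncide avec celui du papier : l'unicit\'e s'obtient exactement comme dans la remarque qui suit la proposition (les valeurs $\tilde f(s^{-1}a) = f(s)^{-1} f(a)$ sont forc\'ees par multiplicativit\'e, puis la densit\'e des fractions simples conclut), et l'existence proc\`ede par prolongement continu d'une partie dense de $B$ vers l'alg\`ebre compl\`ete $D$. La mise en \oe uvre de l'existence diff\`ere en revanche nettement. Le papier, suivant l'annexe de Schneider dans \cite{zab}, ne quitte jamais $B$ : le lemme \ref{lemme1.5} calcule exactement $\| s^{-1}a \|$ et $\| s^{-1}a - t^{-1}b \|$, si bien que l'hypoth\`ese sur $f$ se lit directement comme le caract\`ere lipschitzien de $(s,a) \mapsto f(s)^{-1}f(a)$ sur la partie dense des fractions simples ; par exemple, lorsque $S$ est commutative, $f(s)^{-1}f(a) - f(t)^{-1}f(b) = f(ts)^{-1} f(ta - sb)$, et l'hypoth\`ese appliqu\'ee au couple $(ts , ta - sb)$ majore cette diff\'erence par $c \cdot \| s^{-1}a - t^{-1}b \|$. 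Le prolongement, sa multiplicativit\'e et l'unicit\'e suivent alors par densit\'e. Vous repassez au contraire par la localisation universelle $A_S$ et ramenez tout \`a l'estimation $\| f_*(w) \|_D \leq c \cdot \| \psi(w) \|_B$, ce qui vous oblige \`a refaire \og \`a la main \fg, dans $D$, le redressement des mots en s\'eries de fractions simples, travail d\'ej\`a encapsul\'e dans la construction m\^eme de $B$.

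C'est pr\'ecis\'ement l\`a que votre proposition comporte une lacune, au point que vous d\'esignez vous-m\^eme comme le principal obstacle. Vous affirmez que l'hypoth\`ese \og garantit que les s\'eries images convergent dans $D$ et que leur somme vaut bien $f_*(w)$ \fg. La convergence est bien acquise : les termes images sont des fractions simples, de normes major\'ees par $c \cdot \| s_n^{-1} a_n \|_B \to 0$, et $D$ est complet. Mais l'\'egalit\'e de la somme avec $f_*(w)$ ne d\'ecoule pas de l'hypoth\`ese. Aux \'etapes finies du redressement, on a dans $A_S$ une identit\'e $w = w_N + r_N$, o\`u $w_N$ est une somme de fractions simples et o\`u le reste $r_N$ est une somme de mots \`a plusieurs inverses, du type $s^{-(N+1)} c_{N+1} s^{-1}$ avec $c_{N+1}$ un commutateur it\'er\'e ; il faut d\'emontrer que $f_*(r_N) \to 0$ dans $D$. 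Or l'hypoth\`ese ne contr\^ole que les images des fractions simples $f(s)^{-1} f(a)$ : pour majorer $f_*(r_N)$, il faut d\'etacher le facteur $f(s)^{-1}$ de droite, ce qui requiert la continuit\'e de la multiplication de $D$ (une majoration $\| xy \|_D \leq C \cdot \|x\|_D \cdot \|y\|_D$), ingr\'edient qui ne figure pas dans l'hypoth\`ese affich\'ee --- l'\'enonc\'e ne suppose la sous-multiplicativit\'e de $\| \cdot \|_D$ que dans sa derni\`ere clause --- et que vous n'invoquez qu'\`a la toute fin, pour la multiplicativit\'e de $\tilde f$. Avec cet ingr\'edient et une induction sur le nombre d'inverses des mots, votre plan peut \^etre men\'e \`a bien ; en l'\'etat, l'estimation centrale qui porte tout votre argument est annonc\'ee plut\^ot que d\'emontr\'ee.
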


On se donne une constante $\gamma \in [0 , 1[$ telle que pour tous $a, b \in A$ et pour tout $i \in \{ 1 ,\dots , m\}$, $| ab -ba |_i  \leq \gamma \cdot |ab|_i$.

\begin{prop}\cite[proposition A.21]{zab}\label{prop1.4}
Pour tout $(e_1 , \dots , e_n) \in B^n$ et pour toute permutation $\sigma \in S_n$, on a $\| e_1\dots e_n - e_{\sigma(1)} \dots e_{\sigma(n)} \| \leq \gamma \| e_1 \dots e_n \|$.
En particulier, l'algèbre $B$ est quasi-abélienne de constante $\gamma$ et $\|ab\| = \|ba\|$ pour tout $a , b \in B$.
Par ailleurs, dans le cas ou $B$ est le localisé de $A$ pour une unique norme, la norme de $B$ est multiplicative.
\end{prop}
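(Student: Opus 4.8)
For all $(e_1,\dots,e_n) \in B^n$ and all $\sigma \in S_n$,
$$\| e_1\dots e_n - e_{\sigma(1)}\dots e_{\sigma(n)}\| \leq \gamma \|e_1\dots e_n\|,$$
hence $B$ is quasi-abelian with constant $\gamma$ and $\|ab\| = \|ba\|$; moreover when $B$ is the localization for a single norm, its norm is multiplicative.

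Let me think about how to prove this.

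**The setup.** We have a Banach algebra $A$ with $m$ quasi-abelian norms $|\cdot|_1,\dots,|\cdot|_m$, all satisfying $|ab-ba|_i \le \gamma|ab|_i$ with a common constant $\gamma \in [0,1)$. The localization $B = A\langle |\cdot|_1,\dots,|\cdot|_m, S\rangle$ has a submultiplicative norm $\|\cdot\|$, contains $A$ isometrically (for the max norm), and the elements $s^{-1}a$ with $(s,a)\in S\times A$ are dense in $B$.

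**First, what quasi-abelian means for $B$.** The claim "$B$ is quasi-abelian with constant $\gamma$" is the statement for $n=2$ and $\sigma$ the transposition: $\|e_1e_2 - e_2e_1\| \le \gamma\|e_1e_2\|$. But wait — the definition of quasi-abelian in the excerpt requires a *multiplicative* norm, whereas $\|\cdot\|$ is only submultiplicative. So here "quasi-abelian with constant $\gamma$" must be understood as the commutator bound alone. I'll prove the commutator bound and the symmetry $\|ab\|=\|ba\|$.

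**Step 1 — reduce the permutation statement to adjacent transpositions.** Any $\sigma\in S_n$ is a product of adjacent transpositions, and the difference $e_1\cdots e_n - e_{\sigma(1)}\cdots e_{\sigma(n)}$ telescopes over such a factorization. Each adjacent swap replaces a subword $\cdots e_i e_{i+1}\cdots$ by $\cdots e_{i+1}e_i\cdots$, and the difference of two such words is $u[e_i,e_{i+1}]v$ for suitable products $u,v$. So the whole thing reduces to bounding a single commutator sandwiched between products, and summing. I expect to need the uniform bound for $n=2$ first and then bootstrap; I'd prove the $n=2$ case carefully and deduce the general case by induction on the word length / number of transpositions, being careful that the right-hand side stays $\gamma\|e_1\cdots e_n\|$ and does not accumulate factors of $\gamma^{-1}$ or extra constants.

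**Step 2 — the core $n=2$ estimate by density and continuity.** This is where the real work is. On a dense subset I may take $e_1 = s^{-1}a$, $e_2 = t^{-1}b$ with $s,t\in S$, $a,b\in A$. The plan is to compute $[e_1,e_2] = e_1e_2 - e_2e_1$ and push all the $s^{-1},t^{-1}$ to the outside, producing genuine commutators $[a,b]$, $[s,b]$, $[a,t]$, $[s,t]$ in $A$ to which the hypothesis $|\cdot|_i \le \gamma\,|\cdot|_i$ applies. The map $\phi$ being isometric for the max norm lets me control $\|a\|,\|s^{-1}\|$ etc. by the $|\cdot|_i$, and the submultiplicativity of $\|\cdot\|$ lets me reassemble. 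The hard part will be the algebraic manipulation of moving inverses past elements of $A$: one uses identities like $s^{-1}b = (b s^{-1}) + s^{-1}[s,b]s^{-1}$ (from $sb - bs = [s,b]$, i.e. $s^{-1}b - bs^{-1} = -s^{-1}[s,b]s^{-1}$), and each such rearrangement introduces one commutator, hence one factor of $\gamma$, while the leftover terms recombine into $\gamma\|e_1e_2\|$. I anticipate this is the genuinely delicate estimate, and the cleanest route is probably to invoke the universal property (Proposition \ref{prop1.2}) indirectly, or to work with the explicit description of $\|\cdot\|$ as an infimum over representations; I would verify the commutator inequality on the dense set and then extend by continuity of multiplication and of the norm on $B$.

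**Step 3 — consequences.** From $\|ab-ba\| \le \gamma\|ab\| \le \gamma\|a\|\|b\|$ and $\gamma<1$, I get $\|ba\| \le \max(\|ab\|,\|ab-ba\|) \le \|ab\|$ by the ultrametric inequality; by symmetry $\|ab\|=\|ba\|$. Finally, for the single-norm case I must upgrade submultiplicativity to genuine multiplicativity, $\|ab\|=\|a\|\|b\|$. Here I expect to use that for one norm the localization inherits the multiplicativity of $|\cdot|$ on $A$ together with the explicit normal form of elements $s^{-1}a$; the multiplicativity of the original norm on $A$ (valid since $A=\Dkq(U)$ has multiplicative $|\cdot|_k$ by Proposition \ref{prop2.2.3}) should propagate to $B$ via the density of $s^{-1}a$ and the quasi-abelian commutator control, with $\gamma<1$ preventing any loss. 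The main obstacle throughout is Step 2: correctly tracking that each reordering costs exactly one factor of $\gamma$ and no more, so that the estimate does not degrade when passing from $A$ to its localization.
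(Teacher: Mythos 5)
The paper itself contains no proof of this proposition: it is recalled, without argument, from the appendix (by P.~Schneider) of \cite{zab}, as announced at the start of section \ref{section3.1}. So your attempt can only be judged on its own merits, and as it stands it is a plan rather than a proof, with its two load-bearing steps failing as described. First, your reduction to adjacent transpositions (Step 1) is blocked by the very feature that makes the statement nontrivial: the norm of $B$ is only sub-multiplicative. Telescoping produces terms $u\,[e_i,e_{i+1}]\,v$, which you bound by $\|u\|\cdot\|[e_i,e_{i+1}]\|\cdot\|v\| \leq \gamma\,\|u\|\,\|e_i e_{i+1}\|\,\|v\|$; but sub-multiplicativity only gives $\|u\,e_i e_{i+1}\,v\| \leq \|u\|\,\|e_i e_{i+1}\|\,\|v\|$, which is the wrong direction, so these terms cannot be compared with $\gamma\,\|e_1\cdots e_n\|$. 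Converting the bound would require multiplicativity of $\|\cdot\|$ --- precisely what is false in the multi-norm case, and what is only a conclusion (not a hypothesis) in the single-norm case. This is why the statement must be established for arbitrary $n$ and arbitrary $\sigma$ directly, working with the exact norm formulas on the dense set of elements $s^{-1}a$ (as in Lemma \ref{lemme1.5}), rather than bootstrapped from $n=2$.

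Second, the core estimate (Step 2) is exactly the part you defer (``I anticipate this is the genuinely delicate estimate''), hedging between the universal property and an unspecified description of $\|\cdot\|$ as an infimum, without executing either. Moreover, the route you sketch --- pushing $s^{-1}$, $t^{-1}$ outward via $s^{-1}b - b s^{-1} = -s^{-1}[s,b]s^{-1}$ and then applying sub-multiplicativity together with Lemma \ref{lemme1.5} --- loses the constant $\gamma$ as soon as $m \geq 2$: it yields bounds of the shape $\|s^{-1}[s,b]s^{-1}\| \leq \max_i\{|s|_i^{-1}\} \cdot \gamma \max_i\{|s|_i |b|_i\} \cdot \max_i\{|s|_i^{-1}\}$, and the maxima taken over different indices $i$ do not cancel; they leave spurious factors such as $\max_i\{|s|_i\} \cdot \max_j\{|s|_j^{-1}\} \geq 1$, so one does not recover the claimed bound $\gamma\,\|s^{-1}a\,t^{-1}b\|$. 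Only Step 3 is sound as written: granting the commutator bound, $\|ba\| \leq \max\{\|ab\|, \|ab-ba\|\} \leq \|ab\|$ and by symmetry $\|ab\| = \|ba\|$; but the single-norm multiplicativity is again merely asserted. In short, the substance of the proposition --- a uniform constant $\gamma$, for all $n$ and all permutations, for a merely sub-multiplicative norm --- is left unproved, and the reductions you propose would not deliver it.
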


Le lemme suivant correspondant \`a \cite[lemme A.7]{zab} relie la norme des éléments de $B$ de la forme $s^{-1} a$ avec les normes $|\cdot |_i$ de $s$ et de $a$.

\begin{lemma}\label{lemme1.5}
Pour $a \in A$ et $s \in S$, on a $ \displaystyle \| s^{-1} a \| = \max_{ 1 \leq i \leq m} \{ |s|_i^{-1} \cdot |a|_i \} $.
\end{lemma}

On désigne toujours par $U$ un ouvert affine sur lequel on dispose d'une coordonnée locale.
On rappelle que la norme $| \cdot |_k$ et la fonction $\Nb$ ne dépendent pas du choix de la coordonnée locale sur $U$.

\begin{prop}\label{propqa}
Pour tout entier $k\geq 1$, l'algèbre de Banach $(\Dkq(U) , |\cdot|_k)$ est quasi-abélienne de constante optimale $|\varpi|^k = p^{-k}$.
\end{prop}
\begin{proof}
On doit démontrer que pour tous les opérateurs différentiels $P$ et $Q$ de $\Dkq(U)$, $|PQ -QP|_k \leq \frac{1}{p^k} \cdot |P|_k \cdot |Q|_k$.
On peut se ramener au cas o\`u $P = a \cdot (\varpi^k \partial)^n$.
En effet, si $P = \sum_{n \in \N} P_n$ avec $|P|_k = \max_n \{ |P_n| \}$, alors $[ P , Q] = \sum_n [ P_n , Q]$ et $| [ P , Q] |_k \leq \max \{ |[ P_n , Q]|_k \}$.
On peut aussi supposer que $Q = b \cdot (\varpi^k \partial)^m$.
On a
\[ a\cdot (\varpi^k\partial)^n \cdot b \cdot (\varpi^k\partial)^m= ab\cdot (\varpi^k \cdot \partial)^{n+m} + \varpi^k \cdot \underbrace{\sum_{\ell = 0}^n \varpi^{k (n- \ell - 1)} {n\choose \ell} a \partial^{n - \ell}(b) \cdot (\varpi^k\partial)^{\ell+m}}_\alpha \]
avec $|\alpha|_k \leq |a|_k \cdot |b|_k$.
De m\^eme, $ b \cdot (\varpi^k\partial)^m \cdot a\cdot (\varpi^k\partial)^n = ab\cdot (\varpi^k \cdot \partial)^{n+m} + \varpi^k \cdot \beta$ avec $|\beta|_k \leq |a|_k \cdot |b|_k$.
On a donc $[P , Q] = \varpi^k\cdot (\alpha - \beta)$ et $|[P , Q]|_k \leq |\varpi^k| \cdot |a|_k \cdot |b|_k$.
\end{proof}

\begin{remark}
L'algèbre $\widehat{\mathcal{D}}^{(0)}_{\X, \Q}(U) = \widehat{\mathcal{D}}^{(0)}_{\X, 0, \Q}(U)$ n'est pas quasi-abélienne puisque par exemple $[\partial , t] = 1$.
Pour $k>0$, on a $[\varpi^k \partial , t] = \varpi^k$ avec $|\varpi^k\partial |_k = 1 = |t|$ et $|\varpi^k| < 1$.
\end{remark}

\subsection{Un premier microlocalisé $\E_k$}\label{section3.2}

On suppose à partir de maintenant, et dans la suite de cet article, que le niveau de congruence $k$ est supérieur ou égal à un.
Ainsi, la $K$-algèbre $(\Dku , | \cdot |_k)$ est toujours quasi-abélienne. On peut donc la localiser pour n'importe quelle partie multiplicative $S$.
On choisit concrètement les puissance de la dérivation : $S := \{ \partial^n, ~~ n\in\N\}$.
Il revient au même de prendre comme partie multiplicative les puissances de $\varpi^k\partial$ puisque $\varpi$ est inversible dans $\Dku$.
Dans cette partie, on localise l'algèbre $\Dku$ pour la seule norme $| \cdot |_k$.
Le microlocalisé $\Ek$ ainsi obtenu possède de très bonnes propriétés.
En particulier, on peut déterminer facilement ses éléments inversibles.
Cependant, il n'existe pas de morphismes de transition pertinents $\E_{k+1}(U) \to \E_k(U)$ commutant avec $\widehat{\mathcal{D}}^{(0)}_{\X, k+1}(U) \hookrightarrow \Dk(U)$.
On ne peut donc pas obtenir à partir des localisés $\E_k(U)$ un microlocalisé au niveau de l'algèbre $\Di(U) = \varprojlim_k \Dkq(U)$.
On introduira dans la partie suivante les microlocalisés plus compliqués $\Fkr$ (coïncidant avec $\Ek$ lorsque $k =r$) admettant des morphismes de transition.
Néanmoins, on obtient facilement une injection naturelle $\Fkr \hookrightarrow \Ek$ continue.
Ainsi, bien connaitre les algèbres $\Ek$ permet de déduire des propriétés des microlocalisés $\Fkr$.

\begin{definition}
L'algèbre $\Ek := \Dku \langle |\cdot|_k , S \rangle$ est définie comme le microlocalisé de $\Dku$ donné par la partie multiplicative $S$ et la seule norme $| \cdot |_k$.
On note $\phi_k : \Dku \hookrightarrow \Ek$ le morphisme de $K$-algèbres associé.
\end{definition}

Le microlocalisé $\Ek$ est une $K$-algèbre de Banach et le morphisme $\phi_k : \Dku \hookrightarrow \Ek$ est une isométrie. De plus, puisqu'on a localisé par rapport à la seule norme $| \cdot |_k$ de $\Dkq(U)$, la norme de $\Ek$ est multiplicative.
On identifie donc $\Dku$ à une sous-algèbre de $\Ek$ via le morphisme $\phi_k$ et l'on peut noter encore $| \cdot |_k$ la norme de $\Ek$ sans ambiguïté.
On a $|\partial|_k = |\varpi|^{-k} = p^{-k}$. Le lemme \ref{lemme1.5} nous dit que $|\partial^{-1}|_k = |\varpi|^k = p^k$.
De manière équivalente, $|\varpi^k\partial|_k = |(\varpi^k\partial)^{-1}|_k =1$.

\begin{prop}\label{prop2.7}
Tout élément $S$ de $\Ek$ s'écrit uniquement sous la forme d'une série $S = \sum_{n \in \Z} a_n \cdot (\varpi^k\partial)^n$ avec $a_n \in \O_{\X , \Q}(U)$ tels que $a_n \to 0$ lorsque $n \to \pm \infty$. De plus, $|S|_k = \max_{n\in\Z} \{ |a_n| \}$.
\end{prop}
\begin{proof}
Commençons par démontrer que l'algèbre $\Ek$ contient les séries de cette forme. Soit $(a_n)_{n \in \Z}$ une suite d'éléments de $\O_{\X , \Q}(U)$ vérifiant $a_n \to 0$ lorsque $n \to \pm \infty$.
On pose $S_m = \sum_{n\geq -m}  a_n \cdot (\varpi^k\partial)^n$ pour $m\in \N$.
Puisque l'algèbre $\Ek$ contient $\Dku$ et les puissances de $\partial^{-1}$, les séries $S_m$ appartiennent au localisé $\Ek$.
Pour tout entier $\ell \geq 0$, on a $S_{m+\ell} - S_m = \sum_{-(m+\ell) \leq n < -m} a_n \cdot (\varpi^k\partial)^n$ et

\[ |S_{m+\ell} - S_m|_k \leq  \max_{-(m+\ell) \leq n < -m} \left\{ |a_n \cdot (\varpi^k\partial)^n|_k \right\} = \max_{-(m+\ell) \leq n < -m} \left\{ |a_n| \right\} .\]
Puisque la suite $a_n$ converge vers 0 lorsque $n \to -\infty$, la suite $(S_m)_m$ est de Cauchy dans l'algèbre de Banach $\Ek$.
Elle converge donc vers un élément de $\Ek$ que l'on note $S = \sum_{n \in \Z} a_n \cdot (\varpi^k\partial)^n$.
Par ailleurs, on a $|S_m|_k = \max_{n\geq -m} |a_n|$. En effet, $S_m \cdot (\varpi^k \partial)^m = \sum_{n=0}^\infty a_{n-m} (\varpi^k\partial)^n$.
Puisque la norme $|\cdot|_k$ est multiplicative et comme $|\varpi^k\partial|_k =1$, on obtient $|S_m|_k = |S_m \cdot (\varpi^k \partial)^m |_k = \max_{n\in \N} |a_{n-m}| = \max_{n\geq -m} |a_n|$.
On en déduit en passant à la limite $m \to \infty$ que $|S|_k = \max_{n\in\Z} |a_n|$.
En particulier, $S = 0$ si et seulement si tous ses coefficients $a_n$ sont nuls. Ainsi, une telle écriture est unique.

On démontre maintenant que $\Ek$ coïncide avec l'ensemble de ces séries. On note $F$ le sous-ensemble de $\Ek$ constitué des éléments de la forme $S = \sum_{n \in \Z} a_n \cdot (\varpi^k\partial)^n$ avec $a_n \to 0$ pour $n \to \pm \infty$.
On va vérifier que $F$ contient les éléments $(\varpi^k \partial)^n \cdot P$ pour $n\in \Z$ et $P \in \Dku$ et que $F$ est fermé. Puisque l'ensemble des termes de la forme $(\varpi^k \partial)^n \cdot P$ est dense dans $\Ek$, on aura bien $\Ek = F$.
Pour le premier point, il suffit de prouver que si $f$ est un élément de $\O_{\X,\Q}(U)$, alors $\partial^{-1} f \in F$. On a $[\partial , f ] = \partial f - f \partial  = \partial(f)$, soit $\partial f = \partial(f) + f \partial$.
Il vient $ \partial^{-1} \partial f = f = \partial^{-1} \partial(f) + \partial^{-1} f \partial$.
Ainsi, $\partial^{-1} f = f \partial^{-1} - \partial^{-1} \partial(f) \partial^{-1}$. En réitérant ce calcule pour $\partial^{-1} \partial(f)$ et en modifiant ce terme dans l'expression de $\partial^{-1} f$, on obtient $\partial^{-1} \partial^n(f) = \partial^n(f) \partial^{-1} - \partial^{-1} \partial^{n+1}(f) \partial^{-1}$.
Une récurrence montre alors que pour tout entier $n \in \N^*$,
\[ \partial^{-1} f = \sum_{j=0}^{n-1} (-1)^j \partial^j(f) \partial^{-(j+1)} + (-1)^n \partial^{-1} \partial^n(f) \partial^{-n} .\]
On a
\[ | \partial^j(f) \partial^{-(j+1)} |_k = |\partial^j(f)| \cdot |\partial^{-(j+1)}|_k \leq |f| \cdot p^{-k(j+1)} \underset{j\to \infty}{\longrightarrow} 0 ,\]
\[ |\partial^{-1} \partial^n(f) \partial^{-n}|_k = |\partial^{-1}|_k \cdot |\partial^n(f)| \cdot |\partial^{-n}|_k \leq |f| \cdot p^{-k(n+1)} \underset{n\to \infty}{\longrightarrow} 0 .\]
La somme partielle définissant $\partial^{-1} f$ converge donc lorsque $n \to \infty$. On en déduit que
\[ \partial^{-1} f = \sum_{n=0}^{+\infty} (-1)^n \partial^n(f) \cdot \partial^{-(n+1)} \in F .\]
Enfin, puisque la norme d'un élément $S = \sum_{n \in \Z} a_n \cdot (\varpi^k\partial)^n$ de $F$ est $|S|_k = \max_{n\in \Z} |a_n|$, toute suite de Cauchy d'éléments $S_m = \sum_{n \in \Z} a_{n,m} \cdot (\varpi^k\partial)^n$ de $F$ converge aussi vers un élément de $F$.
En effet, le fait que la suite $(S_m)$ soit de Cauchy implique que pour tout entier $n\in\Z$, la suite $(a_{n,m})_m$ est de Cauchy. Elle converge donc vers un élément $\alpha_n \in \O_{\X , \Q}(U)$.
La suite $(S_m)_m$ converge alors vers $S =  \sum_{n \in \Z} \alpha_n \cdot (\varpi^k\partial)^n$. Cela prouve que $F$ est fermé.
Il en résulte que $\Ek =F$.
\end{proof}

\begin{remark}
La preuve montre que pour toute fonction $f \in \O_{\X,\Q}(U)$,
\[ \partial^{-1} f = \sum_{n=0}^{+\infty} (-1)^n \partial^n(f) \cdot \partial^{-(n+1)} \in \Ek . \]
\end{remark}

\begin{prop}\label{propnoeth}
La $K$-algèbre $\Ek$ est noetherienne à gauche et à droite.
De plus, le morphisme d'algèbres $\phi_k : \Dku \hookrightarrow \Ek$ est plat à gauche et à droite.
\end{prop}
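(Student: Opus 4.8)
The plan is to pass to the associated graded ring $\gr \Ek$ for the filtration defined by the norm $|\cdot|_k$ and to exploit the completeness of $\Ek$. For $\gamma > 0$ set $\Fil_\gamma = \{ S \in \Ek : |S|_k \leq \gamma \}$ and $\Fil_\gamma^- = \{ S \in \Ek : |S|_k < \gamma \}$, so that $\gr \Ek = \bigoplus_\gamma \Fil_\gamma / \Fil_\gamma^-$. Since $(\Ek , |\cdot|_k)$ is a Banach algebra, this filtration is separated and complete, and I would invoke the standard principle: a complete separated filtered ring whose associated graded is left (resp. right) Noetherian is itself left (resp. right) Noetherian, completeness allowing one to lift a generating family of a graded ideal to a generating family of the ideal by successive approximation. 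Everything then reduces to computing $\gr \Ek$ and checking that it is Noetherian.

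The crucial point is that $\gr \Ek$ is \emph{commutative}. Indeed, proposition \ref{prop1.4} gives a constant $\gamma_0 = p^{-k} < 1$ with $|ST - TS|_k \leq \gamma_0 \cdot |S|_k \cdot |T|_k$ for all $S, T \in \Ek$. Hence $[S,T]$ has norm strictly smaller than $|S|_k |T|_k = |ST|_k$, so its principal symbol vanishes in degree $|S|_k |T|_k$: the symbols of $S$ and $T$ commute in $\gr \Ek$. Writing $\sigma(S)$ for the principal symbol and $R := \gr \O_{\X , \Q}(U)$ for the graded ring of the affinoid algebra of coefficients, the description of proposition \ref{prop2.7} shows that the symbol of $S = \sum_{\alpha \in \Z^d} a_\alpha \cdot \varpi^{k \lfloor \alpha \rfloor} \partial^\alpha$ is the \emph{finite} sum of the $\sigma(a_\alpha) \cdot \xi^\alpha$ over the indices $\alpha$ realising the maximum $|a_\alpha| = |S|_k$, where $\xi_i := \sigma(\varpi^k \partial_i)$. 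Since $|\varpi^k \partial_i|_k = |(\varpi^k \partial_i)^{-1}|_k = 1$, each $\xi_i$ is an invertible element of degree $1$ in $\gr \Ek$, and one obtains an isomorphism of graded rings $\gr \Ek \simeq R[\xi_1^{\pm 1} , \dots , \xi_d^{\pm 1}]$, the ring of Laurent polynomials in the $\xi_i$ over $R$.

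The ring $R = \gr \O_{\X , \Q}(U)$ is Noetherian, because $\O_{\X , \Q}(U)$ is a $K$-affinoid algebra: its graded ring is a quotient of the graded ring of a Tate algebra, which is itself a polynomial algebra over the graded field $\gr K$, hence Noetherian. The Hilbert basis theorem together with the stability of Noetherianity under localisation then shows that $R[\xi_1^{\pm 1} , \dots , \xi_d^{\pm 1}]$ is Noetherian, and the lifting principle recalled above yields that $\Ek$ is left Noetherian; the left--right symmetry of the whole construction (the norm satisfies $|ST|_k = |TS|_k$ and $\gr \Ek$ is commutative) gives right Noetherianity in the same way.

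For the flatness of $\phi_k$, I would compute in the same manner $\gr \Dku \simeq R[\xi_1 , \dots , \xi_d]$ (now a finite sum over $\alpha \in \N^d$) and observe that $\gr \phi_k$ is identified with the localisation of this commutative ring at the multiplicative set generated by the $\xi_i$, namely $R[\xi_1 , \dots , \xi_d] \to R[\xi_1^{\pm 1} , \dots , \xi_d^{\pm 1}]$. A localisation of commutative rings being flat, $\gr \phi_k$ is flat, and the classical flatness criterion for complete filtered morphisms (if $\gr u$ is flat then $u$ is flat) lets one conclude that $\phi_k$ is left and right flat. The main obstacle will be to check carefully the hypotheses of the two lifting principles (Noetherianity and flatness) in the setting of quasi-abelian Banach algebras and to ensure that the norm filtration is compatible with these statements; by contrast, the identification of the graded ring as a commutative Laurent polynomial ring, via the quasi-abelian property, is the relatively direct conceptual heart of the proof.
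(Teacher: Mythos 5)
Your proposal follows essentially the same route as the paper's proof: filter $\Ek$ by the norm $|\cdot|_k$, use the quasi-abelian property to identify $\gr \Ek$ with the commutative Laurent polynomial ring $\gr(\O_{\X , \Q}(U))[\zeta_{1,k}^{\pm 1}, \dots , \zeta_{d,k}^{\pm 1}]$, deduce Noetherianity of $\Ek$ from the lifting theorem for complete filtered rings, and obtain flatness of $\phi_k$ because $\gr \Dku \hookrightarrow \gr \Ek$ is a localisation of commutative rings (the paper cites Propositions 1.1 and 1.2 of \cite{schneider} for these two lifting principles). The only hypothesis-checking you defer is handled explicitly in the paper: it reduces to a $\Z$-indexed filtration by invoking the fact (Garnier) that the spectral norm takes values in $p^{\frac{1}{s}\Z}$, so that the cited results on filtered rings apply verbatim.
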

\begin{proof}
Un point donn\'e $x$ de $\X$ est $\kappa'$-rationnel pour une extension finie $\kappa'$ de $\kappa$ de degré $s \in \N^*$.
La norme spectrale de l'algèbre $O_{\X , \Q}(U)$ relative au point $x$ est à valeur dans $p^{\frac{1}{s} \cdot\Z}$ (corollaire 1.5.4 de l'article \cite{garnier} de Garnier).
Les valeurs prises par cette derni\`ere étant indexées par $\Z$, nous pouvons supposer que $x$ est $\kappa$-rationnel et que $|\O_{\X , \Q}(U)| = p^\Z$.
On munit l'algèbre $\O_{\X , \Q}(U)$ de la filtration croissante exhaustive fournie par la norme spectrale : $O_{\X , Q}(U)(m) = \left\{ f \in \O_{\X , \Q}(U) : |f| \leq p^m \right\} $ pour $m \in \Z$.
On note $\gr(\O_{\X , \Q}(U))$ le gradué associé.
Pour $f$ une section de $\O_{\X , \Q}(U)$ telle que $|f| = p^m$, soit $\gamma(f) = \gamma_m(f)$ l'image de $f$ dans le gradué $\gr_m(\O_{\X , Q}(U)) := \O_{\X , Q}(U)(m) / \O_{\X , Q}(U)(m-1)$.
On note $\gamma : \O_{\X , \Q}(U) \to \gr(\O_{\X , \Q}(U))$ l'application induite et $\gamma : K \to \gr K$ la restriction à $K$.
On dote ainsi le gradué $\gr \O_{\X , Q}(U)$ d'une structure de $\gr K$-algèbre.
Puisque l'algèbre $O_{\X , \Q}(U)$ est noetherienne, le gradu\'e $\gr(\O_{\X , \Q}(U))$ est noetherien.
Soit $P = \sum_{n \in \Z} a_n \cdot ( \varpi^k \partial)^n$ un élément non nul de $\Ek$. Sa norme $|P|_k = \max_{n \in \Z} \{ |a_n| \}$ appartient à $p^\Z$.
On munit l'algèbre $\Ek$ de la filtration donnée par la norme : $\Ek(m) = \{ P \in \E_k(U) : |P|_k \leq p^m \}$ pour $ m \in \Z$.
C'est une filtration croissante indexée par $\Z$ vérifiant
\[ \forall m , m' \in \Z, ~~ \Ek(m) \cdot \Ek(m') \subset \Ek(m + m') . \]
On note $\gr_m \Ek := \Ek(m) / \Ek(m-1)$ et $\gr \Ek := \bigoplus_{m\in \Z} \gr_m \Ek$ le gradué associé.
Soit $\zeta_k$ l'image de $\varpi^k \partial$ dans $\gr_0 \Ek$. Pour tout $\lambda \in \Z$, $\Ek(m)$ est isomorphe à $\Ek(m + \lambda)$ via la multiplication par $\varpi^\lambda$.
On en déduit que $gr_m \Ek \simeq \gr_{m+ \lambda} \Ek$.
On peut donc identifier $\gamma(\varpi^\lambda) \cdot \zeta_k$ avec l'image de $\varpi^\lambda \cdot (\varpi^k \partial)$ dans le gradué $\gr_\lambda \Ek$.
On a $|P|_k = p^m$ pour un certain entier $m \in \Z$. L'image de $P$ dans le gradué $\gr_m \Ek$ est donc l'élément
\[ \gamma(P) = \sum_{ \nb(P) \leq n \leq \Nb(P)} \gamma_m(a_n) \cdot \zeta_k^n\]
où $\nb(P)$ le plus petit entier naturel $\ell$ pour lequel $|P|_k = |a_\ell |$.
On définit ainsi une application $\gamma : \Ek \to \gr \Ek$ dont l'image engendr\'e le gradué en tant que $\gr K$-algèbre.
Puisque l'algèbre $\Ek$ est quasi-abélienne, le gradué est commutatif.
On en déduit que
\[ \gr \Ek \simeq \gr(\O_{\X , \Q}(U)) [ \zeta_k , \zeta_k^{-1}] \]
en tant que $\gr K$-algèbres. En particulier, $\gr \Ek$ est une $\gr K$-algèbre noetherienne.
La proposition 1.1 de l'article \cite{schneider} implique alors que l'algèbre $\Ek$ est noetherienne à gauche et à droite.
De même, on munit l'algèbre $\Dku$ de la filtration donnée par la norme. L'inclusion $\Dku \hookrightarrow \Ek$ induit une inclusion entre les gradués $\gr \Dku \hookrightarrow \gr \Ek$.
On a $\gr \Dku \simeq \gr(\O_{\X , \Q}(U)) [ \zeta_k ]$ en tant que $\gr K$-algèbres. Le morphisme de $\gr K$-algèbres $\gr \Dku \hookrightarrow \gr \Ek$ est donc plat.
D'après l'article \cite[proposition 1.2]{schneider}, on obtient que le morphisme $\Dku \hookrightarrow \Ek$ est plat à gauche et à droite.
\end{proof}

\subsubsection*{Elements inversibles de $\Ek$}

On rappelle que l'algèbre $\Dk(U)$ est l'ensemble des \'el\'ements de $\Dku$ de norme $| \cdot |_k$ inférieure ou égale à un et que le faisceau de $\kappa$-algèbres $\Dks$ sur la fibre spéciale $X$ de $\X$ est la réduction modulo $\varpi$ de $ \Dk$.
Ce dernier est engendré localement sur $U$ par ${\O_X}_{|U}$ et par la dérivation $\partial_k$ image de $\varpi^k \partial$ après réduction modulo $\varpi$.
De plus, $\Dks(U)$ est une $\kappa$-algèbre commutative puisque $[\varpi^k \partial , t ] = \varpi^k \in \Dk(U)$ avec $k \geq 1$.
Ainsi, $\Dks(U)= \O_X(U)[ \partial_k]$ est un anneau de polynôme en la variable $\partial_k$.
En particulier, $\Dks(U)^\times = \O_X(U)^\times$.
Soit $\E_k^\circ(U)$ l'ensemble des éléments de $\Ek$ de norme $| \cdot |_k$ inférieure on égale à un. C'est une $\V$-algèbre de Banach contenant $\Dk(U)$.
On note $E_k(U) = \E_k^\circ(U)  \otimes_\V \kappa $ la réduction modulo $\varpi$ de $\E_k^\circ(U)$.
Il s'agit d'une $\kappa$-algèbre contenant $\Dks(U)$ dans laquelle la dérivation $\partial_k$ est inversible.
Puisque l'algèbre de Banach $(\Ek , | \cdot |_k)$ est quasi-abélienne, $E_k(U)$ est une $\kappa$-algèbre commutative. Il en découle que $E_k(U) = \O_X(U)[ \partial_k , \partial_k^{-1}]$.
Autrement dit, $E_k(U)$ est le localisé classique de l'algèbre de polynômes $\O_X(U)[ \partial_k]$ pour la partie multiplicative des puissances de la dérivation $\partial_k$.
On en déduit l'égalité
\[ E_k(U)^\times = \left\{ a \cdot \partial_k^n, ~~ a \in  \O_X(U)^\times ~ \mathrm{et} ~ n \in \Z \right\} . \]
Pour tout $P = \sum_{n \in \Z} a_n \cdot (\omega^k \partial)^n \in \Ek$, on pose $\Nb(P) := \max \{ n \in \Z : |a_n| = |P|_k \}$ et $\nb(P) := \min \{ n \in \Z : |a_n| = |P|_k \}$.
Ces d\'efinitions généralisent celles de $\Dku$ donn\'ees pr\'ec\'edemment.

\begin{prop}\label{prop2.11}
Un élément $P$ de $\Ek$ est inversible si et seulement si $\Nb(P) = \nb(P)$ et si son coefficient d'indice $\Nb(P)$ est inversible dans $\O_{\X , \Q}(U)$.
\end{prop}
\begin{proof}
Quitte à normaliser $P$, on peut supposer que $|P|_k = 1$. Alors $P \in \E_k^\circ(U)$. On note $\bar{P} = (P \mod \varpi)$ l'image de $P$ dans $E_k(U) = \E_k^\circ(U) \otimes_\V \kappa$.
On commence par démontrer que $P$ est inversible dans $\Ek$ si et seulement si $\bar{P}$ est inversible dans $E_k(U)$. Le sens direct est évident.
On suppose que $\bar{P}$ est inversible dans $E_k(U)$ : il existe $Q \in \E_k^\circ(U)$ tel que $\bar{P} \bar{Q} = 1$.
Ainsi, $P Q$ est de la forme $1 + \varpi R$ pour un certain élément $R \in \E_k^\circ(U)$.
Puisque $| \varpi R |_k \leq |\varpi| <1$, l'élément $1 + \varpi R$ est inversible dans $\E_k(U)$. En effet, son inverse est donné par la série classique $(1+ \varpi R)^{-1} = \sum_{i = 0}^\infty (-\varpi R)^i$.
Cette somme est convergente dans l'algèbre de Banach $\Ek$ puisque $| \varpi R |_k <1$.
On en déduit que $P \cdot Q (1 + \varpi R)^{-1} = 1$. Autrement dit, $P$ est inversible à droite dans $\E_k(U)$.
De même, $P$ est inversible à gauche puisque $\bar{Q} \bar{P} = 1$. Ainsi, $P$ est inversible dans l'algèbre $\E_k(U)$.
On rappelle que  $E_k(U)^\times = \left\{ a \cdot \partial_k^n, ~~ a \in  \O_X(U)^\times ~ \mathrm{et} ~ n \in \Z \right\}$.
L'élément $P$ est donc inversible dans le microlocalis\'e $\Ek$ si et seulement si $\bar{P}$ est de la forme $\bar{a} \cdot \partial_k^n$ avec $\bar{a} \in \O_X(U)^\times$.
La réduction modulo $\varpi$ de $P$ est un monôme si et seulement si $\Nb(P) = \nb(P)$.
Enfin, le terme $\bar{a}$ est inversible dans $\O_X(U)$ si et seulement si $a$ est inversible dans $\O_\X(U)$.
Il s'agit exactement des conditions données dans l'énoncé de la proposition.
\end{proof}

\begin{remark}
Ce critère est analogue à la condition d'inversibilité des séries de Laurent convergentes $K\langle T \rangle $ à coefficients dans un corps ultramétrique complet $K$.
Par ailleurs, il est possible de démontrer ce résultat sans réduire modulo $\varpi$ les faisceaux $\Dk(U)$ et $\E_k^\circ(U)$.
En effet, on peut raisonner de manière analogue en utilisant le gradué $\gr \Ek \simeq \gr(\O_{\X , \Q}(U)) [ \zeta_k , \zeta_k^{-1}]$ de $\E_k(U)$ pour la filtration induite par la norme $| \cdot |_k$.
\end{remark}

Soit $P = \sum_{n \in \Z} a_n \cdot (\varpi^k \partial)^n$ un élément inversible de $\E_k(U)$.
On peut exprimer explicitement son inverse en fonction des coefficients $a_n$.
On note $ d = \Nb(P)$ l'ordre de $P$. Puisque $P$ est inversible, on sait que $\Nb(P) = \nb(P)$ et que le coefficient $a_d$ de $P$ est inversible d'après la proposition \ref{prop2.11}. On a
\[ P = a_d \cdot  \underbrace{\left( \sum_{n \in \Z} \frac{a_{n+d}}{a_d} \cdot (\varpi^k \partial)^n \right) }_{Q} \cdot (\varpi^k \partial)^d .\]
Le coefficient constant de $Q$ est égal à 1. Ses autres coefficients sont de normes strictement inférieures à 1. Son inverse est comme précédemment donné par $Q^{-1} = \sum_{i \geq 0} (-(Q-1))^i$.
On obtient
\[ P^{-1} = (\varpi^k\partial)^{-d} \cdot  \sum_{i \in \N} \left(-\sum_{n \in \Z\backslash \{0\}} \frac{a_{n+d}}{a_d} \cdot (\varpi^k \partial)^n \right)^i  \cdot a_d^{-1} .\]

\subsubsection*{Pas de morphismes de transition $\E_{k+1}(U) \to \E_k(U)$}

Nous ne disposons pas de morphismes de transition intéressants $ \E_{k+1}(U) \to \E_k(U)$ prolongeant les inclusions $\widehat{\mathcal{D}}^{(0)}_{\X , k+1 , \Q} (U) \hookrightarrow \Dk(U)$.
La propriété universelle du microlocalisé ne fournit pas de tel morphisme : l'inclusion $f_k := \phi_{k  | \widehat{\mathcal{D}}^{(0)}_{\X , k+1 , \Q} (U) } : \widehat{\mathcal{D}}^{(0)}_{\X , k+1 , \Q} (U)\hookrightarrow \Ek$ ne vérifie pas la condition d'uniforme continuité de la propriété universelle \ref{prop1.2} appliquée à l'algèbre $\E_{k+1}(U)$.
En effet, cette condition s'écrit
\[ \exists c>0 ~~ \mathrm{tq} ~~  \forall n\in\N ,~~  \forall P \in \widehat{\mathcal{D}}^{(0)}_{\X , k+1 , \Q} (U),  ~~ | \partial^{-n} \cdot P |_k \leq c \cdot |\partial|_{k+1}^{-n} \cdot |P|_{k+1} . \]
On a $|\partial^{-1} |_k = |\varpi|^k = p^{-k}$ et $|\partial^{-1}|_{k+1} = p^{-k+1}$. Ainsi, pour $P=1$, il est impossible de trouver une telle constante $c$.
Le morphisme $f_k$ ne se factorise donc pas naturellement en un morphisme de $\E_{k+1}(U)$ dans $\Ek$ étendant l'inclusion de $\widehat{\mathcal{D}}^{(0)}_{\X , k+1 , \Q} (U)$ dans $\Dku$.
Pour tout $P \in \widehat{\mathcal{D}}^{(0)}_{\X , k+1 , \Q} (U)$, on a $ | P |_k \leq |P|_{k+1}$ avec égalité stricte dés que $\Nb(P) >0$.
En effet, si $P = \sum_{n \geq 0} a_n \cdot \partial^n$, alors $|P|_k = \max_{n\in \N} \left \{ |a_n| \cdot p^{kn} \right \} \leq |P|_{k+1} = \max_{n\in \N} \left\{ |a_n| \cdot p^{(k+1)n} \right \} $.
Puisque la norme de $\Ek$ est multiplicative, cette inégalité est inversée dès que l'on inverse $\partial$.
L'inclusion de $\widehat{\mathcal{D}}^{(0)}_{\X , k+1 , \Q} (U) \hookrightarrow \Dk(U)$ est inversée dès que l'on considère des opérateurs dont les puissances de $\partial$ sont toutes négatives.
Par exemple, $P = \sum_{n \in \Z_{<0}} \varpi^{-n} \cdot (\varpi^{k+1} \partial)^n$ est un élément de $\E_{k+1}(U) \backslash \Ek$.
Il n'existe donc pas de morphisme injectif continu de $\E_{k+1}(U)$ dans $\E_k(U)$ prolongeant l'inclusion $\widehat{\mathcal{D}}^{(0)}_{\X , k+1 , \Q} (U) \hookrightarrow \Dkq(U)$.

\subsubsection*{Les faisceaux de $K$-algèbres $\E_k$}

Dans ce qui précède, le microlocalisé $\Ek$ a \'et\'e défini sur un ouvert affine $U$ de $\X$ sur lequel on dispose d'une coordonnée locale.
On rappelle que la $K$-algèbre de Banach $(\Dku , | \cdot |_k)$ ne dépend pas du choix de la coordonnée locale.
Puisque $\Ek$ est défini par une propriété universelle au dessus de $\Dku$, il est indépendant du choix de la coordonnée locale.
On note $\mathcal{U}$ l'ensemble des ouverts affines de $\X$ sur lesquels on dispose d'une coordonnée locale.
C'est une base de voisinages ouverts de $\X$.

\begin{prop}
Il existe un unique faisceau $\E_k$ de $K$-algèbres sur $\X$ tel que pour tout ouvert $U$ de $\mathcal{U}$, $\Ek = \Dkq(U) \left\langle | \cdot |_k , \{\partial^n\}_{n\in\N} \right\rangle$.
\end{prop}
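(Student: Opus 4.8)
Le plan est de construire $\E_k$ comme l'unique prolongement � $\X$ d'un faisceau d�fini sur la base $\mathcal{U}$ des ouverts affines munis de coordonn�es locales, via le th�or�me standard de prolongement d'un faisceau d�fini sur une base. On a d�j� observ� que l'alg�bre $\Ek$ ne d�pend pas du choix des coordonn�es locales sur $U$ ; les $K$-alg�bres $\E_k(U)$, pour $U \in \mathcal{U}$, sont donc bien d�finies. Il reste alors deux choses � v�rifier : l'existence de morphismes de restriction fonctoriels $\rho_{UV} : \E_k(U) \to \E_k(V)$ pour $V \subset U$ dans $\mathcal{U}$, prolongeant la restriction de $\Dkq$, puis les axiomes de faisceau pour les recouvrements d'un ouvert de $\mathcal{U}$ par des ouverts de $\mathcal{U}$.

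Pour le premier point, je consid�rerais, pour $V \subset U$ deux ouverts de $\mathcal{U}$ (quitte � munir $V$ des coordonn�es de $U$ restreintes), le morphisme $f : \Dkq(U) \to \E_k(V)$ compos� de la restriction $\Dkq(U) \to \Dkq(V)$ et de l'inclusion $\Dkq(V) \hookrightarrow \E_k(V)$. Il envoie la partie multiplicative $S$ dans $\E_k(V)^\times$. La norme spectrale de $\O_{\X , \Q}$ d�croissant par restriction, la norme $| \cdot |_k$ sur $V$ est major�e par celle sur $U$ ; comme $| \partial^\alpha |_k$ prend la m�me valeur sur $U$ et sur $V$, le lemme \ref{lemme1.5} montre que $f$ v�rifie la condition d'uniforme continuit� de la propri�t� universelle \ref{prop1.2} avec constante $c = 1$. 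On obtient ainsi un unique morphisme continu $1$-lipschitzien $\rho_{UV} : \E_k(U) \to \E_k(V)$ prolongeant la restriction. L'unicit� dans \ref{prop1.2} donne $\rho_{UU} = \id$ et $\rho_{VW} \circ \rho_{UV} = \rho_{UW}$ pour $W \subset V \subset U$, donc la fonctorialit�. Enfin, la compatibilit� des d�rivations � la restriction assure que l'application coefficient par coefficient $\sum_\alpha a_\alpha \varpi^{k \lfloor \alpha \rfloor} \partial^\alpha \mapsto \sum_\alpha (a_\alpha)_{|V} \varpi^{k \lfloor \alpha \rfloor} \partial^\alpha$ est un morphisme continu de $K$-alg�bres prolongeant la restriction sur $\Dkq(U)$ ; par unicit�, $\rho_{UV}$ co�ncide avec elle, ce qui en fournit une description explicite au moyen de la proposition \ref{prop2.7}.

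Je v�rifierais ensuite les axiomes de faisceau sur $\mathcal{U}$. Soit $U \in \mathcal{U}$ recouvert par des $U_i \in \mathcal{U}$ ; la quasi-compacit� de $\X$ permet de se ramener � un recouvrement fini. La s�paration est imm�diate : si $s = \sum_\alpha a_\alpha \varpi^{k \lfloor \alpha \rfloor} \partial^\alpha$ v�rifie $\rho_{UU_i}(s) = 0$ pour tout $i$, alors $(a_\alpha)_{|U_i} = 0$ pour tout $\alpha$ et tout $i$ par unicit� des coefficients (proposition \ref{prop2.7}), donc $a_\alpha = 0$ par s�paration du faisceau $\O_{\X , \Q}$, d'o� $s = 0$. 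Pour le recollement, on part de sections $s_i = \sum_\alpha a^{(i)}_\alpha \varpi^{k \lfloor \alpha \rfloor} \partial^\alpha$ co�ncidant sur les intersections, c'est-�-dire $(a^{(i)}_\alpha)_{|U_i \cap U_j} = (a^{(j)}_\alpha)_{|U_i \cap U_j}$ pour tout $\alpha$ ; le faisceau $\O_{\X , \Q}$ fournit pour chaque $\alpha$ une unique fonction $a_\alpha \in \O_{\X , \Q}(U)$ recollant les $a^{(i)}_\alpha$.

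La difficult� principale sera de montrer que la s�rie $s = \sum_\alpha a_\alpha \varpi^{k \lfloor \alpha \rfloor} \partial^\alpha$ appartient bien � $\E_k(U)$, autrement dit que $a_\alpha \to 0$ lorsque $| \alpha | \to \infty$. C'est ici qu'intervient de mani�re essentielle la finitude du recouvrement : la norme spectrale �tant la norme du sup sur les points et $\O_\X(U)$ �tant int�gre donc r�duite, un recouvrement fini donne $| a_\alpha |_U = \max_i | a^{(i)}_\alpha |_{U_i}$. Comme chaque suite $(a^{(i)}_\alpha)_\alpha$ tend vers $0$ et que le maximum ne porte que sur un nombre fini d'indices $i$, on en d�duit $a_\alpha \to 0$, donc $s \in \E_k(U)$ avec $\rho_{UU_i}(s) = s_i$. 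Ceci ach�ve la v�rification des axiomes sur la base. Le th�or�me de prolongement d'un faisceau d�fini sur une base fournit alors un unique faisceau $\E_k$ de $K$-alg�bres sur $\X$ tel que $\E_k(U) = \Dkq(U) \langle | \cdot |_k , S \rangle$ pour tout $U \in \mathcal{U}$, l'unicit� r�sultant de ce que tout tel faisceau est d�termin� par ses sections sur la base via $\E_k(W) = \varprojlim_{U \subset W,\, U \in \mathcal{U}} \E_k(U)$.
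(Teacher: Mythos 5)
Votre preuve est correcte et suit essentiellement la m�me d�marche que celle de l'article : les morphismes de restriction s'obtiennent par la propri�t� universelle \ref{prop1.2} appliqu�e au compos� $\Dkq(U) \to \Dkq(V) \hookrightarrow \E_k(V)$, les axiomes de faisceau sur la base $\mathcal{U}$ d�coulent de l'unicit� de l'�criture en s�rie donn�e par la proposition \ref{prop2.7}, et on conclut par prolongement unique d'un faisceau d�fini sur une base. Vous explicitez simplement des d�tails que l'article laisse implicites, en particulier la v�rification de la condition d'uniforme continuit� avec $c=1$ et la convergence des coefficients recoll�s (via un sous-recouvrement fini et la formule $|a_\alpha|_U = \max_i |a_\alpha^{(i)}|_{U_i}$ pour la norme spectrale).
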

\begin{proof}
Soit $V \subset U$ deux ouverts de $\mathcal{U}$.
Puisque $\E_k(V)$ ne dépend pas du choix de la coordonnée locale, le morphisme composé $\Dku \to \Dkq(V) \to \E_k(V)$ vérifie les hypothèses de la propriété universelle \ref{prop1.2} appliquée à $\E_k(V)$.
Il existe donc un unique morphisme $\E_k(U) \to \E_k(V)$ induit par le morphisme $\Dku \to \Dkq(V)$.
On en déduit que $\E_k$ est un préfaisceau sur l'ensemble des ouverts de $\mathcal{U}$.
Puisque $\mathcal{U}$ est une base de voisinages ouverts de $\X$, il existe un unique préfaisceau $\E_k$ sur $\X$ coïncidant avec les $\E_k(U)$ sur les ouverts $U$ de $\mathcal{U}$.
Enfin, l'unicité de l'écriture des éléments de $\Ek$ comme séries en les puissances de $(\varpi^k \partial)$ implique que $\E_k$ est un faisceau sur les ouverts $U$ de $\mathcal{U}$.
On en déduit que $\E_k$ est un faisceau de $K$-algèbres sur $\X$.
\end{proof}

Pour r\'esumer, on a construit un faisceau de $K$-algèbres $\E_k$ admettant $\Dkq$ comme sous-faisceau pour lequel la dérivation $\partial$ est localement inversible.
Cependant, on ne dispose pas de morphismes de transition naturels $\E_{k+1} \to \E_k$ induits par les morphismes $\widehat{\mathcal{D}}^{(0)}_{\X, k+1, \Q} \to \Dkq$.
La proposition \ref{prop2.11} se traduit de la manière suivante.

\begin{prop}\label{propekinv}
Un élément $P \in \E_k(U)$ est inversible dans $\E_k(V)$ pour un ouvert $V \subset U$ de $\X$ si et seulement si $\Nb(P) = \nb(P)$ et si le coefficient d'indice $\Nb(P)$ de $P$ est inversible dans $\O_{\X , \Q}(V)$.
\end{prop}

Soit $P = \sum_{n \geq 0} a_n \cdot (\varpi^k\partial)^n$ un opérateur de $\Dkq(U)$. On note $d = \Nb(P)$.
On rappelle que si $P$ est inversible dans $\E_k(V)$, alors son inverse est donné par
\[ P^{-1} = (\varpi^k\partial)^{-d} \cdot  \sum_{i \in \N} \left(-\sum_{n \geq 1} \frac{a_{n+d}}{a_d} \cdot (\varpi^k \partial)^n \right)^i  \cdot a_d^{-1} . \]
L'inverse $P^{-1}$ est dans $\Dkq(V)$ si et seulement si $d = 0$. On retrouve ainsi les éléments localement inversibles du faisceau $\Dkq$, déjà donnés plus tôt dans la proposition \ref{prop2.2.3}.

\subsection{Des microlocalisés $\F_{k , r}$ avec des morphismes de transition}\label{section3.3}

On fixe tout d'abord l'ouvert affine $U$ sur lequel on dispose d'une cordonnée locale.
En ne prenant plus la seule norme $|\cdot|_k$ pour définir le microlocalisé de $\Dku$, mais un certain nombre des normes précédentes $|\cdot|_r , \dots , |\cdot|_k$ pour un entier naturel $r$ supérieur ou égal à un, la condition d'uniforme continuité pour obtenir des morphismes de transition devient évidente.
Dans cette partie, l'entier $r \geq 1$ est fixé et $k$ est toujours suppos\'e sup\'erieur ou \'égal \`a $r$.
La partie multiplicative $S$ est encore constituée des puissances positives de la dérivation $\partial$.

\begin{definition}
Pour tout entier $k \geq r$, on note $\Fkr := \Dku \langle |\cdot|_r , \dots ,  |\cdot|_k ; S \rangle$ et $\varphi_{k , r} : \Dku \hookrightarrow \Fkr$ le morphisme de $K$-algèbres associé.
\end{definition}

Soit $\|\cdot \|_{k,r}$ la norme de $\Fkr$ ; $(\Fkr , \|\cdot \|_{k,r})$ est une $K$-algèbre de Banach.
Par construction, $\varphi_{k , r}$ est une isométrie de $(\Dku , \max\{|\cdot|_r , \dots ,  |\cdot|_k\})$ dans $(\Fkr , \|\cdot \|_{k,r})$.
Pour tout opérateur différentiel $P$ de $\Dku$, on vérifie que $|P|_r \leq \dots \leq  |P|_k$. On en déduit l'égalité $\max\{|P|_r , \dots ,  |P|_k\} = |P|_k$.
Autrement dit, $\varphi_{k , r}$ est une isométrie de $(\Dku , |\cdot|_k)$ dans $(\Fkr , \|\cdot \|_{k,r})$.
On identifie donc $\Dku$ à une sous-algèbre de $\Fkr$ via le morphisme $\varphi_{k , r}$. Cette identification est compatible avec les normes.
Il est important de remarquer que la norme de $\Fkr$ n'est cette fois ci plus multiplicative mais seulement sous-multiplicative dés que $k>r$. En effet, on sait d'après le lemme \ref{lemme1.5} que
\[ \| \partial^{-1} \|_{k,r} = \max_{ r \leq i \leq k} \{ |\partial|_i^{-1} \} = \max_{ r \leq i \leq k} \{ |\varpi|^i \} = |\varpi|^r = p^{-r} . \]
Cependant, on a toujours $\| \partial \|_{k,r} = |\partial |_k = | \varpi |^{-k} = p^k$.
Par ailleurs, $\F_{k , k}(U) = \Ek$. En effet, on localise alors l'algèbre $\Dku$ seulement pour la norme $|\cdot |_k$.
Par ailleurs, le morphisme $f_k = \phi_{k  | \widehat{\mathcal{D}}^{(0)}_{\X , k+1 , \Q} (U) } : \widehat{\mathcal{D}}^{(0)}_{\X , k+1 , \Q} (U) \hookrightarrow \Fkr$ vérifie de manière évidente la condition d'uniforme continuité pour $\F_{k+1 , r}(U)$ avec pour constante $c = 1$.
En effet, pour tout entier $n \in \N$ et pour tout opérateur $P$ de $\widehat{\mathcal{D}}^{(0)}_{\X , k+1 , \Q} (U)$, on a
\[\max_{r\leq i \leq k} \{ |\partial^{-n} \cdot P |_i \} \leq  \max_{r\leq i \leq k+1} \{ |\partial^{-n} \cdot P |_i \} \leq  \max_{r\leq i \leq k+1} \{ |\partial|_i^{-n} \cdot |P |_i \} . \]
Par propriété universelle du localisé $\F_{k+1 , r}(U)$, le morphisme $f_k$ se factorise uniquement par un morphisme continu de $K$-algèbres $\varepsilon_{k , r} : \F_{k+1 ,r}(U) \to \Fkr$.
Ce morphisme est 1-lipschitzien, tout comme l'est l'inclusion de l'algèbre $\widehat{\mathcal{D}}^{(0)}_{\X , k+1 , \Q} (U)$ dans $\Dku$.
Ainsi, $\|\varepsilon_{k , r}(S)\|_{k , r} \leq \|S\|_{k+1 , r} $ pour tout élément $S$ de $\F_{k+1,r}(U)$.

\begin{prop}\label{prop4.1.23}
Tout élément $S$ de $\Fkr$ s'écrit uniquement sous la forme
\[ S = \sum_{n = 0}^\infty a_n \cdot (\varpi^k\partial)^n + \sum_{ n =1}^\infty  a_{-n} \cdot (\varpi^r\partial)^{-n} \]
avec $a_n \to 0$ lorsque $n \to \pm \infty$. De plus, $\| S \|_{k , r} = \max_{n\in \Z} \{ |a_n| \}$.
\end{prop}
\begin{proof}
On vérifie toujours que pour toute fonction $f \in \O_{\X,\Q}(U)$,
\[ \partial^{-1} f = \sum_{n=0}^{+\infty} (-1)^n \partial^n(f) \cdot \partial^{-(n+1)} \in \F_{k , r}(U) .\]
Cette série converge dans $\F_{k , r}(U)$ puisque $\| \partial^{-(n+1)} \|_{k , r} \leq \| \partial^{-1} \|_{k , r}^{n+1} \leq \left(\frac{1}{p^r}\right)^{n+1} \underset{n\to \infty}{\longrightarrow} 0$.
On d\'emontre ensuite que la suite $S_m = \sum_{n = 0}^ {\infty} a_n \cdot (\varpi^k\partial)^n + \sum_{n=-1}^{-m} a_n \cdot (\varpi^r\partial)^n$
est de Cauchy dans l'algèbre de Banach $(\F_{k , r}(U) , \| \cdot \|_{k , r})$.
Cependant, contrairement \`a la preuve de la proposition \ref{prop2.7}, la norme $\| \cdot \|_{k , r}$ est maintenant sous-multiplicative : il n'est donc pas clair que $\| S_m \|_{k , r} = \max_{n \geq -m} \{ |a_n| \}$.
Cette égalité nécessite plus de travail que pour l'algèbre $\Ek$. L'élément
\[ S'_m := S_m \cdot (\varpi^r \partial)^m = \sum_{n = 0}^\infty a_n \cdot \varpi^{rm} \varpi^{kn} \cdot \partial^{n+m} + \sum_{n=0}^{m-1} a_{n-m} \cdot (\varpi^r\partial)^n \]
appartient à $\Dku$. D'après la proposition \ref{prop1.4} et le lemme \ref{lemme1.5}, on sait que
\[ \|S_m\|_{k , r} = \|S'_m \cdot (\varpi^r \partial)^{-m} \|_{k , r} =  \| (\varpi^r \partial)^{-m} \cdot S'_m \|_{k , r} = \max_{r \leq \ell \leq k} \left\{ |\varpi^r \partial|_\ell^{-m} \cdot | S'_m|_\ell \right\} . \]
Pour $r \leq \ell \leq k$, on a
\[ S'_m = \sum_{n = 0}^\infty a_n \cdot \varpi^{rm+kn-\ell(n+m)} \cdot (\varpi^\ell \partial)^{n+m} + \sum_{n=0}^{m-1} a_{n-m} \cdot \varpi^{(r-\ell)n} \cdot (\varpi^\ell\partial)^n . \]
Comme $|\varpi^r\partial|_\ell^{-m} = |\varpi|^{(\ell - r)m}$, on obtient
\begin{align*}
|\varpi^r \partial|_\ell^{-m} \cdot | S'_m|_\ell  & = \max \left\{ 
 \max_{n\in\N} |a_n| \cdot |\varpi|^{rm+kn-\ell(n+m)+ (\ell-r)m} ~ ,  \right. \\
& \hspace{4.55cm} ~ \left.\max_{0 \leq n \leq m-1} |a_{n-m}| \cdot |\varpi|^{(r-\ell)n + (l-r)m} \right\} \\
& = \max \left\{ \max_{n\in\N}  |a_n| \cdot |\varpi|^{(k-\ell)n} ~ , ~\max_{0 \leq n \leq m-1}  |a_{n-m}| \cdot |\varpi|^{((\ell-r)(m-n)}  \right\} .
\end{align*}
Le terme $\max_{n\in\N}\{  |a_n| \cdot |\varpi|^{(k-\ell)n} \}$ est maximum pour $\ell = k$.
En effet, lorsque $\ell \in \{1 , \dots , k \}$, la puissance $(k-l)n$ de $\varpi$ est positive et $ |\varpi|^{(k-\ell)n} \leq 1$.
De manière analogue, le terme $\max_{0 \leq n \leq m-1} \{ |a_{n-m}| \cdot |\varpi|^{((\ell-r)(m-n)} \}$ est maximum pour $\ell = r$, auquel cas on obtient $\max_{0 \leq n \leq m-1} |a_{n-m}| = \max_{-m \leq n <0 } |a_n|$. On en déduit finalement que
\[ \|S_m\|_{k , r} = \max \left\{ \max_{n\in\N} |a_n| , \max_{-m \leq n <0 } |a_n| \right\} = \max_{n \geq -m} |a_n|  . \]
Il en découle immédiatement que la suite $(S_m)_{m\in\N}$ est de Cauchy.
Elle converge donc dans la $K$-algèbre de Banach $\Fkr$ vers un élément que l'on note $S$. 
Enfin, le passage à la limite $m \to \infty$ donne $\| S \|_{k , r} = \max_{n\in \Z} \{ |a_n| \}$.
Le reste de la preuve est analogue à celle de la proposition \ref{prop2.7}.
\end{proof}

Ainsi, tout élément $S$ du microlocalisé $\Fkr$ s'écrit uniquement sous la forme
\[ S = \underbrace{\sum_{n = 0}^\infty a_n \cdot (\varpi^k\partial)^n}_{P \in \Dku} + \underbrace{\sum_{n = 1}^{\infty} a_{-n} \cdot (\varpi^r \partial)^{-n}}_Q \]
avec $a_n \to 0$ lorsque $n \to \pm \infty$ et $\| S \|_{k,r} = \max_{n\in \Z} \{ |a_n| \} = \max\{|P|_k , \|Q\|_{k,r} \}$.
Soient $Q$ et $Q'$ deux éléments du microlocalisé $\Fkr$ formés seulement de puissances strictement négatives de la dérivation $\varpi^r\partial$.
On observe que $\| Q \cdot Q' \|_{k,r} = \| Q \|_{k,r} \cdot \| Q' \|_{k,r}$.
Autrement dit, la norme $\| \cdot \|_{k,r}$ de $\Fkr$ est multiplicative pour le produit de tels éléments, comme elle l'est pour le produit d'opérateurs différentiels de $\Dku$.
On note donc $|Q|_r$ pour $\|Q\|_{k,r}$ et $\| S \|_{k,r} = \max\{|P|_k , |Q|_r \} $.
Par ailleurs, l'action de la dérivation $\partial^{-1}$ sur une fonction $f$ de $\O_{\X,\Q}(U)$ est encore donnée par $\partial^{-1} f = \sum_{n=0}^{+\infty} (-1)^n \partial^n(f) \cdot \partial^{-(n+1)} \in \F_{k , r}(U)$.
On rappelle que le morphisme de transition $\varepsilon_{k , r} : \F_{k+1 ,r}(U) \to \Fkr$ est une factorisation de l'inclusion de $\widehat{\mathcal{D}}^{(0)}_{\X , k+1 , \Q} (U)$ dans $\Fkr$.
Comme $\varepsilon_{k,r}$ envoie la dérivation $\partial$ sur elle-même, $\varepsilon_{k,r}(\partial^n) = \partial^n$ pour tout entier relatif $n \in \Z$.
Puisque ce morphisme est continu, on a $\varepsilon_{k,r}(S) = S$ pour tout élément $S$ de $\F_{k+1 , r}(U)$.
Ainsi, $\varepsilon_{k,r}$ injecte l'algèbre $\F_{k+1 , r}(U)$ dans $\Fkr$.
Pour récapituler, on obtient le diagramme commutatif
\[ \xymatrix@R=3pc @C=3pc{ \widehat{\mathcal{D}}^{(0)}_{\X , k+1 , \Q} (U) \ar@{^{(}->}[r]^{i_k} \ar@{^{(}->}[d]_{\varphi_{k+1 , r}} & \Dku \ar@{^{(}->}[d]^{\varphi_{k, r}} \\ \F_{k+1 , r}(U) \ar@{^{(}->}[r]_{\varepsilon_{k,r}} & \Fkr } \]
où $i_k$ et $\varepsilon_{k,r}$ sont des inclusions 1-lipschitziennes et $\varphi_{k , r}$ et $\varphi_{k+1 , r}$ des isométries.
La proposition suivante relie les $K$-algèbres $\Fkr$ et $\Ek$ : le microlocalisé $\Fkr$ est une sous-algèbre de $\Ek$.

\vspace{0.4cm}

On rappelle que $\mathcal{U}$ est l'ensemble des ouverts affines de $\X$ sur lesquels on dispose d'une coordonnée locale.
C'est une base de voisinages ouverts de $\X$. On démontre comme pour le faisceau localisé $\E_k$ le résultat suivant.

\begin{prop}
Il existe un unique faisceau $\F_{k,r}$ de $K$-algèbres sur $\X$ tel que pour tout ouvert $U$ de $\mathcal{U}$, $\Fkr = \Dkq(U) \left\langle |\cdot|_r , \dots ,  |\cdot|_k, \{\partial^n\}_{n\in\N} \right\rangle$.
\end{prop}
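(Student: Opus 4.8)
Le plan est de reprendre la stratégie de la proposition analogue établie pour le faisceau $\E_k$, en prenant garde au fait que la norme $\| \cdot \|_{k , r}$ n'est ici que sous-multiplicative. Comme pour $\Ek$, l'algèbre $\Fkr$ est caractérisée par une propriété universelle au-dessus de $\Dkq(U)$ relativement aux normes $| \cdot |_r , \dots , | \cdot |_k$. Ces normes ne dépendant pas du choix des coordonnées locales sur $U$, le microlocalisé $\Fkr$ est lui aussi indépendant de ce choix : l'algèbre $\Fkr$ est donc canoniquement attachée à tout ouvert $U \in \mathcal{U}$, sans ambiguïté.

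La construction des morphismes de restriction constitue le cœur de la preuve. Pour $V \subset U$ deux ouverts de $\mathcal{U}$, je considère le morphisme composé $f : \Dkq(U) \to \Dkq(V) \hookrightarrow \F_{k , r}(V)$ obtenu en restreignant les coefficients puis en composant avec l'inclusion $\varphi_{k , r}$. Ce morphisme envoie la partie multiplicative $S = \{ \partial^\alpha \}$ dans $\F_{k , r}(V)^\times$, et il reste à vérifier la condition d'uniforme continuité de la propriété universelle \ref{prop1.2} appliquée au microlocalisé $\F_{k , r}(V)$. Le point clef est que la norme $|\partial^\alpha|_j = p^{j \lfloor \alpha \rfloor}$ d'une puissance de dérivation ne dépend ni de $U$ ni de $V$, tandis que la restriction $\O_{\X , \Q}(U) \to \O_{\X , \Q}(V)$ décroît les normes spectrales. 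Le lemme \ref{lemme1.5} donne alors, pour tout $(s , a) \in S \times \Dkq(U)$,
\[ \| f(s)^{-1} f(a) \|_{k , r} = \max_{r \leq j \leq k} \left\{ |s|_j^{-1} \cdot |a_{|V}|_j \right\} \leq \max_{r \leq j \leq k} \left\{ |s|_j^{-1} \cdot |a|_j \right\} , \]
ce qui est exactement la condition voulue avec la constante $c = 1$. La propriété universelle fournit donc un unique morphisme continu et $1$-lipschitzien $\F_{k , r}(U) \to \F_{k , r}(V)$.

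L'unicité dans la propriété universelle \ref{prop1.2} assure que ces morphismes de restriction sont compatibles aux compositions $W \subset V \subset U$, de sorte que $\F_{k , r}$ est un préfaisceau de $K$-algèbres sur $\mathcal{U}$. Puisque $\mathcal{U}$ est une base de voisinages ouverts de $\X$, ce préfaisceau s'étend de manière unique en un préfaisceau sur $\X$. Pour la propriété de recollement, j'invoque l'écriture unique des éléments de $\Fkr$ comme séries $\sum_{\alpha} a_\alpha \cdot \varpi^{[\alpha , k , r]} \partial^\alpha$ établie à la proposition \ref{prop4.1.23} : les coefficients $a_\alpha \in \O_{\X , \Q}$ se recollent car $\O_{\X , \Q}$ est un faisceau, et la condition de convergence $a_\alpha \to 0$ étant locale, la série recollée définit bien une section de $\F_{k , r}$. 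On conclut que $\F_{k , r}$ est un faisceau de $K$-algèbres sur $\X$.

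La principale difficulté, par rapport au cas du faisceau $\E_k$, réside dans la vérification de la condition d'uniforme continuité : il faut exploiter le fait que les normes des puissances $\partial^\alpha$ sont globales, c'est-à-dire indépendantes de l'ouvert considéré, afin de compenser l'absence de multiplicativité de $\| \cdot \|_{k , r}$ et d'absorber, via le lemme \ref{lemme1.5}, la décroissance des normes spectrales produite par la restriction. Une fois ce point acquis, le reste de l'argument est identique à celui développé pour le faisceau $\E_k$.
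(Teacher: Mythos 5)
Votre preuve est correcte et suit essentiellement la même démarche que celle du papier, qui renvoie mot pour mot à la preuve donnée pour le faisceau $\E_k$ : morphismes de restriction obtenus par la propriété universelle \ref{prop1.2} appliquée au morphisme composé $\Dkq(U) \to \Dkq(V) \hookrightarrow \F_{k,r}(V)$, extension unique du préfaisceau depuis la base $\mathcal{U}$, puis propriété de faisceau via l'unicité de l'écriture en séries. Votre vérification explicite de la condition d'uniforme continuité avec $c = 1$ (normes des $\partial^\alpha$ indépendantes de l'ouvert, décroissance des normes spectrales par restriction, lemme \ref{lemme1.5}) est un détail que le papier laisse implicite, et elle est exacte.
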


Les inclusions locales $\F_{k+1 , r} (U) \hookrightarrow \Fkr$ pour tout ouvert $U$ de $\mathcal{U}$ induisent des morphismes de faisceaux $\F_{k+1 ,r} \to \F_{k , r}$ commutant avec les morphismes de transition $\widehat{\mathcal{D}}^{(0)}_{\X, k+1, \Q} \to \Dkq$.
On dispose ainsi du diagramme commutatif suivant :
\[ \xymatrix@R=3pc @C=3pc{ \widehat{\mathcal{D}}^{(0)}_{\X , k+1 , \Q} \ar[r] \ar[d] & \Dkq \ar[d] \\ \F_{k+1 , r} \ar[r] & \F_{k , r} \,.} \]

\begin{prop}
Le morphisme $\phi_k : \Dku \hookrightarrow \Ek$ se relève en un morphisme injectif de $K$-algèbres $\gamma_{k , r} : \Fkr \hookrightarrow \Ek$.
De plus, l'application $\gamma_{k , r}$ est 1-lipschitzienne.
\end{prop}
\begin{proof}
Le morphisme $\phi_k : \Dku \hookrightarrow \Ek$ vérifie clairement la condition d'uniforme continuité dans la propriété universelle du localisé $\Fkr$.
Il induit donc un unique morphisme de $K$-algèbres 1-lipschitzien $\gamma_{k,r} : \Fkr \to \Ek$.
Ce morphisme envoie les éléments de $\Dku$ sur eux mêmes et $\partial^{-1}$ sur $\partial^{-1}$.
Par continuité, il envoie tout élément $S$ de $\Fkr$ sur l'élément $S$ de $\Ek$.
Ainsi, le morphisme $\gamma_{k,r}$ est injectif.
\end{proof}

Comme pour $\Ek$, on munit l'algèbre $\Fkr$ de la filtration croissante donnée par la norme sous-multiplicative $\| \cdot \|_{k,r}$.
On note $\gr(\Fkr)$ le gradu\'e associ\'e ainsi que $\zeta_k$ et $\zeta_r$ les images respectives des dérivations $(\varpi^k \partial)$ et de $(\varpi^r\partial)^{-1}$ dans le gradué $\gr_0 (\Fkr)$.
On d\'eduit de la description de $\Fkr$ donn\'ee dans la proposition \ref{prop4.1.23} que le groupe ab\'elien $(\gr\Fkr , +)$ est engendr\'e librement par les \'el\'ements $\gamma(a)\cdot \xi_\ell^m$\footnote{L'application $\gamma : \O_{\X , \Q}(U) \to \gr \O_{\X , \Q}(U)$ est explicit\'e dans preuve de la proposition \ref{propnoeth}.} pour $a \in \O_{\X , \Q}(U)$, $m \in \N$ et $\ell \in \{ k , r \}$ :
\begin{equation}\label{eqgr}
\gr (\Fkr) := \bigoplus_{m \in \Z} \gr_m(\Fkr) = \bigoplus_{\ell, m \in \Z} \left(\gr_m(\O_{\X , \Q}(U))\cdot \xi_k^\ell  \oplus \gr_m(\O_{\X , \Q}(U))\cdot \xi_r^\ell \right)
\end{equation}
Pour $k = r$, on rappelle que $\F_{k,k}(U) = \Ek$ et que $\gr (\Ek) \simeq \gr(\O_{\X , \Q}(U)) [ \zeta_k , \zeta_k^{-1}]$.
On munit le gradu\'e  $\gr(\Fkr)$ du produit induit par $\gamma : \Fkr \to \gr \Fkr$ de manière analogue \`a la preuve de la proposition \ref{propnoeth}.
Plus précisément, soit $P$ et $Q$ deux \'el\'ements de $\Fkr$.
Si $\|P\|_{k , r} = p^m$ et $\|Q\|_{k , r} = p^n$, alors $\gamma(P)$ est l'image de $P$ dans $\gr_m(\Fkr)$ et $\gamma(P)\cdot \gamma(Q)$ est défini comme l'image $\gamma(P \cdot Q)$ de $P \cdot Q$ dans $\gr_{m + n}(\Fkr)$.

\begin{lemma}
On suppose que $k > r \geq 1$. Alors $\gr\Fkr \simeq \gr(\O_{\X , \Q}(U)) [ \zeta_k , \zeta_r] / (\zeta_k \cdot \zeta_r)$ en tant que $\gr K$-algèbres.
\end{lemma}
\begin{proof}
La norme de $\Fkr$ étant sous-abélienne, le gradué $\gr \Fkr$ est une $\gr K$-algèbre commutative.
Puisque $(\varpi^k \partial) \cdot ( \varpi^r \partial)^{-1} = \varpi^{k-r}$, on a $\| (\varpi^k \partial) \cdot ( \varpi^r \partial)^{-1} \| _{k,r} = p^{r-k} < \| \varpi^k \partial\|_{k , r} \cdot \| ( \varpi \partial)^{-1} \|_{k , r} = 1$.
Ainsi, $\zeta_k \cdot \zeta_r = 0$ dans $\gr(\Fkr)$.
Ce fait et la description \ref{eqgr} de $\gr(\Fkr)$ fournissent une surjection de $\gr K$-algèbres $\varphi : \gr(\O_{\X , \Q}(U)) [ \zeta_k , \zeta_r] \twoheadrightarrow \gr \Fkr$ telle que $(\zeta_k \cdot \zeta_r ) \subset I = \ker(\varphi)$ et envoyant les polynômes respectivement en $\xi_k$ et en $\xi_r$ sur eux-m\^eme.
Autrement dit, $\varphi$ est injectif sur les \'el\'ements de la forme $\sum_n \alpha_n \cdot \xi_k^n + \sum_m \beta_m \cdot \xi_r^m$.
Ainsi, le noyau $I$ de $\varphi$ est exactement l'id\'eal engendr\'e par $\zeta_k \cdot \zeta_r$.
On en déduit que $\gr (\Fkr) \simeq \gr(\O_{\X , \Q}(U)) [ \zeta_k , \zeta_r] / (\zeta_k \cdot \zeta_r)$ en tant que $\gr K$-algèbres.
\end{proof}

Le gradu\'e $\gr \Fkr$ admettant de la torsion, il ne permet plus d'obtenir directement des résultats de platitude pour le microlocalis\'e $\F_{k , r}(U)$.
La preuve de la proposition suivante est donc bien plus technique et complexe que celle de la proposition \ref{propnoeth}.

\begin{prop}\label{prop2.28}
La $K$-algèbre $\Fkr$ est noetherienne et les morphismes de faisceaux $\varepsilon_{k , r} : \F_{k+1 , r} \to \F_{k  , r}$ et $\varphi_{k , r} : \Dkq \to \F_{k , r}$ sont plats, le tout à gauche et à droite.
\end{prop}
\begin{proof}
Puisque le gradué $\gr \Fkr \simeq \gr(\O_{\X , \Q}(U)) [ \zeta_k , \zeta_r] / (\zeta_k \cdot \zeta_r)$ est noetherien, l'algèbre $\Fkr$ l'est aussi toujours d'après \cite[proposition 1.1]{schneider}.
On adapte ensuite la preuve de \cite[lemme 1.3.4]{adriano} au cas du morphisme $\Dkq \to \F_{k , r}$ ; le lecteur peut s'y référer pour plus de détails.
Il suffit de montrer que $\mathrm{Tor}_1^{\Dkq}(\F_{k , r} , \bullet) = 0$.
Comme $\Dkq \to \widehat{\D}^{(0)}_{\X , r , \Q}$ est plat par \cite[proposition 2.2.16]{huyghe}, le faisceau $\E_r = \F_{r , r}$ est plat sur $\Dkq$ d'après la proposition \ref{propnoeth}.
En conséquence, $\mathrm{Tor}_1^{\Dkq}(\F_{k , r} , \bullet) = 0$ dès que $\mathrm{Tor}_1^{\Dkq}(\F_{r , r} / \F_{k , r} , \bullet) = 0$.
Puisque $\F_{ r , r} / \F_{k , r} \simeq \widehat{\D}^{(0)}_{\X , r , \Q} / \Dkq$ et $\Dkq \to \widehat{\D}^{(0)}_{\X , r , \Q}$ est plat, on obtient la platitude de $\F_{k , r}$ sur $\Dkq$.
Enfin, pour démontrer que le morphisme $\varepsilon_{k , r} : \F_{k+1 , r} \to \F_{k  , r}$ est plat à gauche et à droite, on adapte la preuve de la proposition 2.2.16 du m\^eme article \cite{huyghe} aux cas des faisceaux $\F_{k , r}$ pour $r \geq 1$ fixé.
On fixe un ouvert affine $U$ de $\X$ sur lequel on dispose d'une  coordonnée locale.
Il suffit de montrer que le morphisme de $K$-algèbres $\F_{k+1 , r}(U) \hookrightarrow \F_{k  , r}(U)$ est plat à gauche et à droite.
On note $A = \O_\X(U)$ et $\F_{k , r}^\circ(U)$ l'ensemble des éléments de $\F_{k , r}(U)$ de norme $\| \cdot \|_{k , r}$ inférieure ou égale à un :
\[ \F_{k,r}^\circ(U) =\left\{ \sum_{n =0}^\infty a_n \cdot (\varpi^k \partial)^n + \sum_{n = 1}^{\infty} a_{-n} \cdot (\varpi^r \partial)^{-n} \in \Fkr , ~~ a_n \in A \right\} . \]
C'est une $A$-algèbre vérifiant $\F_{k , r}(U) = \F_{k , r}^\circ(U) \otimes_\V K$.
On introduit le $A$-module
\[ H := \F_{k+1 , r}^\circ(U) + \mathcal{D}^{(0)}_{\X, k}(U) . \]
La dérivation $(\varpi^k \partial)$ n'appartient pas à la $\V$-algèbre $\F_{k+1,r}^\circ(U)$.
Cependant, elle appartient à la $K$-algèbre $\F_{k+1 , r}(U) = \F_{k+1,r}^\circ(U) \otimes_\V K$.
La démonstration de la platitude du morphisme $\F_{k+1 , r}(U) \hookrightarrow \F_{k  , r}(U)$ se décompose en trois étapes.
\begin{enumerate}
\item
Vérifions tout d'abord que le module $H$ est une $A$-algèbre.
Soit $Q \in \F_{k+1 , r}^\circ(U)$ et $P \in \mathcal{D}^{(0)}_{\X, k}(U)$.
Comme $k \geq r$, on observe facilement que $P \cdot Q$ appartient au $A$-module $H$.
En effet, on considère pour cela le $A$-module suivant :
\[ F_{k , r}(U) := \left\{ \sum_{n = 0}^\infty a_n \cdot (\varpi^k \partial)^n+ \sum_{n = 1}^{\infty} a_{-n} \cdot (\varpi^r \partial)^{-n} , ~~ a_n \in A , ~~ a_n = 0 ~\mathrm{pour}~ n << 0 \right\} . \]
Il est clair que l'on dispose d'inclusions $F_{k+1 , r}(U) \subset F_{k , r}(U) \subset H$.
On peut décomposer l'élément $Q$ sous la forme $Q = Q_1 + \varpi \cdot Q_2$ avec $Q_1 \in F_{k+1 , r}(U)$ et $Q_2 \in \F_{k+1 , r}^\circ(U)$.
On a $(\varpi^k \partial) \cdot Q_1 \in F_{k,r}(U) \subset H$.
Par ailleurs, $(\varpi^k \partial) \cdot (\varpi Q_2) = (\varpi^{k+1} \partial) \cdot Q_2 \in \F_{k+1 ,r}^\circ(U)$. Autrement dit, $(\varpi^k \partial) \cdot Q \in H$.
On en déduit que $P \cdot Q \in H$.
Pour montrer que $H$ est une $A$-algèbre, il suffit maintenant d'observer que $Q \cdot P \in H$.
On rappelle que pour toute fonction $a$ de $A$, on a
\[ (\varpi^r\partial)^{-1} \cdot a = \sum_{n=0}^{+\infty} (-1)^n \partial^n(a)\varpi^{rn} \cdot (\varpi^r\partial)^{-(n+1)} \in \F_{k+1 , r}^\circ(U) \]
avec $(-1)^n \partial^n(a)\varpi^{rn} \underset{n\to \infty}{\longrightarrow} 0$.
En particulier, comme $k \geq r$,
\[ (\varpi^r \partial)^{-1} \cdot a (\varpi^k \partial) = \varpi^{k-r} \cdot\sum_{n=0}^{+\infty} (-1)^n \partial^n(a) \cdot \partial^{-n} \in \F_{k+1 , r}^\circ(U) . \]
On en déduit que pour tout entier $\ell \in \N$,
\[ (\varpi^r \partial)^{-\ell} \cdot P \in H = \F_{k+1 , r}^\circ(U) + \mathcal{D}^{(0)}_{\X, k}(U) . \]
Ainsi, pour tout élément fini $\tilde{Q}$ de $\F_{k+1 , r}^\circ(U)$, il en découle que $\tilde{Q} \cdot P \in H$.
Soit $d$ le degré de l'opérateur différentiel fini $P$. L'élément $Q$ s'écrit uniquement sous la forme
\[ Q = \underbrace{\sum_{n =0}^\infty a_n \cdot (\varpi^{k+1} \partial)^n}_{Q_1} + \underbrace{\sum_{n = 1}^d a_{-n} \cdot (\varpi^r \partial)^{-n} }_{Q_2}+ \underbrace{\sum_{n = d+1}^{\infty} a_{-n} \cdot (\varpi^r \partial)^{-n}}_{Q_3} \]
Puisque $Q_1 \in \widehat{\mathcal{D}}^{(0)}_{\X, k+1}(U)$, il est clair que $Q_1 \cdot P \in  \widehat{\mathcal{D}}^{(0)}_{\X, k+1}(U) + \mathcal{D}^{(0)}_{\X, k}(U) \subset H$.
D'après ce que l'on vient d'établir ci-dessus, $Q_2 \cdot P \in H$.
Soit $m > d$ et $Q_3^{(m)}$ la troncation à l'ordre $-m$ de $Q_3$ :
\[ Q_3 :=  \sum_{n = d}^m a_{-n} \cdot (\varpi^r \partial)^{-n} .\]
Comme $(\varpi^r \partial)^{-1}$ diminue strictement le degré et comme $d$ est le degré de l'opérateur $P$, on vérifie que $Q_3^{(m)} \cdot P \in \F_{k+1 , r}^\circ(U)$.
De plus, on vérifie que la suite $(Q_3^{(m)} \cdot P)_{m > d}$ est de Cauchy dans l'algèbre $(\F_{k+1 , r}^\circ(U) , \| \cdot \|_{k , r})$.
Enfin, puisque la $\V$-algèbre $\F_{k+1 , r}^\circ(U)$ est complète, on en déduit que $Q_3 \cdot P \in \F_{k+1 , r}^\circ(U)$.
Pour conclure, on a bien montré que $Q \cdot P \in H$.
On a ainsi démontré que $H$ est une $A$-algèbre.

\item
Démontrons ensuite que la $A$-algèbre $H$ est noetherienne à gauche et à droite.
L'algèbre $\F_{k+1 , r}^\circ(U)$ est noetherienne d'après l'article \cite[proposition 1.1]{schneider}.
En effet, son gradué est isomorphe à l'algèbre noetherienne $\gr(A)[ \zeta_k , \zeta_r] / (\zeta_k \cdot \zeta_r)$ et $\F_{k+1 , r}^\circ(U)$ est complet pour la topologie $\varpi$-adique.
L'algèbre $H$ est engendrée sur $A$ par $\F_{k+1 , r}^\circ(U)$ et par la dérivation  $(\varpi^k \partial)$.
Pour toute fonction $a$ de $A$, on a
\[ [\varpi^k \partial , a ] = \varpi^k \partial(a) \in A \subset \F_{k+1 , r}^\circ(U). \]
Comme $ [\varpi^k \partial , \varpi^{k+1} \partial ] = [\varpi^k \partial , (\varpi^r \partial)^{-1} ] = 0$, $[\varpi^k \partial , P ] \in \F_{k+1 , r}^\circ(U)$ pour tout opérateur différentiel fini $P$ de $\F_{k+1 , r}^\circ(U)$.
Enfin, puisque la $\V$-algèbre $\F_{k+1 , r}^\circ(U)$ est complète pour la topologie $\varpi$-adique, on en déduit que pour tout $P \in \F_{k+1 , r}^\circ(U)$, $[\varpi^k \partial , P ] \in \F_{k+1 , r}^\circ(U)$.
Une récurrence sur $\ell \in \N^*$ implique alors que
\begin{equation}\label{eqcommutateur}
\forall P \in \F_{k+1 , r}^\circ(U), ~~ [(\varpi^k \partial)^\ell , P ] \in \sum_{i=0}^{\ell-1} \F_{k+1 , r}^\circ(U) \cdot (\varpi^k \partial)^i
\end{equation}
Le même argument que celui de la fin de la preuve de la proposition 2.2.16 de l'article \cite{huyghe} montre ensuite que $H$ est noetherien.
On le rappelle brièvement ci-dessous.
Soit $I$ un idéal à gauche de $H$. On note $J$ l'ensemble des éléments $R$ de $\F_{k+1 , r}^\circ(U)$ pour lesquels il existe $\ell \in \N$, $P \in I$ et $R_0 , \dots , R_{\ell-1} \in \F_{k+1 , r}^\circ(U)$ tels que
\[ P = R \cdot (\varpi^k \partial)^\ell + \sum_{i=0}^{\ell-1} R_i \cdot (\varpi^k \partial)^i . \]
Soit $R'$ un autre élément de $J$, où $P' = R' \cdot (\varpi^k \partial)^{\ell'} + \sum_{i=0}^{\ell'-1} R'_i \cdot (\varpi^k \partial)^i \in I$.
On suppose par exemple que $\ell \geq \ell'$.
La relation \ref{eqcommutateur} implique que
\[ P + (\varpi^k \partial)^{\ell - \ell'} \cdot P' = (R + R') \cdot (\varpi^k \partial)^\ell + \sum_{i=0}^{\ell - 1} R_i^{''} \cdot (\varpi^k \partial)^i \]
pour certains $R_i^{''}  \in \F_{k+1 , r}^\circ(U)$.
Autrement dit, $R + R' \in J$ et $J$ est un idéal à gauche de $\F_{k+1 , r}^\circ(U)$.
Ainsi, $J$ est engendré sur $\F_{k+1 , r}^\circ(U)$ par des éléments $R_1 , \dots , R_s$.
Par ailleurs, $I \cap \F_{k+1 , r}^\circ(U)$ est un idéal à gauche de $\F_{k+1 , r}^\circ(U)$. Il est donc engendré par des éléments $P_1, \dots , P_t$.
Enfin, on observe que $I$ est engendré en tant qu'idéal à gauche de $H$ par les éléments $R_1 , \dots , R_s, P_1, \dots , P_t$.
Ceci prouve que $H$ est noetherien à gauche. De même, on démontre que $H$ est noetherien à droite.

\item
Montrons enfin que le morphisme d'algèbres $\F_{k+1 , r}(U) \hookrightarrow \F_{k  , r}(U)$ est plat à gauche et à droite.
Comme $\mathcal{D}^{(0)}_{\X, k+1 , \Q}(U) = \mathcal{D}^{(0)}_{\X, k , \Q}(U)$, on a $H_\Q := H \otimes_\V K = \F_{k+1 , r}(U)$.
Par ailleurs, il est clair que $\widehat{H} = \F_{k , r}^\circ(U)$.
Puisque $\F_{k+1 ,r}^\circ(U) \subset \F_{k,r}^\circ(U)$ et $\mathcal{D}^{(0)}_{\X, k}(U) \subset \F_{k,r}^\circ(U)$, on a $H \subset \F_{k,r}^\circ(H)$.
De plus, l'algèbre $H$ contient $F_{k,r}(U)$ et l'algèbre $\F_{k,r}^\circ(U)$ est le complété $\varpi$-adique de $F_{k,r}(U)$.
Le complété $\varpi$-adique $\widehat{H}$ de $H$ est donc égal à la $\V$-algèbre $\F_{k,r}^\circ(U)$.
Le résultat de platitude découle alors du fait que l'algèbre $H$ est noetherienne.
En effet, dans ce cas le complété $\widehat{H}$ est plat sur $H$ et donc l'algèbre $\widehat{H}_\Q = \F_{k , r}(U)$ est plate sur $H_\Q = \F_{k +1 , r}(U)$.
\end{enumerate}
\end{proof}

\subsubsection*{Opérateurs de $\Dkq$ inversibles dans le microlocalisé $\F_{k , r}$}

Soit toujours $U$ un ouvert affine sur lequel on dispose d'une coordonnée locale.
Il est plus délicat de déterminer les éléments inversibles de l'algèbre $\F_{k,r}(U)$ que ceux de $\F_{k , k}(U) = \Ek$ lorsque $k>r$.
En effet, tout élément $S$ de $\Fkr$ s'écrit uniquement comme une série
\[ S = \sum_{n = 0}^\infty a_n \cdot (\varpi^k\partial )^n + \sum_{n = 1}^{\infty} a_{-n} \cdot (\varpi^r \partial)^{-n} . \]
On note $\partial_k$ et $(\partial_r)^{-1}$ les réductions modulo $\varpi$ respectivement de $\varpi^k \partial$ et $(\varpi^r\partial)^{-1}$. La réduction modulo $\varpi$ de $S$ est une somme finie de la forme $\bar{S} = \sum_{n < 0} \bar{a}_n \cdot \partial_r^{n} + \sum_{n \geq 0} \bar{a}_n \cdot \partial_k ^n$.
Pour $k >r$, on a $\partial_k \cdot \partial_r^{-1} = \partial_r^{-1} \cdot \partial_k = 0$ puisque $(\varpi^k \partial) \cdot (\varpi^r \partial)^{-1} = \varpi^{k-r}$.
Autrement dit, la dérivation $\partial_k$ n'est plus inversible dans le quotient $\F_{k,r}^\circ(U) / \varpi$.
On ne peut donc pas déterminer les éléments inversibles de l'algèbre $\Fkr$ après réduction modulo $\varpi$.
Le même phénomène apparait en considérant le gradué $\gr \F_{k, r}(U)$.
Bien que la dérivation $\partial$ soit inversible dans $\F_{k, r}(U)$, l'image de $\partial$ dans le gradué n'est plus inversible.
Cependant, on dispose d'un morphisme continu et injectif $\varepsilon_{k,r} : \Fkr \hookrightarrow \Ek$.
Puisque les éléments inversibles de l'algèbre $\Ek$ sont connus, on en déduit facilement les opérateurs différentiels de $\Dku$ inversibles dans le microlocalisé $\Fkr$.
Ce cas est suffisant pour définir la variété caractéristique à partir du microlocalisé.
Soit $P = \sum_{n \geq 0} a_n \cdot (\varpi^k \partial)^n$ un opérateur différentiel de $\Dku$.
Les normes de $P$ dans les algèbres $\Ek$ et dans $\Fkr$ coïncident avec la norme usuelle $|P|_k$ de $P$ dans $\Dku$.
Si $P$ est inversible dans l'algèbre $\Fkr$, alors $P$ est inversible dans $\Ek$.
Il doit donc vérifier l'égalité $\Nb(P) = \nb(P)$, o\`u $\Nb(P) = \max \{ n \in \N : |a_n| = |P|_k \}$ et $\nb(P)= \min \{ n \in \N : |a_n| = |P|_k \}$.
On note $d = \Nb(P) = \nb(P)$ cet entier. Pour que $P$ soit inversible dans le localisé $\Ek$, il faut de plus que son coefficient dominant $a_d$ soit inversible dans la $K$-algèbre affinoïde $\O_{\X , \Q}(U)$.
L'inverse de $P$ dans $\Ek$ est alors
\[ P^{-1} = (\varpi^k\partial)^{-d} \cdot  \sum_{i \in \N} \left(-\underbrace{\sum_{n = 0}^{d-1} \frac{a_n}{a_d} \cdot (\varpi^k \partial)^{n-d}}_\alpha - \underbrace{\sum_{n \geq 1} \frac{a_{n+d}}{a_d} \cdot (\varpi^k \partial)^n}_\beta \right)^i  \cdot a_d^{-1} .\]
L'opérateur $P^{-1}$ appartient à l'algèbre $\Fkr$ si et seulement si la somme sur $i$ converge pour la norme $\| \cdot \|_{k,r}$ de $\Fkr$.
Une condition suffisante est d'avoir $\| \alpha + \beta \|_{k , r} < 1$.
On rappelle l'égalité des normes $\| \alpha + \beta \|_{k , r} = \max\{ \| \alpha \|_{k , r} , \| \beta \|_{k , r}\}$ démontrée dans la proposition \ref{prop4.1.23}.
On a toujours
\[ \| \beta \|_{k,r} = | \beta |_k = \left| \sum_{n \geq 1} \frac{a_{n+d}}{a_d} \cdot  (\varpi^k \partial)^n \right|_k < 1 . \]
En effet, l'opérateur $\beta = \sum_{n \geq 1} \frac{a_{n+d}}{a_d} \cdot (\varpi^k \partial)^n$ appartient à l'algèbre $\Dkq(U)$ et par définition de $d$, $| a_{n+d} | < |a_d|$ dès que $n \geq 1$.
Par ailleurs,
\[ \alpha = \sum_{n = 0}^{d-1} \frac{a_n}{a_d} \cdot (\varpi^k \partial)^{n-d} = \sum_{n=0}^{d-1} \frac{a_n}{a_d} \cdot \varpi^{(k-r)(n-d)} \cdot (\varpi^r \partial)^{n-d} . \]
On obtient
\[ \| \alpha \|_{k,r} = \max_{0 \leq n < d} \left( \frac{|a_n|}{|a_d|} \cdot |\varpi|^{(k-r)(n-d)} \right) .\]
On en déduit que l'opérateur différentiel $P$ est inversible dans le microlocalisé $\F_{k,r}(U)$ dès que $|a_n| \cdot p^{(k-r)(d-n)} < |a_d|$ pour tout $n \in \{0, \dots, d-1\}$.
La réciproque est vraie. Cependant, elle n'est pas immédiate puisque la norme $\| \cdot \|_{k , r}$ de la $K$-algèbre $\F_{k , r}(U)$ est seulement sous-multiplicative.

\begin{prop}\label{prop2.22}
Soit $P = \sum_{n = 0}^\infty a_n \cdot (\varpi^k \partial)^n$ un opérateur différentiel de $\Dkq(U)$ et $V \subset U$ un ouvert.
Alors $P$ est inversible dans $\F_{k , r}(V)$ si et seulement si
\begin{enumerate}
\item
$\Nb(P) = \nb(P) = d$,
\item
$a_d$ est inversible dans $\O_{\X , \Q}(V)$,
\item
$ \forall n \in \{ 0 , \dots , d-1 \}$, $|a_n| \cdot p^{(k-r)(d-n)} < |a_d|$.
\end{enumerate}
\end{prop}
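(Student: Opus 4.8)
Le plan est de traiter s\'epar\'ement les deux implications en se ramenant \`a une factorisation de $P$. Pour la n\'ecessit\'e de (1) et (2), on utilise l'injection $\gamma_{k,r} : \F_{k,r}(V) \hookrightarrow \E_k(V)$ : si $P$ est inversible dans $\F_{k,r}(V)$, son image l'est dans $\E_k(V)$, et la proposition \ref{propekinv} implique aussit\^ot que $P$ poss\`ede un unique coefficient de norme spectrale maximale, n\'ecessairement celui d'ordre $q = \Nb(P)$, inversible dans $\O_{\X,\Q}(V)$ ; ceci fournit (1) et (2). Comme $\partial$ est inversible dans $\F_{k,r}(V)$, l'op\'erateur $(\varpi^k \partial)^q$ l'est aussi, et $P$ est inversible si et seulement si $P' := P \cdot (\varpi^k \partial)^{-q} = a_q (1 + R)$ l'est, avec
\[ R = \sum_{n \geq 1} \frac{a_{n+q}}{a_q} (\varpi^k \partial)^n + \sum_{j=1}^{q} \frac{a_{q-j}}{a_q} \varpi^{-(k-r)j} (\varpi^r \partial)^{-j} =: R_+ + R_- . \]
Puisque $a_q$ est dominant, on a toujours $|R_+|_k < 1$, et la proposition \ref{prop4.1.23} donne $\|R_-\|_{k,r} = \max_{1 \leq j \leq q} |a_{q-j}| \, |a_q|^{-1} p^{(k-r)j}$ ; la condition (3) \'equivaut donc exactement \`a $\|R\|_{k,r} = \max(|R_+|_k, \|R_-\|_{k,r}) < 1$.

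La suffisance de (1), (2) et (3) en d\'ecoule imm\'ediatement : si $\|R\|_{k,r} < 1$, la s\'erie $\sum_{j \geq 0} (-R)^j$ converge dans l'alg\`ebre de Banach $\F_{k,r}(V)$ et inverse $1 + R$, de sorte que $P^{-1} = (\varpi^k \partial)^{-q} (1+R)^{-1} a_q^{-1} \in \F_{k,r}(V)$.

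Le point d\'elicat est la n\'ecessit\'e de (3), o\`u l'obstacle est que $\|\cdot\|_{k,r}$ n'est que sous-multiplicative : on ne peut pas d\'eduire la non-inversibilit\'e de la seule in\'egalit\'e $\|R\|_{k,r} \geq 1$. On suppose (1) et (2) acquises et (3) en d\'efaut, soit $\|R_-\|_{k,r} \geq 1$ (puisque $|R_+|_k < 1$ impose $\|R\|_{k,r} = \|R_-\|_{k,r}$), et on raisonne par l'absurde. Si $1 + R$ \'etait inversible dans $\F_{k,r}(V)$, l'unicit\'e de l'inverse et l'injectivit\'e de $\gamma_{k,r}$ forceraient cet inverse \`a co\"\incider avec l'inverse calcul\'e dans $\E_k(V)$ --- lequel existe toujours car $|R|_k < 1$ dans $\E_k(V)$ ---, \`a savoir $(1+R)^{-1} = \sum_{j \geq 0} (-R)^j$. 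En d\'eveloppant cet \'el\'ement dans la base de $\E_k(V)$, $(1+R)^{-1} = \sum_{m \in \Z} t_m (\varpi^k \partial)^m$, son appartenance \`a $\F_{k,r}(V)$ \'equivaut \`a la d\'ecroissance renforc\'ee $|t_m| \, p^{(k-r)|m|} \to 0$ quand $m \to -\infty$. Le plan est d'exploiter la multiplicativit\'e de $\|\cdot\|_{k,r}$ sur les produits d'\'el\'ements purement n\'egatifs (\'etablie apr\`es la proposition \ref{prop4.1.23}), qui donne $\|R_-^N\|_{k,r} = \|R_-\|_{k,r}^N \geq 1$ pour tout $N$, chaque $R_-^N$ atteignant sa norme en une puissance n\'egative $m_N \to -\infty$. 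Il resterait \`a montrer que ces contributions extr\^emales ne sont pas annul\'ees par les autres mots de $\sum_j (-R)^j$, ce qui donnerait $|t_{m_N}| p^{(k-r)|m_N|} \geq 1$ pour une infinit\'e d'indices $m_N \to -\infty$, en contradiction avec $(1+R)^{-1} \in \F_{k,r}(V)$. C'est ici que se concentre toute la difficult\'e : lorsque $\|R_-\|_{k,r} \geq 1$, des mots plus longs m\^elant $R_+$ et $R_-$ peuvent a priori atteindre la m\^eme puissance avec une norme comparable, et exclure ces compensations exige un contr\^ole fin de l'expansion non commutative --- par exemple le calcul explicite du coefficient dominant de chaque $R_-^N$, ou une factorisation de Newton de $P$ selon les pentes isolant un facteur de pente plus plate que $-(k-r)$, non inversible dans $\F_{k,r}(V)$.
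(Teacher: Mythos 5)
Votre mise en place co\"{\i}ncide exactement avec celle de l'article : la n\'ecessit\'e de (1) et (2) s'obtient en passant \`a $\E_k(V)$ par l'injection $\F_{k,r}(V) \hookrightarrow \E_k(V)$ et la proposition \ref{propekinv} ; la factorisation $P = a_q(1+R)(\varpi^k\partial)^q$ avec $R = R_+ + R_-$ et le calcul $\|R_-\|_{k,r} = \max_{1\leq j\leq q} |a_{q-j}|\,|a_q|^{-1}\,p^{(k-r)j}$ (proposition \ref{prop4.1.23}) ram\`enent bien la condition (3) \`a l'in\'egalit\'e $\|R\|_{k,r}<1$ ; et la suffisance par s\'erie g\'eom\'etrique dans l'alg\`ebre de Banach $\F_{k,r}(V)$ est correcte.

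Le trou est la n\'ecessit\'e de (3), que vous laissez explicitement ouverte : votre strat\'egie coefficient par coefficient (montrer que les coefficients $t_m$ de $(1+R)^{-1}$, calcul\'es dans $\E_k(V)$, violent la condition de d\'ecroissance caract\'erisant $\F_{k,r}(V)$) bute sur les compensations possibles entre mots, et vous ne les excluez pas. L'id\'ee qui vous manque, et qui est celle de l'article, est de raisonner non pas sur les coefficients mais sur la norme des puissances enti\`eres $(R_-+R_+)^i$ de la s\'erie g\'eom\'etrique. Dans le d\'eveloppement de $(R_-+R_+)^i$, tout mot contenant au moins un facteur $R_+$ a, par sous-multiplicativit\'e et puisque $\|R_+\|_{k,r} < 1 \leq \|R_-\|_{k,r}$, une norme major\'ee par $\|R_+\|_{k,r}\,\|R_-\|_{k,r}^{i-1}$, donc \emph{strictement} inf\'erieure \`a $\|R_-^i\|_{k,r} = \|R_-\|_{k,r}^i$ (la multiplicativit\'e sur les \'el\'ements purement n\'egatifs que vous aviez vous-m\^eme identifi\'ee). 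L'in\'egalit\'e ultram\'etrique donne alors l'\'egalit\'e exacte $\|(R_-+R_+)^i\|_{k,r} = \|R_-\|_{k,r}^i \geq 1$ : les compensations que vous redoutez sont impossibles au niveau des normes, car une somme de termes de norme strictement plus petite ne peut pas abaisser la norme du terme dominant $R_-^i$. Le terme g\'en\'eral de la s\'erie d\'efinissant $P^{-1}$ ne tend donc pas vers z\'ero, la s\'erie diverge dans $(\F_{k,r}(V),\|\cdot\|_{k,r})$, et l'article en tire la contradiction avec l'inversibilit\'e de $P$ (il d\'etaille le cas $\|R_-\|_{k,r}=1$, le cas $\|R_-\|_{k,r}>1$ \'etant analogue).

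Notez enfin que votre blocage vient de ce que vous visez une conclusion plus forte que celle que l'article \'etablit : l'article s'arr\^ete \`a la divergence de la s\'erie dans la norme $\|\cdot\|_{k,r}$ et tient pour imm\'ediate l'incompatibilit\'e de cette divergence avec $P^{-1} \in \F_{k,r}(V)$, sans mener l'analyse coefficient par coefficient que vous tentez. Pour compl\'eter votre r\'edaction, le plus court est donc de substituer aux coefficients $t_m$ les normes des puissances $(R_-+R_+)^i$ et d'invoquer l'argument ultram\'etrique ci-dessus, plut\^ot que de chercher \`a contr\^oler l'expansion non commutative mot par mot.
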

\begin{proof}
Si $P$ vérifie ces trois conditions, alors $P$ est inversible dans le microlocalisé $\F_{k ,r}(V)$.
Réciproquement, si $P$ est inversible dans $\F_{k , r}(V)$, alors $P$ vérifie les deux premières conditions.
En effet, $P$ est à fortiori inversible dans le localisé $\E_k(V)$.
Il reste à établir que $P$ vérifie le point 3. On rappelle que l'inverse $P^{-1}$ de $P$ dans $\E_k(V)$ s'écrit
\[ P^{-1} = (\varpi^k\partial)^{-d} \cdot  \sum_{i \in \N} \left(-\underbrace{\sum_{n = 0}^{d-1} \frac{a_n}{a_d} \cdot (\varpi^k \partial)^{n-d}}_\alpha - \underbrace{\sum_{n \geq 1} \frac{a_{n+d}}{a_d} \cdot (\varpi^k \partial)^n}_\beta \right)^i  \cdot a_d^{-1} .\]
Il a été vu que $\| \beta \|_{k , r} < 1$ et que $\| \alpha + \beta \|_{k , r} = \max\{ \| \alpha \|_{k , r} , \| \beta \|_{k , r}\}$.
On doit démontrer que $\| \alpha \|_{k ,r} < 1$.
Comme $\alpha$ ne contient que des puissances strictement négatives de la d\'erivation $\partial$, la norme $\| \cdot \|_{k ,r}$ est multiplicative pour les puissances de $\alpha$, ie $\|\alpha^i \|_{k , r} = |\alpha^i|_r = |\alpha|_r ^i$.
De m\^eme, $\| \beta^ i \|_{k , r} = | \beta |_k ^ i < 1$.
On suppose que $\| \alpha \|_{k , r} = 1$.
On montre alors que la somme définissant $P^{-1}$ ne converge pas dans $\F_{k , r}(U)$.
Comme $\| \alpha \|_{k , r} = |\alpha|_r = 1$,  $\| \alpha^i \|_{k , r} =|\alpha|_r ^i = 1$.
Puisque la norme $\| \cdot \|_{k , r}$ de $\F_{k , r}(U)$ est sous-multiplicative, on a $(\alpha + \beta)^ i = \alpha^ i + P_i$ avec $\| P_i \|_{k , r} < 1$.
On v\'erifie que la suite $\left\{ \sum_{i=0}^n (\alpha + \beta)^i \right\}_{n \in \N}$ n'est pas de Cauchy dans l'algèbre $\F_{k , r}(U)$ sous l'hypoth\`ese $\| \alpha \|_{k , r} = 1$.
En effet, la différence de deux termes consécutifs est donn\'ee par $\sum_{i=0}^{n+1} (\alpha^i + P_i) - \sum_{i=0}^n (\alpha^i + P_i)  = \alpha^{n+1} + P_{n +1}$.
Puisque $\| P_{n+1} \|_{k , r} < 1$ et $\| \alpha^{n+1} \|_{k , r} = 1$, on déduit de la proposition \ref{prop4.1.23} (la norme est le maximum des normes des coefficients) que $\| \alpha^{n +1} + P_{n+1} \|_{k , r} = 1$.
Ainsi, la somme des puissances de $\alpha + \beta$ diverge dans l'algèbre $\F_{k , r}(U)$. Lorsque $\| \alpha \|_{k , r} > 1$, on vérifie de m\^eme que la somme définissant $P^{-1}$ diverge dans l'algèbre de Banach $(\F_{k , r}(U) , \| \cdot \|_{k , r})$.
Il en résulte que si $P^{-1} \in \F_{k , r}(V)$, alors $\| \alpha \|_{k , r} < 1$. L'implication directe de la proposition est donc démontrée.
\end{proof}

\begin{remark}
Les points 1 et 3 de la proposition ne dependent pas de l'ouvert $V$ choisi.
Autrement dit, si $P = \sum_{n = 0}^\infty a_n \cdot (\varpi^k \partial)^n \in \Dkq(U)$ vérifie $\Nb(P) = \nb(P) = d$ et $|a_n| \cdot p^{(k-r)(d-n)} < |a_d|$ pour tout $n \in \{ 0 , \dots , d-1 \}$, alors $P$ est inversible dans $\F_{k , r}(V)$ pour tout ouvert $V \subset U$ ne contenant aucun z\'ero de $a_d$.
\end{remark}

\section{Microlocalisation de $\Di$}\label{section4}

On introduit maintenant le microlocalisé $\Fi(U)$ de $\Di(U)$ vérifiant la condition d'inversibilité souhaitée : un opérateur différentiel $P$ de $\Di(U)$ est inversible dans $\Fi(U)$ si et seulement si $P$ est un opérateur fini dont le coefficient dominant est inversible.

\subsection{Les microlocalisés $\Fir$}\label{section4.1}

On consid\`ere dans cette partie les faisceaux $\Fir := \varprojlim_{ k\geq r} \F_{k , r}$ pour $r \geq 1$ donn\'e puis on détermine les opérateurs différentiels de $\Di(U)$ inversibles dans $\Fir(U)$.
La limite projective est prise pour les morphismes de faisceaux de $K$-algèbres $\F_{k+1 ,r} \to \F_{k,r}$ induis localement par les inclusions $\varepsilon_{k,r} : \F_{k+1 , r}(U)  \hookrightarrow \Fkr$.
Contrairement à ce qu'il se passe pour $\F_{k , r}(U)$, les éléments de $\Di(U) \cap (\Fi(U))^\times$ seront des opérateurs finis dont les coefficients dominants devront toujours être inversibles.
Il faudra de plus imposer certaines inégalités entre les normes des coefficients des opérateurs inversibles.
Le niveau de congruence $k$ sera toujours suppos\'e sup\'erieur ou égal \`a $r \geq 1$.
La proposition suivante résume les principales propriétés du faisceau $\Fir$.

\begin{prop}
Les sections du faisceau $\Fir := \varprojlim_{ k\geq r} \F_{k , r}$ sur les ouverts $U$ de $\mathcal{U}$ sont des algèbres de Frechet-Stein dont la topologie est induite par les normes $\| \cdot\|_{k,r}$ des $K$-algèbres de Banach $\Fkr$. Par ailleurs,
\[ \Fir(U) =\displaystyle \left\{ P + \sum_{n = 1}^\infty  a_{-n} \cdot (\varpi^r\partial)^{-n} :  \, P \in \Di(U) , ~ a_{-n} \in \O_{\X, \Q} (U),~~  \lim_{n \to \infty} |a_{-n}|  =0 \right\} .\]
\end{prop}
\begin{proof}
Pour rappel, tout élément $S$ de $\Fkr$ s'écrit uniquement sous la forme
\[ S =  \sum_{n = 0}^\infty a_n \cdot (\varpi^k \partial)^n + \sum_{n = 1}^{\infty} a_{-n} \cdot (\varpi^r \partial)^{-n} ~~\mathrm{avec} ~~ a_n \underset{n \to \pm \infty}{\longrightarrow} 0 .\]
Les morphismes d'algèbres $\varepsilon_{k,r} : \F_{k+1 , r}(U)  \hookrightarrow \Fkr$ sont injectifs et vérifient pour toute section $S$ de $\F_{k+1 , r}(U) $ l'inégalité $\| \varepsilon_{k,r}(S) \|_{k,r} \leq \| S \|_{k+1 , r}$.
Par ailleurs, $\Fir(U) = \varprojlim_{k \geq r} \Fkr = \bigcap_{k \geq r} \Fkr$ est une $K$-algèbre contenant l'algèbre de Fréchet-Stein $\Di(U)$.
Cette inclusion correspond au morphisme continu $\Di(U) \hookrightarrow \F_{\infty , r}(U)$ induit par passage à la limite projective sur $k$ dans les diagrammes commutatifs de $K$-algèbres topologiques
\[ \xymatrix@R=3pc @C=3pc{ \widehat{\mathcal{D}}^{(0)}_{\X , k+1 , \Q}(U) \ar@{^{(}->}[r]\ar@{^{(}->}[d] & \Dkq(U) \ar@{^{(}->}[d] \\ \F_{k+1 , r}(U) \ar@{^{(}->}[r] & \F_{k , r}(U) \, .} \]
La proposition \ref{prop2.28} implique que $\Fir(U)$ est une algèbre de Fréchet-Stein.
La description des éléments de $\Fir(U)$ découle de tout cela.
\end{proof}

\subsubsection*{Opérateurs de $\Di(U)$ inversibles dans le microlocalisé $\Fir(U)$}

Soit $P = \sum_{n=0}^d a_n \cdot \partial^n$ un opérateur différentiel fini d'ordre $d$ de $\Di(U)$.
Pour $k$ suffisamment grand, $\Nb(P) = \nb(P) = d$ d'après le lemme \ref{lemmedeg}.
Ainsi, $P$ est inversible dans le localisé $\Ek$ si et seulement si son coefficient dominant $a_d$ est inversible dans la $K$-algèbre affinoïde $\O_{\X , \Q}(U)$.
On déduit de la proposition \ref{prop2.22} que $P$ est inversible dans le microlocalisé $\Fkr$ si de plus pour tout $n\in \{ 0 , \dots , d-1 \}$, $|a_n|  < |a_d| \cdot p^{r(d-n)}$.
Cette condition ne dépendant pas de $k$, on peut expliciter les opérateurs différentiels finis de $\Di(U)$ inversibles dans le microlocalisé $\Fir(U)$ : un opérateur fini $P = \sum_{n=0}^d a_n \cdot \partial^n$ d'ordre $d$ appartient à $\Fir(U)^\times$ si et seulement si
\begin{enumerate}
\item
$a_d \in \O_{\X , \Q}(U)^\times$,
\item
$\forall n\in \{ 0 , \dots , d-1 \}, ~~  |a_n|  < |a_d| \cdot p^{r(d-n)}$.
\end{enumerate}
Cette condition est vérifiée par exemple pour les opérateurs dominants : $|a_n | \leq |a_d|$ pour tout entier $n \in \{ 0 , \dots , d\} $.
Réciproquement, on va démontrer que tout opérateur différentiel de $\Di(U)$ inversible dans le microlocalisé $\Fir(U)$ est de cette forme.
Soit maintenant $P = \sum_{n =0}^\infty a_n \cdot \partial^n$ un opérateur différentiel infini de $\Di(U)$. Il n'est pas vrai en général qu'à partir d'un certain niveau de congruence $k$, l'égalité $\Nb(P) = \nb(P)$ soit vérifiée.
Autrement dit, $P$ n'est pas forcément inversible dans le localisé $\Ek$ pour $k$ suffisamment grand dès que son coefficient d'ordre $\Nb(P)$ est inversible.
\begin{example}\label{ex3.6}~
\begin{enumerate}
\item
Soit $P = \prod_{n \geq 1} (1 - \varpi^n \partial) \in \Di(U)$. Alors $\Nb(P) = k$ et $\nb(P) = k-1$ pour tout $k \geq 1$.
Autrement dit, $P$ n'est jamais inversible dans $\Ek$ dès que $k \geq 1$.
\item
Soit $P = \sum_{n \geq 0} \varpi^{n^2} \cdot \partial^n \in \Di(U)$. On a $\Nb(P) = \nb(P) = k/2$ si $k$ est paire. Sinon, $\Nb(P) = \frac{k+1}{2}$ et $\nb(P) = \frac{k-1}{2}$.
Ainsi, $P$ est inversible dans l'algèbre $\Ek$ si et seulement si le niveau de congruence $k$ est pair.
\end{enumerate}
\end{example}

Comme nous disposons de morphismes de transition injectifs $\F_{k+1 ,r}(U) \hookrightarrow \Fkr$, si $P$ est inversible dans le microlocalisé $\Fkr$, alors $P$ est inversible dans $\F_{\ell ,r}(U)$ pour tout entier $\ell \in \{ 0 , \dots , k \}$.
Ainsi, ou bien $P$ est inversible dans tous les microlocalisés $\Fkr$, ou bien $P$ n'est plus inversible dans les microlocalisés $\Fkr$ à partir d'un certain niveau de congruence $k$.
Puisque $\Fkr$ est une sous-algèbre de $\Ek$, les deux opérateurs différentiels de l'exemple \ref{ex3.6} ne sont pas inversibles dans le microlocalisé $\Fkr$ quelque soit le niveau de congruence $k$.
Pour r\'esumer, si $P =  \sum_{n\in \N} a_n \cdot  \partial^n$ est un opérateur de $\Di(U)$ inversible dans le microlocalisé $\Fir(U)$, alors $P$ est inversible dans toutes les algèbres $\Fkr$.
R\'eciproquement, si $P$ est inversible dans chaque alg\`ebre $\Fkr$, alors $P$ est dans $\Fir(U)^\times = \cap_{k \geq r} \Fkr^\times$.
On s'intéresse donc aux opérateurs différentiels $P = \sum_{n=0}^\infty a_n \cdot \partial^n$ de $\Di(U)$ vérifiant pour un niveau de congruence $k$ la condition
\begin{equation}\label{eq1}
 d_k = \Nb(f) = \nb(f) ~~ \mathrm{et}~~
\forall n\in \{ 0 , \dots , d_k-1 \}, ~~  |a_n|  < |a_{d_k}| \cdot p^{r(d_k-n)}
\end{equation}
On démontre dans ce qui suit que si $P$ vérifie cette condition pour une infinité d'entiers naturels $k$, alors $P$ est un opérateur différentiel fini.
Les éléments de $\Di(U)$ inversibles dans le microlocalisé $\Fir(U)$ seront donc des opérateurs différentiels finis.

\vspace{0.4cm}

On note $U = \Spf A'$ et $A = A' \otimes_\V K$ la $K$-agèbre affinoïde associ\'ee. La norme spectrale $| \cdot |$ sur $A$ est multiplicative. En effet, la courbe formelle $\X$ est connexe et lisse, donc $A$ est intègre.
On fixe une clôture algébrique $\overline{K}$ de $K$. Cette dernière est munie d'une valuation, extension de la valuation de $K$, que l'on note encore $v$.
Soit $P = \sum_{n\in \N} a_n \cdot \partial^n$ un opérateur différentiel de $\Di(U)$. Les coefficients $a_n$ de $P$ sont des éléments de $A$ vérifiant la condition de surconvergence : $\forall \mu \in \R, ~~ \lim_{n \to \infty} |a_n \cdot \varpi^{- \mu n}| = 0$.
Pour $\mu \in v(\overline{K})$, on pose
\begin{enumerate}
\item[$\bullet$]
$|P|_\mu := \max_{ \ell\in \N} |a_\ell \cdot \varpi^{-\mu\ell} |$,
\item[$\bullet$]
$\Nm(P) := \max \left\{ n \in \N : |a_n \cdot \varpi^{-\mu n} | = |P|_\mu \right\}$,
\item[$\bullet$]
$\nm(P) := \min \left\{ n \in \N : |a_n \cdot  \varpi^{-\mu n} | = |P|_\mu \right\}$.
\end{enumerate}
Lorsque $\mu = k \in \N$, on retrouve les fonctions définies précédemment.
La norme spectrale $| \cdot |$ est à valeurs dans $| \overline{K} |$.
Pour tout élément $a$ de l'algèbre affinoïde $\O_{\X , \Q}(U)$, on note $v(a)$ l'élément $\mu$ de $v(\overline{K})$ pour lequel $|a| = |\varpi|^{-\mu}$.

\begin{definition}
Soit $P = \sum_{n\in \N} a_n \cdot \partial^n \in \Di(U)$. On définit son polygone de Newton comme étant l'enveloppe convexe des points $A_n =(n , v(a_n))$ de $\R^2$.
\end{definition}

Si $P$ est un opérateur différentiel fini, alors son polygone de Newton n'a qu'un nombre fini de pentes.
Dans le cas contraire, la suite des pentes (rangées dans l'ordre des indices des coefficients) est strictement croissante et diverge vers $+\infty$.
La proposition suivante montre que les pentes du polygone de Newton sont étroitement liées aux fonctions $\overline{N}$ et $\overline{n}$ de $P$.

\begin{prop}\label{prop3.7}
Soit $P = \sum_{n\in \N} a_n \cdot \partial^n \in \Di(U)$ et $\mu \in v(\overline{K})$.
\begin{enumerate}
\item
Si $\mu$ n'est pas une pente du polygone de Newton de $P$, alors $\Nm(P) = \nm(P)$.
\item
Si $\mu$ est une pente du polygone de Newton de $P$, alors $\nm(P) < \Nm(P)$.
\end{enumerate}
\end{prop}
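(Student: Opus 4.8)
The plan is to reduce everything to the behaviour of the affine function $n \mapsto v(a_n) - \mu n$ and to read off its minimising set from the Newton polygon, exploiting the convexity at the very end.

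First I would rewrite the norm in valuative terms. Since $|a_n \cdot \varpi^{-\mu n}| = |\varpi|^{\,v(a_n) - \mu n}$ and $|\varpi| < 1$, maximising $|a_n \cdot \varpi^{-\mu n}|$ amounts to minimising $v(a_n) - \mu n$. Thus $|P|_\mu = |\varpi|^{m_\mu}$ with $m_\mu := \min_{n \in \N}\,(v(a_n) - \mu n)$, and the set of indices realising $|P|_\mu$ is exactly
\[ I_\mu := \{ n \in \N : v(a_n) - \mu n = m_\mu \}, \]
so that $\nm(P) = \min I_\mu$ and $\Nm(P) = \max I_\mu$ by definition. The overconvergence condition defining $\Di(U)$ gives $a_n \cdot \varpi^{-\mu n} \to 0$, i.e.\ $v(a_n) - \mu n \to +\infty$; this guarantees that the minimum $m_\mu$ is attained and that $I_\mu$ is a finite nonempty set, so both $\nm(P)$ and $\Nm(P)$ are well defined.

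Next I would give $I_\mu$ a geometric meaning. Consider the line $L_\mu$ of equation $y = \mu x + m_\mu$. By definition of $m_\mu$ one has $v(a_n) \geq \mu n + m_\mu$ for every $n$, with equality precisely when $n \in I_\mu$. Hence every point $(n , v(a_n))$ lies on or above $L_\mu$, so $L_\mu$ is a supporting line of slope $\mu$ (from below) for the Newton polygon of $P$, meeting the lower boundary of the convex hull exactly at the points $\{ (n , v(a_n)) : n \in I_\mu \}$. The whole statement then becomes the elementary equivalence: $\mu$ is a slope of the Newton polygon if and only if $L_\mu$ touches it along an edge, that is, if and only if $I_\mu$ contains at least two indices.

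For point 2, if $\mu$ is a slope of the Newton polygon then one edge of the lower convex hull has slope $\mu$; its two distinct endpoints are vertices lying on $L_\mu$, so both belong to $I_\mu$, whence $\nm(P) < \Nm(P)$. For point 1, I argue contrapositively: suppose $I_\mu$ contained two indices $n_1 < n_2$. Both points $(n_1 , v(a_{n_1}))$ and $(n_2 , v(a_{n_2}))$ lie on $L_\mu$, and since all the points $(n , v(a_n))$ lie above $L_\mu$, the chord joining them lies on the supporting line and therefore forms part of the lower boundary of the hull; this boundary segment, of slope $\mu$, is contained in an edge of the Newton polygon, so $\mu$ would be a slope, contradicting the hypothesis. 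Hence $I_\mu$ is a singleton and $\Nm(P) = \nm(P)$. The routine verifications are the passage from norms to valuations and the finiteness of $I_\mu$; the only point requiring genuine care is the convexity argument in point 1, namely that two distinct contact points of a supporting line force an actual edge of slope $\mu$ on the lower boundary. This is where I expect the main (though still elementary) difficulty to lie, and it is cleanest to phrase directly in terms of the supporting line $L_\mu$ rather than by manipulating the list of vertices.
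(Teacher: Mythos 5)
Your proof is correct and follows essentially the same route as the paper's: both arguments identify the indices realising $|P|_\mu$ with the contact points of the supporting line of slope $\mu$ under the Newton polygon, prove point 1 by showing that two such contact points would force an edge of slope $\mu$ (contradiction), and prove point 2 by reading off the two distinct endpoints of an edge of slope $\mu$. Your write-up is slightly more careful on two routine points the paper leaves implicit (attainment and finiteness of the minimising set via overconvergence, and the explicit supporting-line formulation), but the mathematical content is the same.
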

\begin{proof}
On suppose tout d'abord que $\mu$ n'est pas une pente du polygone de Newton de $P$. La suite $(|a_n| \cdot p ^{\mu n})_n$ converge vers zéro.
La condition $\Nm(P) = \nm(P)$ signifie exactement qu'elle admet un unique élément maximal. 
On suppose qu'il existe deux indices $i < j$ tels que
\begin{equation}\label{eq4.5}
|P|_\mu = \max_{n\in\N} \{ |a_n |  \cdot p ^{\mu n} \}= |a_i| \cdot p ^{\mu i} = |a_j| \cdot p ^{\mu j}
\end{equation}
On note $\alpha$ la valuation commune des éléments $a_i \cdot \varpi^{-\mu i}$ et $a_j \cdot \varpi^{-\mu j}$.
Les points $A_i$ et $A_j$ appartiennent à la même droite d'équation $Y = \mu \cdot X + \alpha$.
L'égalité \ref{eq4.5} signifie que $\alpha$ minimalise la valuation des termes $a_n \cdot \varpi^{-\mu n}$.
La minimalité de $\alpha$ implique que $[A_i , A_j]$ est un côté du polygone de Newton de $P$. Mais ce segment à pour pente $\mu$ !
Cela contredit l'hypothèse de départ. La suite $(|a_n| \cdot p ^{\mu n})_n$ a donc un unique élément maximal. Autrement dit, $\Nm(P) = \nm(P)$.
Soit maintenant $\mu$ une pente du polygone de Newton de $P$. Alors le polygone de Newton admet un segment $[ A_i , A_j]$ de pente $\mu$ pour deux indices $i$ et $j$ distincts.
Les points $A_i$ et $A_j$ appartiennent à une même droite d'équation $Y = \mu \cdot X + \beta$. Cela se traduit par $|a_i| \cdot p ^{\mu i} = |a_j| \cdot p ^{\mu j} = |P|_\mu$ au niveau des normes.
 En particulier, $\nm(P) < \Nm(P)$.
\end{proof}

Comme on peut le voir dans la proposition suivante, la condition \ref{eq1} pour un niveau de congruence $k$ donn\'e a d'importantes conséquences sur les valeurs possibles des pentes du polygone de Newton.

\begin{prop}\label{cor3.8}
Soit $P = \sum_{n\in \N} a_n \cdot \partial^n \in \Di(U)$. Si $P$ vérifie la condition \ref{eq1} pour un certain niveau de congruence $k \geq r$, alors le polygone de Newton de $P$ n'admet pas de pente appartenant au segment $[r , k]$.
\end{prop}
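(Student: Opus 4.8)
The plan is to reduce the statement to the slope criterion of Proposition \ref{prop3.7} and then to translate the two parts of condition \ref{eq1} into inequalities on the valuations $v(a_n)$ that can be propagated across the whole interval $[r,k]$ by a monotonicity argument in $\mu$. Throughout I write $q := q_k$ and use that, since $P \in \Di(U)$ is overconvergent, $|a_n|\,p^{\mu n}\to 0$, so the maximizing set defining $\Nm(P)$ and $\nm(P)$ is finite and these quantities are well defined.

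First I would recall that, by Proposition \ref{prop3.7}, a value $\mu \in \overline{K}$ is a slope of the Newton polygon of $P$ if and only if $\nm(P) < \Nm(P)$. Hence it suffices to show that $\Nm(P) = \nm(P)$ for every $\mu \in [r,k]$. Writing $|a_n| = p^{-v(a_n)}$ one has $|a_n\,\varpi^{-\mu n}| = p^{\mu n - v(a_n)}$, so the equality $\Nm(P) = \nm(P)$ is precisely the assertion that the affine function $n \mapsto v(a_n) - \mu n$ attains its minimum at a single index. I will show that this index is $q$ for every $\mu \in [r,k]$.

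Next I would rewrite the hypotheses. The equality $\Nb(P) = \nb(P) = q$ at level $k$ says that $n\mapsto v(a_n) - kn$ has a strict minimum at $n=q$, that is $v(a_n) - v(a_q) > k(n-q)$ for all $n \neq q$; and the inequalities $|a_n| < |a_q|\cdot p^{r(q-n)}$ for $n<q$ become, after passing to valuations, $v(a_n) - v(a_q) > r(n-q)$ for $0 \le n < q$. For a fixed index $n$ I then consider the affine function
\[ g_n(\mu) := \bigl(v(a_n) - \mu n\bigr) - \bigl(v(a_q) - \mu q\bigr) = \bigl(v(a_n) - v(a_q)\bigr) - \mu(n-q), \]
whose slope in $\mu$ is $-(n-q)$; the goal is $g_n(\mu) > 0$ for all $n \neq q$ and all $\mu \in [r,k]$. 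For $n > q$ the slope is negative, so $g_n$ is decreasing; since $g_n(k) > 0$ by the level-$k$ hypothesis, $g_n(\mu) > 0$ for all $\mu \le k$, in particular on $[r,k]$. For $n < q$ the slope $q-n$ is positive, so $g_n$ is increasing; since $g_n(r) > 0$ by the inequality part of \ref{eq1}, $g_n(\mu) > 0$ for all $\mu \ge r$, in particular on $[r,k]$. Combining both cases, $q$ is the unique minimizer of $n\mapsto v(a_n)-\mu n$ for every $\mu \in [r,k]$, whence $\Nm(P) = \nm(P) = q$ and, by Proposition \ref{prop3.7}, no $\mu \in [r,k]$ is a slope.

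I expect the only genuinely delicate point to be the range $n > q$, where condition \ref{eq1} gives no direct control on the coefficients: here one cannot argue from the inequality part at level $r$ and must instead feed in the level-$k$ uniqueness $\Nb(P) = \nb(P)$, exploiting that lowering $\mu$ only pushes the high-order points $A_n$ further above $A_q$. The low-order range $n < q$ is symmetric but uses the interval's opposite endpoint $\mu = r$, which is exactly what the extra inequalities in \ref{eq1} are designed to supply; that the two endpoint conditions together cover the full segment is the conceptual heart of the argument.
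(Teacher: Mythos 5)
Your proof is correct. It shares with the paper the key reduction to Proposition \ref{prop3.7} (no slope at $\mu$ $\Leftrightarrow$ $\Nm(P)=\nm(P)$), but the argument built on top of it is genuinely different. The paper proceeds by contradiction: a slope $\mu\in[r,k]$ yields, via Proposition \ref{prop3.7}, two indices $n<m$ with $|a_n|p^{\mu n}=|a_m|p^{\mu m}=|P|_\mu$; choosing $m$ maximal, invoking Lemma \ref{lemmedeg} to guarantee $m\leq q_k$, and using the maximality of $m$, it derives $|a_n|> p^{\mu(q_k-n)}|a_{q_k}|\geq p^{r(q_k-n)}|a_{q_k}|$, contradicting condition \ref{eq1}. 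You argue directly: for each $n\neq q$ the function $g_n(\mu)=(v(a_n)-v(a_q))-\mu(n-q)$ is affine in $\mu$, and strict positivity at one endpoint (at $\mu=k$ for $n>q$, supplied by $\Nb(P)=\nb(P)=q$; at $\mu=r$ for $n<q$, supplied by the inequalities of \ref{eq1}) propagates to all of $[r,k]$ by monotonicity. Your route buys three things: it dispenses with Lemma \ref{lemmedeg} and with the maximality trick; it proves the slightly stronger statement that the maximizing index is \emph{constant} equal to $q_k$ on the whole interval, i.e.\ $\Nm(P)=\nm(P)=q_k$ for every $\mu\in[r,k]$, rather than just the absence of slopes; and it avoids the edge case $m=q_k$ of the paper's proof, where the strict inequality claimed ``par maximalit\'e de $m$'' degenerates to an equality and has to be patched (harmlessly, since the inequality in \ref{eq1} is strict). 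The paper's argument is shorter on the page, but yours is more self-contained and makes transparent exactly how the two halves of condition \ref{eq1} control the two halves of the index range.
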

\begin{proof}
Soit $\mu$ une pente du polygone de Newton de $P$ appartenant à l'intervalle $[r , k]$.
On sait d'après la proposition \ref{prop3.7} qu'il existe deux indices distincts $n < m$ tels que $ |P|_\mu = |a_n| \cdot p^{\mu n} = |a_m| \cdot p^{\mu m}$.
On suppose $m$ maximal. L'hypothèse \ref{eq1} implique que $\Nb(P) = \nb(P)$. On note $d_k$ cet entier.
Le lemme \ref{lemmedeg} nous assure que $m \leq d_k$. Par maximalité de $m$, on a $|a_m| \cdot p^{\mu m} > |a_{d_k}| \cdot p^{\mu d_k}$. On en déduit que
\[ |a_n| = p^{\mu(m-n)} \cdot |a_m| > p^{\mu(m-n)} \cdot p^{\mu (d_k-m)} \cdot |a_{d_k}| = p^{\mu (d_k-n)} \cdot |a_{d_k}| . \]
On rappelle que $n < m \leq d_k$. Le fait que $\mu$ soit supérieur ou égal à $r$ contredit la seconde partie de l'hypothèse \ref{eq1} : $\forall n\in \{ 0 , \dots , d_k-1 \}, ~~  |a_n|  < |a_{d_k}| \cdot p^{r(d_k-n)}$.
Ainsi, aucune pente du polygone de Newton de $P$ n'appartient au segment $[r , k]$.
\end{proof}

On en déduit qu'un \'el\'ement $P$ de $\Di(U)$ est un opérateur différentiel fini dès que $P$ vérifie l'hypothèse \ref{eq1} pour une infinit\'e de niveaux de congruences $k \geq r$.

\begin{cor}\label{cor3.9}
Soit $P = \sum_{n\in \N} a_n \cdot \partial^n \in \Di(U)$. Si $P$ vérifie la condition \ref{eq1} pour tout entier naturel $k \geq r$, alors $P$ est un opérateur différentiel fini.
\end{cor}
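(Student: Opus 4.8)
The plan is to argue by contradiction, leveraging Proposition~\ref{cor3.8} together with the structural description of the Newton polygon of an infinite operator recalled just before Proposition~\ref{prop3.7}. So I would suppose that $P = \sum_{n\in \N} a_n \cdot \partial^n$ is \emph{not} finite and derive a contradiction with the hypothesis that $P$ satisfies the condition \ref{eq1} for every integer $k \geq r$.

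Concretely, if $P$ is infinite then, as recalled above, the coefficients satisfy $v(a_n) \to +\infty$ as $n \to +\infty$ (since $a_n \to 0$), so the sequence of slopes of the Newton polygon of $P$, ordered by increasing coefficient index, is strictly increasing and diverges to $+\infty$. In particular there exists at least one slope $\mu$ of the Newton polygon with $\mu \geq r$. The core of the argument is then immediate: I would fix an integer $k$ with $k \geq \max\{r, \mu\}$, so that $\mu \in [r, k]$. Since $P$ satisfies \ref{eq1} for this particular $k$ by hypothesis, Proposition~\ref{cor3.8} applies and guarantees that the Newton polygon of $P$ admits no slope lying in the segment $[r, k]$. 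This directly contradicts the fact that $\mu \in [r, k]$ is a slope of that polygon. Hence $P$ must be finite.

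Equivalently, one may phrase this as an intersection argument: applying Proposition~\ref{cor3.8} to every $k \geq r$ shows that the set of slopes of the Newton polygon of $P$ avoids $\bigcup_{k \geq r} [r, k] = [r, +\infty)$; since an infinite operator would have slopes tending to $+\infty$, and hence slopes inside $[r, +\infty)$, only a finite operator is compatible with this. I do not expect any serious obstacle here: the corollary is essentially a packaging of Proposition~\ref{cor3.8} over all levels $k$, and the only point requiring a little care is to invoke correctly the dichotomy ``finite $\Leftrightarrow$ finitely many slopes'' versus ``infinite $\Rightarrow$ slopes diverge to $+\infty$'' established before Proposition~\ref{prop3.7}, which ensures that an infinite $P$ genuinely produces an arbitrarily large slope to feed into the contradiction.
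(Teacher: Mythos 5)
Your proposal is correct and follows essentially the same route as the paper: assuming $P$ infinite, its Newton polygon has slopes diverging to $+\infty$, while applying Proposition~\ref{cor3.8} for all $k \geq r$ excludes any slope in $[r, +\infty)$, giving the contradiction. The only cosmetic difference is that you fix a single slope $\mu$ and a single $k \geq \max\{r,\mu\}$ rather than phrasing it as the union $\bigcup_{k\geq r}[r,k]$, which is exactly the paper's argument.
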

\begin{proof}
On suppose que $P = \sum_{n=0}^\infty a_n \cdot \partial^n$ est un opérateur infini. Alors son polygone de Newton admet une infinité de pentes $\lambda_\ell$ et la suite $(\lambda_\ell)_\ell$ est strictement croissante de limite $+\infty$.
Mais la proposition \ref{cor3.8} implique que pour tout entier $k \geq r$, il n'y aucune pente dans le segment $[r , k]$.
Le polygone de Newton de $P$ ne peut donc pas admettre de pente dans l'intervalle $[r , + \infty [$.
La suite des pentes $(\lambda_\ell)_\ell$ est donc finie et $P$ est un opérateur fini.
\end{proof}

Soit $P = \sum_{n = 0}^\infty a_n \cdot \partial^n$ un opérateur diff\'erentiel de $\Di(U)$.
Si $P$ est inversible dans le microlocalisé $\Fir(U)$, alors $P$ est inversible dans $\F_{k,r}(U)$ pour tout entier $k \geq r$.
En particulier, $P$ vérifie la condition \ref{eq1} pour tout entier naturel $k$.
Le corollaire \ref{cor3.9} permet donc de décrire les éléments de $\Di(U)$ inversibles au voisinage de $x$ dans le microlocalisé $\Fir$.

\begin{theorem}\label{prop4.11}
Soit $P \in \Di(U)$ et $V \subset U$ un ouvert. Alors $P$ est inversible dans le microlocalisé $\Fir(V)$ si et seulement si
\begin{enumerate}
\item
$P = \sum_{n=0}^d a_n \cdot \partial^n$ dans $\Di(U)$,
\item
$a_d$ est inversible dans $\O_{\X , \Q}(V)$,
\item
$\forall n\in \{ 0 , \dots , d-1 \}, ~~  |a_n|  < |a_d| \cdot p^{r(d-n)}$.
\end{enumerate}
\end{theorem}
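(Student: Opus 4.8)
Le plan est d'\'etablir s\'epar\'ement les deux implications, en ramenant \`a chaque fois l'inversibilit\'e dans $\Fir(V)$ \`a l'inversibilit\'e niveau par niveau dans les alg\`ebres $\F_{k , r}(V)$, caract\'eris\'ee par la proposition \ref{prop2.22}. L'observation pr\'eliminaire commune est la suivante : comme $\Fir(V) = \bigcap_{k \geq r} \F_{k , r}(V)$ et que les morphismes de transition $\F_{k+1 , r}(V) \hookrightarrow \F_{k , r}(V)$ sont injectifs, un op\'erateur $P \in \Di(V)$ est inversible dans $\Fir(V)$ si et seulement s'il l'est dans $\F_{k , r}(V)$ pour tout $k \geq r$. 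En effet, l'unicit\'e de l'inverse dans chaque anneau et la compatibilit\'e aux inclusions forcent les inverses successifs \`a co\"{\i}ncider, d\'efinissant ainsi un \'el\'ement commun de l'intersection $\bigcap_{k \geq r} \F_{k , r}(V) = \Fir(V)$.

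Pour la r\'eciproque (les trois conditions impliquent l'inversibilit\'e), je partirais d'un op\'erateur fini $P = \sum_{n=0}^q a_n \cdot \partial^n$ v\'erifiant les conditions 2 et 3. Le point d\'ecisif est que la condition 3, $|a_n| < |a_q| \cdot p^{r(q-n)}$ pour $n < q$, est ind\'ependante de $k$ : en exprimant $P$ dans la base des $(\varpi^k \partial)^n$, de coefficients $a_n \cdot \varpi^{-kn}$, un calcul de puissances de $\varpi$ montre qu'elle co\"{\i}ncide exactement avec la troisi\`eme condition de la proposition \ref{prop2.22} au niveau $k$. De plus, comme $k \geq r$, cette in\'egalit\'e donne $|a_q| \cdot p^{kq} > |a_n| \cdot p^{kn}$ pour tout $n < q$, c'est-\`a-dire $\Nb(P) = \nb(P) = q$ \`a chaque niveau $k \geq r$. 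Les trois conditions de la proposition \ref{prop2.22} sont donc satisfaites pour tout $k \geq r$, la condition 2 fournissant l'ouvert $V$ sur lequel $a_q$ est inversible. Par suite $P$ est inversible dans $\F_{k , r}(V)$ pour tout $k \geq r$, donc dans $\Fir(V)$ d'apr\`es l'observation pr\'eliminaire.

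Pour l'implication directe (l'inversibilit\'e entra\^{\i}ne les trois conditions), je supposerais $P = \sum_{n \geq 0} a_n \cdot \partial^n$ inversible dans $\Fir(V)$, donc inversible dans chaque $\F_{k , r}(V)$. La proposition \ref{prop2.22} donne alors que $P$ satisfait la condition \ref{eq1} pour tout $k \geq r$, avec coefficient dominant inversible. C'est ici qu'intervient le c\oe ur de l'argument : le corollaire \ref{cor3.9} affirme pr\'ecis\'ement qu'un op\'erateur v\'erifiant \ref{eq1} pour tout $k \geq r$ est n\'ecessairement fini, ce qui \'etablit la condition 1. Une fois $P = \sum_{n=0}^q a_n \cdot \partial^n$ reconnu fini d'ordre $q$, le lemme \ref{lemmedeg} garantit que la suite $(\Nb(P))_k$ se stabilise \`a la valeur $q$ ; pour $k$ assez grand on a $q_k = q$, et les conditions 2 et 3 de la proposition \ref{prop2.22} redonnent textuellement les conditions 2 et 3 du th\'eor\`eme, la condition 3 \'etant ind\'ependante de $k$ comme d\'ej\`a remarqu\'e.

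Le principal obstacle est concentr\'e dans l'implication directe, et plus pr\'ecis\'ement dans le passage de l'inversibilit\'e \`a tous les niveaux $k$ \`a la finitude de $P$, qui repose sur l'analyse du polygone de Newton (proposition \ref{cor3.8}) : l'inversibilit\'e au niveau $k$ interdit toute pente du polygone de Newton dans le segment $[r , k]$, et en faisant tendre $k$ vers l'infini on exclut toute pente dans $[r , +\infty[$, ce qui est incompatible avec la suite strictement croissante de pentes divergeant vers $+\infty$ qu'aurait un op\'erateur infini. Un point technique \`a surveiller est l'ind\'ependance de l'ouvert $V$ vis-\`a-vis de $k$ : on travaille avec l'unique $V$ donn\'e par l'hypoth\`ese d'inversibilit\'e dans $\Fir(V)$, et les conditions 1 et 3 de la proposition \ref{prop2.22} \'etant des conditions locales en $x$ portant sur les normes spectrales, elles ne d\'ependent pas de $V$ ; seule la condition 2 d'inversibilit\'e de $a_q$ s'y r\'ef\`ere.
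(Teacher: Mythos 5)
Your proposal is correct and follows essentially the same route as the paper: sufficiency by applying Proposition \ref{prop2.22} level by level after noting that condition 3 is independent of $k$ (hence holds for every $k \geq r$), and necessity by combining Proposition \ref{prop2.22} (condition \ref{eq1} for every $k \geq r$) with the Newton-polygon argument of Corollary \ref{cor3.9} to force finiteness, then Lemma \ref{lemmedeg} to identify $q_k = q$. The only minor variation---deducing $\Nb(P) = \nb(P) = q$ for all $k \geq r$ directly from condition 3 rather than invoking Lemma \ref{lemmedeg} for $k$ large---is immaterial.
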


Le critère du théorème \ref{prop4.11} ne correspond pas exactement à la condition d'inversibilité souhaitée.
En effet, on désire que tout opérateur différentiel fini de $\Di(U)$ soit inversible dans le microlocalisé dés que son coefficient dominant est inversible.
La condition 3 du theorème \ref{prop4.11} contient cependant beaucoup d'opérateurs : les coefficients $a_i$ peuvent être bien plus grands que $a_d$ pour la norme spectrale.
On peut les autoriser à être de plus en plus grands en augmentant $r$.
En considérant tous les entiers $r \geq 1$, la condition 3 donne exactement tous les opérateurs finis dont le coefficient dominant est inversible.

\subsection{L'alg\`ebre microlocalisée $\Fi(U)$}\label{section4.2}

Soit toujours $U$ un ouvert affine de $\X$ sur lequel on dispose d'une coordonnée locale $t$.
Pour rappel, $\Fir(U)$ est une algèbre de Frechet-Stein dont tout élément $S$ s'écrit uniquement sous la forme $S = P + Q$ avec $P \in \Di(U)$ et $Q = \sum_{n = 1}^\infty a_{-n} \cdot (\varpi^r \partial)^{-n}$ vérifiant $|a_n| \to 0$ lorsque $n \to - \infty$.
Regardons les liens entre les algèbres $\F_{k,r}(U)$ pour un niveau de congruence $k$ fixé et pour $r$ variable.
Soit $r < r'$ deux entiers strictement positifs et $k \geq r'$.
Tout élément $S = P + Q$ de $\F_{k,r}(U)$, o\`u $P = \sum_{n = 0}^\infty a_n \cdot (\varpi^k \partial)^n$ et $Q = \sum_{n = 1}^{\infty} a_{-n} \cdot (\varpi^r \partial)^{-n}$, appartient à $\F_{k,r'}(U)$.
En effet, puisque $P \in \Dkq(U) \subset \F_{k,r'}(U)$, il suffit de vérifier que $Q \in \F_{k , r'}(U)$. On a
\[ Q = \sum_{n < 0} a_n \cdot (\varpi^r \partial)^n = \sum_{n < 0} a_n \varpi^{-(r'-r) n} \cdot (\varpi^{r'} \partial)^n . \]
Comme $|\varpi| ^{-(r'-r) n}  \underset{n \to - \infty}{\longrightarrow} 0$, on a bien $Q \in \F_{k , r'}(U)$.
Il est clair que $\|P\|_{k,r} = \| P \|_{k , r'} = \max_{n \in \N} \{ |a_n| \}$.
Par ailleurs, $\| Q \|_{k,r} = \max_{n<0} \{ |a_n\}$ et $\| Q\|_{k , r'} = \max_{ n<0} \left\{ |a_n| \cdot p^{-(r'-r)n} \right\}$.
Ainsi, il en découle que $\|Q\|_{k , r'} \leq \|Q\|_{k,r}$ et que $\|S\|_{k , r'} \leq \|S\|_{k,r}$. Autrement dit, l'inclusion d'algèbres $\F_{k ,r}(U) \hookrightarrow \F_{k,r'}(U)$ est 1-lipschtizienne. Elle est donc continue.
Cependant, la topologie de $\F_{k,r'}(U)$ induite sur $\F_{k ,r}(U)$ pour $r' > r$ par l'inclusion $\F_{k ,r}(U) \hookrightarrow \F_{k,r'}(U)$ ne coïncide pas avec la topologie de $\F_{k ,r}(U)$.
En effet, les normes $\| \cdot \|_{k , r}$ et $\| \cdot \|_{k , r'}$ ne définissent pas les mêmes séries convergentes pour les éléments de la forme $\sum_{n < 0} a_n \cdot (\varpi^r \partial)^n$.
En conclusion, $\F_{k ,r}(U)$ est une sous-algèbre de $\F_{k,r'}(U)$ dès que $k \geq r' \geq r$ et la topologie de $\F_{k ,r}(U)$ est plus fine que celle de $\F_{k ,r'}(U)$.
Les inclusions $\F_{k ,r}(U) \hookrightarrow \F_{k,r'}(U)$ commutent avec les morphismes de transition $\F_{k+1 , r}(U) \hookrightarrow \F_{k,r}(U)$ et le diagramme suivant est commutatif :
\[ \xymatrix@R=3pc @C=3pc{ \F_{k+1 , r}(U) \ar@{^{(}->}[r]\ar@{^{(}->}[d] & \F_{k , r}(U) \ar@{^{(}->}[d] \\ \F_{k+1 , r'}(U) \ar@{^{(}->}[r] & \F_{k , r'}(U) . } \]
En passant à la limite projective sur $k$ dans la catégorie des $K$-algèbres topologiques, on obtient un morphisme injectif continu de $K$-algèbres $\F_{\infty , r}(U) \hookrightarrow \F_{\infty , r'}(U)$.

\begin{definition}
On introduit la $K$-alg\`ebre $\displaystyle \Fi(U) := \varinjlim_{r \geq 1} \F_{\infty , r}(U) = \bigcup_{r \geq 1} \F_{\infty , r}(U)$.
\end{definition}

Par d\'efinition, si $P \in \Fi(U)$, alors $P \in \F_{\infty , r}(U)$ pour un certain $r \in \N^*$.
En particulier, $P$ s'écrit uniquement sous la forme $P = \sum_{n\in \Z}a_n \cdot \partial^n$ avec $|a_n| \cdot \lambda^n \underset{n \to \infty}{\longrightarrow} 0$ pour tout réel $\lambda > 0$ et $|a_n| \cdot p^{nr} \underset{n \to -\infty}{\longrightarrow} 0$.
La première condition correspond à $\sum_{n = 0}^\infty a_n \cdot \partial^n \in \Di(U)$.
On obtient la description suivante de l'algèbre $\Fi(U)$ :
\[ \Fi(U) = \left\{ P + \sum_{n = 1}^\infty a_{-n} \cdot \partial^{-n}, ~~ P \in \Di(U) ,~~  \exists R > 0 ~~ \mathrm{tq} ~~ |a_n| \cdot R^n \underset{n \to -\infty}{\longrightarrow} 0  \right\} .\]
On munit la $K$-algèbre $\Fi(U)$ de la topologie la plus fine pour laquelle les inclusions $\Fir(U) \hookrightarrow \Fi(U)$ sont continues.
Il s'agit de la topologie localement convexe limite inductive.
Puisque l'algèbre $\Fi(U)$ est l'union croissante des algèbres $\Fir(U)$, on a $\Fi(U)^\times = \bigcup_{r \geq 1} \F_{\infty , r}(U)^\times$.
Le résultat suivant se déduit du théorème \ref{prop4.11}. En effet, l'union sur $r \in \N^*$ des opérateurs finis de $\Di(U)$ vérifiant le point 3 de cet théorème est l'ensemble des opérateurs différentiels finis de $\Di(U)$.

\begin{theorem}
Soit $V \subset U$ un ouvert et $P \in \Di(U)$.
Alors $P$ est inversible dans le microlocalisé $\Fi(V)$ si et seulement si
\begin{enumerate}
\item
$P = \sum_{n=0}^d a_n \cdot \partial^n$ dans $\Di(U)$,
\item
$a_d$ est inversible dans $\O_{\X , \Q}(V)$.
\end{enumerate}
\end{theorem}

\section{Variété caractéristique des $\Di$-modules coadmissibles}\label{section5}

On note $\pi : T^*\X \to \X$ le fibré cotangent de $\X$ et $s : \X \to T^*\X$ sa section nulle.
La surjection canonique $\pi$ est un morphisme ouvert.
Le niveau de congruence $k$ est toujours supposé supérieur ou égal à un.
Les $K$-algèbres de Banach $\Dkq(U)$ sont alors quasi-abéliennes et l'on dispose des microlocalis\'es d\'efinis dans les deux sections pr\'ec\'edentes.

\subsection{$\Fkrs$-module cohérent associé à un $\Dkq$-module cohérent}\label{section5.1}

On fixe dans cette partie les entiers $k \geq r \geq 1$.

\subsubsection*{Les faisceaux $\Fkrs$}

On associe au faisceau $\F_{k , r}$ un faisceau $\Fkrs$ sur le fibré cotangent $T^*\X$ de $\X$ de la manière suivante.
On rappelle que pour tout ouvert $V$ de $T^*\X$, son image $\pi(V)$ dans $\X$ est un ouvert puisque le morphisme $\pi : T^*\X \to \X$ est ouvert.

\begin{definition}
Pour tout ouvert $V$ de $T^* \X$, on pose
\[
\Fkrs(V) := 
\begin{cases}
\F_{k , r}( \pi(V)) \hspace{0.83cm} \mathrm{si}~~V \cap s(\X) = \emptyset  \\
\Dkq(\pi(V)) ~~~ \mathrm{si}~~V \cap s(\X) \neq \emptyset .
\end{cases}
\]
\end{definition}

La $K$-algèbre $\Fkrs(V)$ est munie d'une norme de Banach notée $\| \cdot \|_{k , r}$.
Si $V \cap s(\X) = \emptyset$, il s'agit de la norme sous-multiplicative $\| \cdot \|_{k , r}$ de $\F_{k , r}(\pi(V))$.
Sinon pour $V \cap s(\X) \neq \emptyset$, cette dernière est simplement la norme multiplicative $| \cdot |_k$ de $\Dkq(\pi(V))$.
Il est clair que $\Fkrs$ est un préfaisceau sur $T^*\X$. En effet, soient $W \subset V$ deux ouverts de $T^*\X$.
Si $V$ et $W$ n'intersectent pas la section nulle $s(\X)$, ou s'ils l'intersectent tous les deux, alors les morphismes de restrictions sont respectivement ceux du faisceau $\F_{k , r}$ ou ceux de $\Dkq$.
On suppose maintenant que $W \cap s(\X) = \emptyset$ mais que $V \cap s(\X) \neq \emptyset$.
Par définition,
\[ \Fkrs(W) = \F_{k , r}( \pi(W))~~ \mathrm{et} ~~ \Fkrs(V) = \Dkq(\pi(V)) . \]
L'inclusion $W \subset V$ implique que $\pi(W) \subset \pi(V)$.
Le morphisme de restriction d'algèbres $\Fkrs(V) \to \Fkrs(W)$ est simplement le morphisme composé
\[ \Dkq(\pi(V)) \to \Dkq(\pi(W )) \hookrightarrow \F_{k , r}(\pi(W)) . \]

\begin{lemma}
Le préfaisceau $\F_{k , r}$ est un faisceau sur le fibré cotangent $T^*\X$.
\end{lemma}
\begin{proof}
Puisque $\F_{k , r}$ et $\Dkq$ sont des faisceaux, il suffit en pratique de vérifier la propriété suivante.
Soit $V$ un ouvert du fibré cotangent $T^*\X$ et $(U_i)$ un recouvrement ouvert de $V$.
Si $S$ est une section de $\Fkrs(V)$ telle que $S_{|U_i} \in \Dkq(\pi(U_i))$ pour un certain $i$, alors $S_{|U_j} \in \Dkq(\pi(U_j))$ pour tout $j$.
Cette propriété implique que les conditions d'unicité et de recollement de $\Fkrs$ sont données exactement par celles des faisceaux $\Dkq$ et $\F_{k , r}$.
%\[\xymatrix{
%0 \ar[r] & \Dkq(\pi(V)) \ar[r] & \prod_j \Dkq(\pi(U_j)) \ar[r] & \prod_{j , \ell} \Dkq(\pi(U_j) \cap \pi(U_\ell)) \\
%& \prod_j \Fkrs(\
%}\]
Quitte à réduire les ouverts $U_i$, on peut supposer les ouverts $\pi(U_j)$ affines munis de coordonnées locales.
On peut aussi écrire $S_{|U_j}$ sous la forme
\[ S_{|U_j} = P + \sum_{ n =1}^\infty a_{-n} \cdot (\varpi^r \partial)^{-n} \in \F_{k , r}(\pi(U_j)), ~~ P \in \Dkq(\pi(U_j)) . \]
En effet, $S_{|U_j}$ est par définition de $\Fkrs$ une section de $\Dkq(\pi(U_j))$ ou de $\F_{ k, r}(\pi(U_j))$ avec $\Dkq(\pi(U_j)) \subset \F_{ k, r}(\pi(U_j))$.
Comme les fibres de $\pi : T^*\X \to \X$ sont irréductibles, on a $\pi(U_i \cap U_j) = \pi(U_i) \cap \pi(U_j)$ et la restriction $S_{| U_i \cap U_j}$ coincide avec les restrictions respectives de $S_{|U_i} \in \Dkq(\pi(U_i))$ et de $S_{|U_j} \in \F_{k , r}(\pi(U_j))$ par les morphismes de restriction $\pi(U_i) \to \pi(U_i) \cap \pi(U_j)$ et $\pi(U_j) \to \pi(U_i) \cap \pi(U_j)$.
Il en découle que
\[ S_{| U_i \cap U_j} = P + \sum_{ n =1}^\infty a_{-n} \cdot (\varpi^r \partial)^{-n} \in \Dkq(\pi(U_i) \cap \pi(U_j)) . \]
Par unicité de l'écriture d'un élément de $\F_{k , r}(\pi(U_i) \cap \pi(U_j))$ sous la forme d'une série en les puissances de la dérivation $\partial$, on en déduit que $a_{-n} = 0$ pour tout $n \in \N^*$.
Autrement dit, $S_{|U_j} \in \Dkq(\pi(U_j))$.
\end{proof}

\subsubsection*{Les modules $\Mkrs$}

Soit $\pi^{-1} (\Dkq) \to \Fkrs$ le morphisme de faisceaux de $K$-algèbres donn\'e sur les ouverts $V$ de $T^*\X$ par les inclusions d'algèbres
\[
\pi^{-1} (\Dkq)(V) = \Dkq(\pi(V)) \hookrightarrow \Fkrs(V) = 
\begin{cases}
\F_{k , r}( \pi(V)) \hspace{0.82cm} \mathrm{si}~~V \cap s(\X) = \emptyset  \\
\Dkq(\pi(V)) ~~~ \mathrm{si}~~V \cap s(\X) \neq \emptyset .
\end{cases}
\]
Ce morphisme induit une structure de $\pi^{-1} (\Dkq)$-module pour le faisceau $\Fkrs$.

\begin{definition}
Soit $k \in \N^*$ et $r \in \{ 1 , \dots , k \}$.
On associe à tout $\Dkq$-module cohérent $\M_k$ les modules cohérents $\Mkr := \F_{k , r} \otimes_{\Dkq} \M_k$ et $\Mkrs := \Fkrs \otimes_{\pi^{-1} (\Dkq)} \pi^{-1} (\M_k)$.
\end{definition}

Soit $(x , \xi)$ un point du fibré cotangent $T^*\X$ ; il appartient \`a la section nulle de $T^*\X$ si et seulement si $\xi = 0$.
On note $\supp (\Mkrs)$ le support du module $\Mkrs$ en tant que faisceau sur le fibré cotangeant $T^*\X$.
On dispose des isomorphismes suivants :
\begin{align*}
(\Mkrs)_{(x , \xi)} & \simeq (\F_{k , r})_{(x , \xi)} \otimes_{(\Dkq)_{(x , \xi)}} (\M_k)_{(x , \xi)} \\ 
& \simeq \begin{cases}
(\Mkr)_x \simeq (\F_{k , r})_x \otimes_{(\Dkq)_x} (\M_k)_x  \hspace{1.2cm} \mathrm{si}~~\xi \neq 0  \\
(\M_k)_x \simeq (\Dkq)_x \otimes_{(\Dkq)_x} (\M_k)_x \hspace{1cm} \mathrm{si}~~\xi = 0 \, .
\end{cases}
\end{align*}

\begin{lemma}
Le support $\supp (\Mkrs)$ est une partie fermée de $T^*\X$.
\end{lemma}
\begin{proof}
On rappelle que le fibré cotangent $T^*\X$ est quasi-compact et que le $\Fkrs$-module $\Mkrs$ est cohérent.
Soit $V_1 , \dots , V_n$ un recouvrement ouvert affine fini de $T^*\X$ tel que ${\Mkrs}_{|V_i}$ soit un ${\Fkrs}_{|V_i}$-module de type fini pour tout entier $i \in \{ 1 , \dots , n \}$.
Il existe des sections $e_1 , \dots , e_s$ de $\Mkrs(V_i)$ telles que ${\Mkrs}_{|V_i} = {\Fkrs}_{|V_i} \cdot e_1 + \dots + {\Fkrs}_{|V_i} \cdot e_s$.
Alors $\supp(\Mkrs) \cap V_i = \bigcup_{j=1}^s \supp(e_j)$.
En effet, il est clair que $\cup_{j=1}^s \supp(e_j) \subset \supp(\Mkrs) \cap V_i$.
De plus, si $(e_1)_x = \dots = (e_s)_x = 0$ pour un certain $x\in V_i$, alors $e_1 = \dots = e_s = 0$ sur un voisinage de $x$ dans $V_i$ et toute combinaison linéaire des $e_i$ est nulle sur ce voisinage.
L'autre inclusion des supports en découle.
Comme le support de toute section de $\Mkrs$ est fermé d'après \cite[\href{https://stacks.math.columbia.edu/tag/01AU}{Tag 01AU}]{stacks-project}, $\supp(\Mkrs) \cap V_i $ est fermé car union finie de fermés.
Enfin, puisque les $V_i$ recouvrent $T^*\X$, le support $\supp (\Mkrs)$ est une partie fermée de $T^*\X$.
\end{proof}

\subsubsection*{Lien entre $\Car(\M_k)$ et $\supp( \Mkrs)$}

Soit $\M_k = \Dkq / \I_k$ avec $\I_k$ un idéal cohérent de $\Dkq$.
On note $\I_k^\circ$ l'ensemble des éléments de $\I_k$ de norme $| \cdot |_k$ inférieure ou égale à un : $\I_k^\circ(U) := \{ P \in \I_k(U) : |P|_k \leq 1 \}$.
C'est un idéal cohérent de $\Dk$ puisque les algèbres $\Dk(U)$ sont noetheriennes et le faisceau $\Dk$ est cohérent.
Comme $\I_k^\circ \subset \I_k$ et $\M_k = \Dkq / \I_k$ est sans $\varpi$-torsion, le module $\Dk / \I_k^\circ$ est aussi sans $\varpi$-torsion ; c'est donc un modèle entier de $\M_k$.
Identifions $U$ à un ouvert affine de la fibre spéciale $X$ de $\X$ et munissons $T^*U$ du système de coordonnées locales $(x , \xi)$ associé à la coordonnée étale de $U$.
\`A tout opérateur $P = \sum_{n = 0}^\infty a_n \cdot (\varpi^k \partial)^n \in \Dk(U)$ est associ\'e un élément du fibré cotangent $T^*U$ donné par $\sigma(P) = \sigma_k(P) = \bar{a}_{\Nb(P)} \cdot \xi^{\Nb(P)}$
où $\bar{a}_{\Nb(P)} \in \O_X(U)$ est la réduction modulo $\varpi$ de $a_{\Nb(P)}$. On a $\sigma(P) = \sigma(\bar{P})$, où $\sigma(\bar{P})$ est le symbole principal de $\bar{P} \in (\I_k^\circ \otimes_\V \kappa)(U)$ d\'efini dans la partie \ref{section2.1}.
On rappelle que
\[ \Car (\M_k) \cap T^* U= \left\{ ( x , \xi) \in T^* U : ~ \sigma(P)(x,\xi) = 0 ~~~ \forall P \in \I_k^\circ(U) \right\}. \]

\begin{lemma}\label{propideal}
Soit $\I$ un idéal cohérent de $\Dkq(U)$.
Si $\Ek \cdot \I = \E_k(U)$, alors $\I \cap \Ek^\times \neq \emptyset$.
\end{lemma}
\begin{proof}
On reprend les notations de la proposition \ref{propnoeth} vis à vis des gradués.
Pour tout entier $m \in \Z$, on pose $\I(m) = \Dkq(U)(m) \cap \I$. On définit ainsi une filtration croissante exhaustive de l'idéal $\I$.
Le gradué associé $\gr \I$ est un idéal de $\gr \Dkq(U)$.
On note $\gamma : \I \to \gr \I$ la restriction à $\I$ de l'application $\gamma : \Dkq(U) \to \gr \Dkq(U)$.
On observe alors que $\gr \I = \gr (\Dkq(U)) \cdot \gamma(\I)$. De même, $\gr (\Ek) \cdot \I) = \gr (\Ek) \cdot \gamma(\I)$ pour la filtration $(\Ek) \cdot \I)(m) = \Ek(m) \cap (\Ek) \cdot \I)$.
On rappelle que $\gr \Dkq(U) \simeq \gr(\O_{\X , \Q}(U)) [ \zeta_k ]$ et que $\gr \Ek \simeq \gr(\O_{\X , \Q}(U)) [ \zeta_k , \zeta_k^{-1}]$.
L'hypothèse $\Ek \cdot \I = \E_k(U)$ implique $\gr (\Ek\cdot \I) = \gr (\Ek) \cdot \gamma(\I) = \gr \Ek$.
Autrement dit, il existe des opérateurs $P_1 , \dots , P_s \in \I$ et $Q_1 , \dots , Q_s \in \gr \Ek$ tels que $Q_1 \cdot \gamma(P_1) + \dots + Q_s \cdot \gamma(P_s) = 1$.
Puisque $\gr \Ek \simeq \gr(\O_{\X , \Q}(U)) [ \zeta_k , \zeta_k^{-1}]$, on peut trouver un entier naturel $n$ pour lequel
\[ \zeta_k^n \cdot Q_1 , \dots , \zeta_k^n \cdot Q_s \in \gr \Dkq(U) \simeq \gr(\O_{\X , \Q}(U)) [ \zeta_k ] . \]
On note $R_i = \zeta_k^n \cdot Q_i \in \gr \Dkq(U)$. Alors
\[ \zeta_k^n = R_1 \cdot \gamma(P_1) + \dots + R_s \cdot \gamma(P_s) \in \gr \I = \gr (\Dkq(U)) \cdot \gamma(\I) . \]
Puisque $\zeta_k^n$ est un monôme de norme un, $\zeta_k^n \in \gr_0 \I = \I(0) / \I(-1)$.
En particulier, $\zeta_k^n$ se relève en un élément $P$ de $\I$ vérifiant $\gamma(P) = \zeta_k^n$.
Cela signifie que $\Nb(P) = n_k(P) = n$ et que le coefficient d'indice $n$ de $P$ est inversible.
D'après la proposition \ref{prop2.11}, $P$ est inversible dans $\Ek$.
\end{proof}

\begin{prop}\label{prop5.5}
Pour tout $\Dkq$-module cohérent $\M_k$ et pour tout entier $r \in \{1 , \dots , k \}$, on dispose d'une inclusion d'espaces topologiques $\Car (\M_k) \subset \supp (\Mkrs)$.
\end{prop}
\begin{proof}
On peut supposer la courbe formelle $\X$ est affine munie d'une coordonnée locale et se ramener au cas où $\M_k = \Dkq / \I$ pour un idéal cohérent $\I$ de $\Dkq$.
Alors $\tilde{\M}_{ k , r} = \F_{k , r} \otimes_{\Dkq} \M_k \simeq \F_{k , r} / \F_{k , r} \cdot \I$, où $\F_{k , r} \cdot \I$ est l'idéal à gauche de $\F_{k , r}$ engendré par $\I$.
On commence par établir que $\Car (\M_k)$ et $\supp (\Mkrs)$ coïncident au niveau de la section nulle du fibré cotangent $T^*\X$.
On rappelle que $(\Mkrs)_{( x , 0)} = (\M_k)_x$ et $(\Mkrs)_{( x , \xi)} = (\tilde{\M}_{k , r})_x$ pour $\xi \neq 0$ avec $\tilde{\M}_{k , r} = \F_{k , r} \otimes_{\Dkq} \M_k$.
L'idéal cohérent $\I^\circ$ de $\Dk$ est constitué des éléments de $\I$ de norme $| \cdot |_k$ inférieure où égale à un.
La réduction $I = \I^\circ \otimes_\V \kappa$ modulo $\varpi$ de $\I^\circ$ est un idéal cohérent de $\Dks = \Dk \otimes_\V \kappa$.
Soit $P_1 , \dots , P_s$ des éléments de l'idéal $\I^\circ(U)$ tels que $\sigma(P_1) , \dots , \sigma(P_s)$ engendrent le gradué $\gr I$.
Alors $P_1 , \dots , P_s$ engendrent l'idéal $\I$ et
\begin{align*}
 \Car (\M_k) & = \Car (\Dks / I) = \left\{ (x , \xi) \in T^*X : ~ \sigma(P)(
 x , \xi) =  0 ~~\forall P \in I \right\} \\
 & = \left\{ (x , \xi) \in T^*X : ~ \sigma(P_1)(x , \xi) = \dots = \sigma(P_s)(x , \xi) = 0  \right\} .
\end{align*}
On écrit $\sigma(P_i) = \sigma(\bar{P}_i) = a_i \cdot \xi^{d_i} \in \gr I(X)$. La section nulle est une composante irréductible de la variété caractéristique $\Car (\M_k)$ si et seulement si les ordres $d_1 , \dots , d_s$ des opérateurs $P_1 , \dots , P_s$ sont tous supérieurs ou égaux à un.
Puisque l'idéal $\gr I$ est engendré par les monômes $\sigma(P_1) , \dots , \sigma(P_s)$, l'idéal $\gr I$ ne contient pas de terme d'ordre zéro en $\xi$.
On en déduit que l'idéal $\I$ ne contient pas d'opérateur $P$ vérifiant $\Nb(P) = 0$.
Dans le cas contraire, $\sigma(P)$ serait un élément d'ordre $\Nb(P) = 0$ en $\xi$ de l'idéal $\gr I$.
Les points du support $\supp (\Mkrs)$ interceptant la section nulle sont donnés par le support de $\M_k$.
Puisque tout élément $P$ de l'idéal $\I(\X)$ vérifie $\Nb(P) \geq 1$, aucun élément de $\I$ n'est inversible dans $(\Dkq)_x$ pour chaque point $x$ de $\X$.
Ainsi, $(\Mkrs)_{(x, 0)} = (\Dkq / \I)_x \neq 0$ en tout point $x$ de $\X$. Autrement dit, $\supp (\M_k) = \X$ et le support $\supp (\Mkrs)$ contient la section nulle du fibré cotangent.
Réciproquement, le support du module $\Dkq / \I$ n'est pas $\X$ tout entier si et seulement si l'idéal $\I(\X)$ contient un opérateur $P$ vérifiant $\Nb(P) = 0$.
Dans ce cas, le support du module $\Dkq / \I$ consiste en un nombre fini de points contenus dans l'ensemble des zéros du coefficient constant $a_0$ de $P$.
Par ailleurs, $\sigma(P) = \bar{a}_0$ est un élément du gradué $\gr I(U)$. La variété caractéristique $\Car(\Dks / I)$ ne contient donc pas la section nulle.
De m\^eme, on vérifie, que la variété caractéristique $\Car (\M_k)$ intercepte la section nulle exactement en les points du support du module $\Dkq / \I$, autrement dit parmi les zéros du coefficient constant $a_0$ de $P$.

Regardons maintenant ce qu'il se passe en dehors de la section nulle et fixons un point $(x , \xi) \in \Car(\M_k)$ situé en dehors de cette dernière.
On rappelle qu'un point $(x , \xi) \in T^*\X$ appartient à la variété caractéristique $\Car(\M_k)$ si et seulement si pour tout opérateur $P$ de $\I(\X)$ de norme un, $\sigma(P)( x , \xi) = 0$.
On en déduit qu'aucun élément $P$ de l'idéal $\I(\X)$ n'est inversible dans $(\E_k)_x$.
En effet, le coefficient dominant $a$ de $P$ s'annule en $x$ puisque le point $(x , \xi)$ n'appartient pas à la section nulle du fibré cotangent : $\xi \neq 0$ et $\sigma(P) = \bar{a}(x) \cdot \xi^{\Nb(P)} = 0$ implique $a(x) = 0$.
D'après le lemme \ref{propideal}, l'idéal $\E_k \cdot \I$ de $\E_k$ engendré par l'idéal $\I$ ne contient pas d'élément inversible au voisinage de $x$.
Autrement dit, $\E_k(W) \cdot \I(W) \subsetneq \E_k(W)$ pour tout ouvert affine $W$ de $\X$.
Il en découle que l'idéal $\F_{k , r} \cdot \I$ de $\F_{k , r}$ ne contient aucun élément inversible dans $(\F_{k , r})_x$.
Dans le cas contraire, il existerait un voisinage ouvert $W$ de $x$ pour lequel $\F_{k , r}(W) \cdot \I(W) = \F_{k , r}(W)$. Alors
\[ \E_k(W) \cdot \F_{k , r}(W) \cdot \I(W) = \E_k(W) \cdot \I(W) = \E_k(W) \cdot \F_{k , r}(W) = \E_k(W) .\]
Ainsi, $\E_k(W) \cdot \I(W) = \E_k(W)$ et $\I(W)$ contiendrait un opérateur inversible au voisinage de $x$ d'après le lemme \ref{propideal}.
Il en découle qu'aucun élément de l'idéal $\F_{k , r} \cdot \I$ n'est inversible dans le microlocalisé $(\F_{k , r})_x$.
On en déduit que $(\F_{k , r} / \F_{k , r} \cdot \I)_x \neq 0$ et que $( x , \xi) \in \supp (\Mkrs)$.
On a donc démontré l'inclusion $\Car (\M_k) \subset \supp (\Mkrs)$ en dehors de la section nulle.
\end{proof}

\begin{remark}
Pour un $\Dkq$-module cohérent $\M_k$ et $r \in \{ 1 , \dots , k\}$, nous venons de démontrer que la variété caractéristique $\Car(\M_k)$ et le support $\supp(\Mkrs)$ coincident au niveau de la section nulle.
Cependant, l'inclusion $\Car(\M_k) \subset \supp(\Mkrs)$ peut être stricte en dehors de la section nulle.
Cela provient du fait qu'il est plus \og difficile \fg \, d'être inversible dans le microlocalisé $\F_{k, r}$ que d'inverser les symboles principaux des opérateurs.
Pour exemple, soit $P$ un opérateur différentiel infini de $\Di(U)$. Les variétés caractéristiques $\Car(\Dkq / P)$ sont toutes de dimension un, mais $\supp(\Mkrs) = T^*\X$ pour $r$ et $k$ suffisamment grands d'après \ref{ex6.3}.
\end{remark}

\subsubsection*{Inégalité de Bernstein}

Nous disposons de l'inégalité de Bernstein pour les $\Dkq$-modules cohérents démontrée dans \cite{hallopeau1}, proposition 3.13 : un $\Dkq$-module cohérent $\M_k$ est non nul si et seulement si $\dim \Car (\M_k) \geq 1$.
Il d\'ecoule de la proposition \ref{prop5.5} que si $\dim (\supp \Mkrs) \leq 1$ pour un certain $r \in \{1 , \dots , k \}$, alors $\M_k$ est un $\Dkq$-module holonome.
Par ailleurs, cette proposition implique aussi que si $\dim (\Car\M_k) \geq 1$, alors $\dim (\supp \Mkrs) \geq 1$.
Pour r\'esumer, un $\Dkq$-module cohérent $\M_k$ est non nul si et seulement si pour un certain $r$, $\dim(\supp \Mkrs) \geq 1$. Dans ce cas, cette inégalité est vérifiée pour tout entier $r \in \{ 1 , \dots , k \}$.
Puisque la courbe formelle $\X$ est connexe et lisse, elle est irréductible et le fibré cotangent $T^*\X$ est aussi irréductible.
Comme $\dim(T^*\X) =2$, si le support du module $\Mkrs$ est de dimension deux,
alors $\supp (\Mkrs) = T^*\X$.
Lorsque $\dim (\supp \Mkrs) = 1$, la proposition suivante montre que le support du module $\Mkrs$ est une partie conique de $T^*\X$ constituée de composantes irréductibles verticales et potentiellement d'une composante irréductible horizontale d'équation $\xi = 0$.

\begin{prop}[Inégalité de Bernstein]\label{prop5.7}
Soit $\M_k$ un $\Dkq$-module cohérent non nul. Alors $\supp (\Mkrs)$ est égal  $T^*\X$ ou bien équidimensionnel de dimension 1.
\end{prop}
\begin{proof}
On peut supposer la courbe formelle $\X$ affine munie d'une coordonnée locale.
Soit $\M_k$ un $\Dkq$-module cohérent tel que $\dim (\supp \Mkrs) = 1$.
Par la proposition \ref{prop3.22}, $\M_k$ est holonome et $\M_k = \Dkq / \I_k$ pour un idéal cohérent $\I_k$ de $\Dkq$.
On a $\Mkr = \F_{k , r} / \F_{k , r} \cdot \I_k$, où $\F_{k , r} \cdot \I_k$ est l'idéal de $\F_{k , r}$ engendré par $\I_k$.
Il a été vu dans la proposition \ref{prop5.5} que le support $\supp (\Mkrs)$ et la variété caractéristique $\Car(\M_k)$ coincident au niveau de la section nulle et que $\Car(\M_k) \subset \supp (\Mkrs)$.
Puisque les composantes irréductibles de $\Car(\M_k)$ sont toutes de dimension un, le support $\supp (\Mkrs)$ n'a pas de point isolé sur la section nulle.
Il suffit d'élucider ce qu'il se passe en dehors de la section nulle.
Soit $(x , \xi) \in T^* \X  \backslash s(\X)$. On rappelle que $(\Mkrs)_{(x , \xi)} = (\Mkr)_x$.
On en déduit que si $(\Mkr)_x \neq 0$, alors le support du module $\Mkrs$ contient la droite verticale passant par $x$ sauf potentiellement pour le point situé sur la section nulle.
Si le support du module $\Mkrs$ contient la section nulle, il n'y a rien à montrer.
Dans ce cas, le support $\supp (\Mkrs)$ consiste en une composante irréductible horizontale d'équation $\xi = 0$ et en un nombre fini de composantes verticales.
On suppose maintenant que la section nulle n'est pas une composante irréductible du support $\supp (\Mkrs)$.
Il faut montrer que si $(\Mkrs)_{(x , \xi)} = (\Mkr)_x \neq 0$ pour tout $\xi \neq 0$, alors $(\Mkrs)_{(x , 0)} = (\M_k)_x \neq 0$.
Autrement dit, il faut prouver qu'aucun opérateur $P$ de l'idéal $\I_k(U)$, pour $U$ un voisinage ouvert affine de $x$ dans $\X$, n'est inversible dans $(\Dkq)_x$.
Si $\Nb(P) \geq 1$, alors la proposition \ref{prop2.2.3} montre que $P$ n'est pas inversible dans $(\Dkq)_x$.
Néanmoins, il a été vu dans la preuve de la proposition \ref{prop5.5} que l'idéal $\I_k(U)$ contient des opérateurs différentiels $P$ vérifiants $\Nb(P) = 0$ lorsque la section nulle n'est pas une composante irréductible de la variété caractéristique $\Car(\Dkq / \I_k)$. Soit $P$ un tel élément et $(x , \xi)$ un point du fibré cotangent tel que $\xi \neq 0$.
L'hypothèse $(\Mkrs)_{(x , \xi)} = (\Mkr)_x \neq 0$ implique que le coefficient constant $a_0$ de $P$ s'annule en $x$.
Autrement, $P$ serait inversible dans le microlocalisé $(\F_{k , r})_x$ d'après la proposition \ref{prop2.22}.
En effet, $a_0$ serait inversible au voisinage de $x$ puisque ne s'annulant pas en $x$.
Comme $\Nb(P) = 0$, ceci impliquerait que $(\Mkrs)_{(x , \xi)} = (\Mkr)_x = 0$.
Ainsi, aucun opérateur différentiel $P$ de l'idéal $\I_k(U)$ vérifiant $\Nb(P) = 0$ n'est inversible dans $(\Dkq)_x$ lorsque $(\Mkrs)_{(x , \xi)} = (\Mkr)_x \neq 0$.
On en déduit que $(\M_k)_x \neq 0$. 
Le support du module $\Mkrs$ contient donc la droite verticale d'abscisse $x$.
\end{proof}

\subsection{$\Firs$-module coadmissible associé à un $\Di$-module coadmissible}\label{section5.2}

On associe maintenant à tout $\Di$-module coadmissible $\M$ un $\Firs$-module coadmissible $\Mrs$ limite projective sur $k$ des $\Fkrs$-modules cohérents $\Mkrs$, où $\Firs$ est la limite des faisceaux de $K$-algèbres $\Fkrs$.
L'inégalité de Bernstein pour le support $\supp (\Mrs)$ découle directement des inégalités de Bernstein pour les $\Fkrs$-modules cohérents $\Mkrs$.

\subsubsection*{Les faisceaux $\Firs$}

On considère les morphismes de transition $\F_{k+1 , r}^* \to \Fkrs$ induis localement sur les ouverts $V$ de $T^*\X$ par les inclusions $\F_{k+1 , r}( \pi(V))  \hookrightarrow \F_{k , r}( \pi(V))$ lorsque $V \cap s(\X) = \emptyset$ et par $\widehat{\mathcal{D}}^{(0)}_{\X, k+1, \Q}(\pi(V)) \hookrightarrow \Dkq(\pi(V))$ dans le cas où $V \cap s(\X) \neq \emptyset$.
On déduit de la proposition \ref{prop2.28} et de \cite[corollaire 2.2.20]{huyghe} que ces morphismes sont plats à gauche et à droite.
On vérifie alors que $\Firs := \varprojlim_{k \geq r} \Fkrs$ est un faisceau sur le fibré cotangent $T^*\X$ défini localement par les algèbres de Fréchet-Stein $\Fir(U)$ et $\Di(U)$ :
\[
\Firs(V) = 
\begin{cases}
\Fir( \pi(V)) \hspace{0.515cm} \mathrm{si}~~V \cap s(\X) = \emptyset  \\
\Di(\pi(V)) ~~~ \mathrm{si}~~V \cap s(\X) \neq \emptyset .
\end{cases}
\]

\subsubsection*{Les $\Fir$-modules $\Mr$}

On se donne  un $\Di$-module coadmissible $\M = \varprojlim_k \M_k$. Pour tous entiers $k \geq r \geq 1$, rappelons que $\Mkr$ est le $\F_{k,r}$-module cohérent $\Mkr  = \F_{k,r} \otimes_{\Dkq} \M_k$.
On note $g_k$ le morphisme de transition $\M_{k+1} \to \M_k$ et $i_{k,r} : \F_{k+1,r} \to \F_{k,r}$ le morphisme donné localement par l'inclusion de $\F_{k+1 , r}(U)$ dans $\F_{k,r}(U)$.
On considère le morphisme de transition $\F_{k+1 , r}$-linéaire $g_{k,r}=i_{k,r} \otimes g_k : \tilde{\M}_{k+1 , r} \to \Mkr$. On pose $\Mr := \varprojlim_{k \geq r} \Mkr$.

\begin{prop}\label{prop4.3}
On a $\Mkr \simeq \F_{k , r} \otimes_{\F_{k+1 , r}} \tilde{\M}_{k+1 , r}$. Autrement dit, $\Mr$ est un $\Fir$-module coadmissible.
\end{prop}
\begin{proof}
L'algèbres $\Fkr$ est naturellement un $\widehat{\mathcal{D}}^{(0)}_{\X, k+1 , \Q}(U)$-module via la composition des morphismes d'algèbres $\widehat{\mathcal{D}}^{(0)}_{\X, k+1 , \Q}(U) \hookrightarrow \Dku \hookrightarrow \Fkr$.
Ainsi, $\F_{k , r}$ est un $\widehat{\mathcal{D}}^{(0)}_{\X, k+1 , \Q}$-module.
On dispose de l'isomorphisme suivant de $\widehat{\mathcal{D}}^{(0)}_{\X, k+1 , \Q}$-modules :
\begin{align*}
\F_{k,r} \otimes_{\F_{k+1 , r}} \tilde{\M}_{k+1 , r}  & = \F_{k,r} \otimes_{\F_{k+1 , r}} \F_{k+1 , r} \otimes_{\widehat{\mathcal{D}}^{(0)}_{\X, k+1 , \Q}} \M_{k+1} \\
& \simeq \F_{k,r} \otimes_{\widehat{\mathcal{D}}^{(0)}_{\X, k+1 , \Q}} \M_{k+1} .
\end{align*}
La structure de $\F_{k , r}$-modules prolonge celle de $\widehat{\mathcal{D}}^{(0)}_{\X, k+1 , \Q}$-modules. Cet isomorphisme est donc un isomorphisme de $\F_{k,r}$-modules.
Comme $\Dkq \otimes_{\widehat{\mathcal{D}}^{(0)}_{\X, k+1 , \Q}} \M_{k+1} \simeq \M_k$, on obtient
\begin{align*}
\F_{k,r} \otimes_{\widehat{\mathcal{D}}^{(0)}_{\X, k+1 , \Q}} \M_{k+1} & \simeq  \F_{k,r} \otimes_{\Dkq}   \Dkq \otimes_{\widehat{\mathcal{D}}^{(0)}_{\X, k+1 , \Q}} \M_{k+1} \\
& \simeq \F_{k,r} \otimes_{\Dkq} \M_k = \Mkr
\end{align*}
en tant que $\Dkq$-modules. Puisque la structure de $\F_{k,r}$-modules étend celle de $\Dkq$-modules, c'est aussi un isomorphisme de $\F_{k,r}$-modules.
On obtient donc l'isomorphisme souhaité de $\F_{k,r}$-modules : $\F_{k,r} \otimes_{\F_{k+1 , r}} \tilde{\M}_{k+1 , r}  \simeq \Mkr$.
\end{proof}

Soit $\Mr = \varprojlim_{k \geq r} \Mkr$ un $\Fir$-module coadmissible.
On rappelle que les supports $\supp (\Mkr)$ sont des parties fermées de $\X$.
L'isomorphisme $\Mkr \simeq \F_{k , r} \otimes_{\F_{k+1 , r}} \tilde{\M}_{k+1 , r}$ induit un isomorphisme $(\Mkr)_x \simeq (\F_{k , r})_x \otimes_{(\F_{k+1 , r})_x} (\tilde{\M}_{k+1 , r})_x$ pour tout $x \in \X$.
Autrement dit, la suite des supports $(\supp (\Mkr))_k$ est croissante.
On définit le support de $\Mr$ par
\[ \supp (\Mr) := \overline{ \bigcup_{k \geq r} \supp (\Mkr)} . \]

\begin{remark}
Le $\Fir$-module $\Mr$ n'étant pas cohérent, il n'est pas clair que son support usuel en tant que faisceau sur $\X$, not\'e $S(\Mr)$ pour cette remarque, soit fermé.
On dispose seulement d'inclusions :
\[ \bigcup_{k \geq r} \supp (\Mkr) \subset S(\Mr) \subset \supp (\Mr) =\overline{ \bigcup_{k \geq r} \supp (\Mkr)} . \]
En effet, comme $\Mkr \simeq \F_{k , r} \otimes_{\F_{\infty , r}} \Mr$, on a $\supp (\Mkr)\subset S (\Mr)$ pour tout $k \geq r$.
Ainsi, $\bigcup_{k \geq r} \supp (\Mkr) \subset S (\Mr)$.
L'autre inclusion $S(\Mr) \subset \overline{ \bigcup_{k \geq r} \supp (\Mkr)}$ s'obtient de la manière suivante.
Si $x$ n'appartient pas à $\overline{ \bigcup_{k \geq r} \supp (\Mkr)}$, alors il existe un ouvert $U$ de $\X$ contenant $x$ pour lequel $(\Mkr)_{|U} = 0$ indépendamment de $k$.
La limite projective $\M_{|U} = \varprojlim_{k \geq 1} (\Mkr)_{|U} = 0$ est donc nulle et $x \notin S (\Mr)$.
\end{remark}

\begin{example}\label{ex5.6}~
On suppose que la courbe formelle $\X$ est affine munie d'une coordonnée locale.
Soit $P \in \Di(\X)$ et $\M = \Di / P$. On a $\Mkr = \F_{k,r} /P$ et $\Mr = \Fir / P$.

\begin{enumerate}
\item
Si $P$ est un opérateur infini, alors $P$ n'est jamais inversible dans le microlocalisé $\Fir(\X)$ d'après le théorème \ref{prop4.11}.
Ceci implique que $P$ n'est pas inversible dans $\F_{k,r}(\X)$ pour $k$ suffisamment grand et que $\supp (\F_{k,r} / P) = \X$. On en déduit que $\supp (\Mr) = \X$.
Prenons par exemple $P = \prod_{n \geq 1} (1 - a_n \cdot \varpi^n \cdot \partial) \in \Di(\X)$ avec $a_n \in \O_\X(\X)$ de norme un.
Alors $P$ s'écrit
\[ P = 1 + \sum_{n = 1}^\infty \beta_n \cdot \varpi^{n(n+1)/2} \cdot \partial^n  = 1 + \sum_{n = 1}^\infty \beta_n \cdot \varpi^{n((n+1)/2 - k)} \cdot (\varpi^k\partial)^n\]
avec $\beta_n = \varepsilon_n \cdot a_1 \dots a_n + \alpha_n$ où $\varepsilon_n \in \V^\times$ et $|\alpha_n | < 1$. On a donc $|\beta_n| = 1$.
On observe que $\Nb(P) = \nb(P) = k$. De plus, pour que $P$ soit inversible dans le microlocalisé $\F_{k,r}(\X)$, il faut avoir :
\[ \forall n \in \{ 0 , \dots , k-1 \}, ~  | \varpi|^{n((n+1)/2 - k)} \cdot p^{(k-r)(k-n)} < | \varpi|^{k((k+1)/2 - k)} . \]
Cette condition se réécrit
\[\forall n \in \{ 0 , \dots , k-1 \}, ~ \frac{k(k+1)}{2} - \frac{n(n+1)}{2} + r(n-k) < 0 . \]
Pour $n = 0$, on obtient $ \frac{k(k+1)}{2}  - rk < 0$. Ce n'est possible que pour $ \frac{k+1}{2}  < r$.
Ainsi, dès que $k$ est supérieur ou égal à $2 r- 1$, l'opérateur $P$ n'est jamais inversible dans le microlocalisé $\F_{k,r}(\X)$ et  $\supp (\F_{k,r} / P) = \X$.

\item
On suppose maintenant que $P = \sum_{n=0}^d a_n \cdot \partial^n$ est un opérateur fini d'ordre $d$.
Pour $k$ suffisamment grand, disons $ k \geq k_0$, $\Nb(P) = d$.
Pour $r$ assez grand, disons $r \geq r_0$, et pour $k \geq k_0' = \max\{k_0 , r_0\}$, $P$ vérifie la condition 3 de la proposition \ref{prop2.22} : 
\[ \forall n \in \{ 0 , \dots , d-1 \}, ~~ |a_n| \cdot p^{r(d-n)} < |a_d| . \]
Ainsi, pour tout entier $k \geq k_0'$ et pour tout entier $r \geq r_0$, $P$ est inversible dans le microlocalisé $(\F_{k,r})_x$ si et seulement si son coefficient dominant $a_d$ est inversible au voisinage de $x$.
Dans ce cas, $\supp (\Mkr) = V(a_d)$. On en déduit que $\supp (\Mr) = V(a_d)$ pour tout entier $r \geq r_0$.
Pour les autres valeurs de $r$, $P$ n'est jamais inversible. Donc $\supp (\Mkr) = \X$ pour tout $k$ dès que $r < r_0$ en prenant $r_0$ minimal.
Ainsi, $\supp (\Mr) = \X$ pour $r < r_0$.
\end{enumerate}
\end{example}

\subsubsection*{Les $\Firs$-modules $\Mrs$}

Soit $\M = \varprojlim_k \M_k$ un $\Di$-module coadmissible et $k \geq r \geq1$.
On rappelle que $\Mkrs = \Fkrs \otimes_{\pi^{-1} (\Dkq)} \pi^{-1} (\M_k)$ où $\pi : T^*\X \to \X$ désigne toujours la projection canonique.
Les morphismes de transition $\widehat{\mathcal{D}}^{(0)}_{\X, k+1 , \Q} \to \Dkq$, $\F_{k+1 , r}^* \to \F_{k , r}^*$ et $\M_{k+1} \to \M_k$ induisent des morphismes de transition $\M_{k+1 , r}^* \to \Mkrs$.
On définit un $\Firs$-module $\Mrs$ à partir de $\M$ par :
\[ \Mrs: = \varprojlim_{k \geq r} \Mkrs . \]
Pour tout point $(x, \xi)$ de $T^*\X$, on vérifie que
$
(\Mrs)_{(x,\xi)} \simeq 
\begin{cases}
(\Mr)_x ~~~~~ \mathrm{si}~~ \xi \neq 0 \\
(\M)_x \hspace{1.2cm} \mathrm{si}~~ \xi = 0 .
\end{cases}
$
%Puisque $\Mr$ est un $\Fir$-module coadmissible et $\M$ est un $\Di$-module coadmissible, le $\Firs$-module $\Mrs$ est coadmissible.

\begin{prop}
Le $\Firs$-module $\Mrs$ associé à tout $\Di$-module coadmissible $\M$ est encore coadmissible.
\end{prop}
\begin{proof}
On note $\M = \varprojlim_k \M_k$. On dispose de l'isomorphisme de $\Fkrs$-modules
\begin{align*}
\Fkrs \otimes_{\F_{k+1 , r}^*} \M_{k+1 , r}^* & = \Fkrs \otimes_{\F_{k+1 , r}^*} \F_{k+1 , r}^* \otimes_{\pi^{-1}(\widehat{\mathcal{D}}^{(0)}_{\X, k+1 , \Q})} \pi^{-1}(\M_{k+1}) \\
& \simeq \Fkrs \otimes_{\pi^{-1}(\widehat{\mathcal{D}}^{(0)}_{\X, k+1 , \Q})} \pi^{-1}(\M_{k+1}) .
\end{align*}
Le morphisme de transition $\M_{k+1} \to \M_k$ induit un morphisme de transition $ \pi^{-1}(\M_{k+1}) \to \pi^{-1}(\M_k)$ et donc un morphisme $\Fkrs$-linéaire
\[ \Fkrs \otimes_{\F_{k+1 , r}^*} \M_{k+1 , r}^* \simeq \Fkrs \otimes_{\pi^{-1}(\widehat{\mathcal{D}}^{(0)}_{\X, k+1 , \Q})} \pi^{-1}(\M_{k+1}) \to \Mkrs = \Fkrs \otimes_{\pi^{-1}(\Dkq)} \pi^{-1}(\M_k) .\]
On vérifie ensuite que ce morphisme est un isomorphisme sur les germes. En effet, comme les modules $\M$ et $\Mr$ sont coadmissibles, on obtient
\begin{align*}
& (\Fkrs \otimes_{\F_{k+1 , r}^*} \M_{k+1 , r}^*)_{(x , \xi)}  \simeq \\
& \hspace{2cm}
\begin{cases}
(\F_{k, r})_x \otimes_{(\F_{k+1 , r})_x} (\tilde{\M}_{k+1 , r})_x \simeq (\tilde{\M}_{k , r})_x \simeq (\Mkrs)_{(x , \xi)} \hspace{1.1cm} \mathrm{si}~~ \xi \neq 0 \\
(\Dkq)_x \otimes_{(\widehat{\mathcal{D}}^{(0)}_{\X, k+1 , \Q})_x} (\M_{k+1})_x \simeq (\M_k)_x \simeq (\Mkrs)_{(x , \xi)} \hspace{0.8cm} \mathrm{si}~~ \xi = 0 .
\end{cases}
\end{align*}
On en déduit que $\Fkrs \otimes_{\F_{k+1 , r}^*} \M_{k+1 , r}^* \simeq \Mkrs$. Autrement dit, le module $\Mrs$ est coadmissible.
\end{proof}

\begin{remark}
Soient $\M$ et $\Nn$ deux $\Di$-modules coadmissibles. On suppose qu'il existe un morphisme de $\Di$-modules coadmissibles $\M \to \Nn $ induisant un isomorphisme $\Mrs \simeq \mathcal{\Nn}_r^*$ pour un certain entier $r \geq 1$. Alors $\M \simeq \Nn$.
En effet, l'hypothèse $\Mrs \simeq \mathcal{\Nn}_r^*$ implique que $(\Mrs)_{(x , 0)} \simeq \M_x \simeq (\mathcal{\Nn}_r^*)_{(x,0)} \simeq \Nn_x$ pour tout point $x$ de $\X$.
Le morphisme initial $\M \to \Nn$ est donc un isomorphisme de $\Di$-modules coadmissibles.
\end{remark}

Il a été vu que la suite des supports $(\supp \Mkr)_k$ est croissante pour $r\geq 1$ fixé.
De m\^eme, la suite $(\supp \M_k)_k$ est aussi croissante.
On en déduit, toujours pour $r$ fixé, que la suite des supports des modules $\Mkrs$ est croissante.
Le support du $\Firs$-module coadmissible $\Mrs$ est définie comme pour $\Mr$ par la fermeture de l'union croissante des supports des $\Fkrs$-modules cohérents $\Mkrs$ : 
\[ \supp \Mrs := \overline{ \bigcup_{k \geq r} \supp \Mkrs} \subset T^*\X . \]
Lorsque $\M$ est un $\Di$-module coadmissible non nul, les supports $\supp (\Mrs)$ sont des parties fermées coniques de $T^*\X$.
On en déduit que le support $\supp (\Mrs)$ est une aussi une partie fermée conique du fibré cotangent $T^*\X$.

\begin{example}\label{ex6.3}
On suppose que la courbe formelle $\X$ est affine munie d'une coordonnée étale. On rappelle que $T^*X$ et $T^*\X$ ont le même espace topologique.
On note $(x , \xi)$ les coordonnées locales de $T^*\X$ associées à la coordonnée étale donnée de $\X$.
Soit $P \in \Di(\X)$. On considère le $\Di$-module coadmissible $\M = \Di / P = \varprojlim_k \M_k$ avec $\M_k = \Dkq / P$.
On a $\Mkr = \F_{k,r} / P$, $(\Mkrs)_{(x , \xi)} \simeq (\Mkr)_x$ si $\xi \neq 0$ et $(\Mkrs)_{(x , \xi)} \simeq (\M_k)_x$ si $\xi = 0$.

\begin{enumerate}
\item
Cas d'un opérateur fini $P = \sum_{n=0}^d a_n \cdot \partial^n$ d'ordre $d$.
Pour $k$ assez grand, $\Nb(P) = d$. A partir de ce rang là et pour $r$ suffisamment grand, le support $\supp (\Mkrs)$ du module $\Mkrs$ coincide avec la variété caractéristique du module $\Dkq / P$.
En effet, pour $r$ et $k$ assez grand, le support du module $\Mkrs$ est donné par
\begin{center}
\begin{tikzpicture}

\draw[thick][->] (-2,0) -- (8,0);
\draw (8.2,0) node[right] {$x$};
\draw [thick][->] (0,-1) -- (0,3.5);
\draw (0,3.7) node[above] {$\xi$};
\draw (0,0) node[below right] {$0$};

\draw[red][thick] (-1.5,0) -- (7,0);

\draw[red][thick] (-1, -1) -- (-1,3) ;
\draw (-1,0) node[below right]{$x_1$} ;

\draw[red][thick] (1.8, -1) -- (1.8,3) ;
\draw (1.8,0) node[below right]{$x_2$} ;

\draw[red][thick] (3.5 , -1) -- (3.5,3) ;
\draw (3.5,0) node[below right]{$x_i$} ;

\draw[red][thick] (6, -1) -- (6,3) ;
\draw (6,0) node[below right]{$x_s$} ;

\draw (2.5,-1.5) node[right]{\textcolor{red}{$\supp (\Mkrs)$}} ;
\draw (7,3) node{$T^*\X$} ;

\end{tikzpicture}
\end{center}
avec $x_1, \dots , x_s$ les zéros du coefficient dominant $a_d$ de $P$.
Ce support admet la section nulle comme composante irréductible si et seulement si $d >0$.
Soit $(x , \xi)$ un point du fibré cotangent $T^*\X$. Si $(x , \xi) \in s(\X)$, alors $(\Mkrs)_{( x , \xi)} = (\Dkq /P)_x$. Si $d > 0$, alors $P$ n'est jamais inversible dans $(\Dkq)_x$.
En effet, pour que $P$ soit inversible dans $(\Dkq)_x$, une condition nécessaire est d'avoir $\Nb(P) = 0$.
Or pour $k$ suffisament grand, $\Nb(P) = d > 0$.
On en déduit que $(\Mkrs)_{( x , 0)} = (\Dkq /P)_x \neq 0$. Autrement dit, le support du module $\Mkrs$ contient la section nulle.
Les variétés caractéristiques $\Car(\Dkq / P)$ la contiennent aussi. Si $d = 0$, alors $P = a$ est une fonction de $\O_{\X , \Q}(\X)$.
Le support du module $\Dkq / a$ est l'ensemble des zéros de $a$ (en dehors de ces zéros, $a$ est inversible dans l'algèbre affinoïde $\O_{\X , \Q}(\X)$).
Dans ce cas, le support de $\Mkrs$ coupe la section nulle seulement en les zéros $x_1 , \dots , x_s$ de $a$, tout comme les variétés caractéristiques $\Car(\Dkq / P)$ pour $k$ suffisamment grand.
Si maintenant $(x , \xi) \in T^*\X \backslash s(\X)$, alors $(\Mkrs)_{( x , \xi)} = (\F_{k , r} /P)_x$.
Il a déjà été vu dans l'exemple \ref{ex5.6} que pour $r$ et $k$ assez grands, $(\F_{k , r} /P)_x \neq 0$ si et seulement si $x$ est un zéro du coefficient dominant $a_d$ de $P$.
Le support du module $\Mkrs$ contient ainsi les droites verticales d'équations $ x = x_i$, où $x_1 , \dots , x_s$ sont les zéros de $a_d$, auxquelles on a retiré les points de la section nulle.
Mais ces points de la section nulle sont compris dans le cas traité précédemment.
Pour résumer, les supports $\supp (\Mkrs)$ ne dépendent plus de $k$ pour $k$ suffisamment grand et $\supp (\Mkrs) = \Car(\M_k)$ en tant qu'espace topologique.
On en déduit que pour $r$ suffisamment grand, le support du module $\Mrs$ est donné par l'ensemble des points $(x , \xi) \in T^*\X$ vérifiant $a_d(x) \cdot \xi^d = 0$.
\item
Cas d'un opérateur différentiel infini $P$.
Pour $(x , 0) \in s(\X)$, on a $(\Mkrs)_{( x , 0)} = (\Dkq /P)_x$. La suite $(\Nb(P))_k$ diverge vers $+ \infty$ et pour $k$ suffisamment grand, $P$ n'est pas inversible dans $(\Dkq)_x$.
Ainsi, $(\Dkq /P)_x \neq 0$ et le support du module $\Mkrs$ contient la section nulle pour $k$ assez grand.
Enfin, pour tout point $(x , \xi) \in T^*\X \backslash s(\X)$ et pour $k$ suffisamment grand, $(\Mkrs)_{( x , \xi)} = (\F_{k , r} /P)_x \neq 0$ d'après l'exemple \ref{ex5.6}.
On en déduit que $\supp (\Mkrs) = T^*\X$ pour $k$ suffisamment grand.
Donc si $P$ est infini, alors $\supp (\Mrs) = T^*\X$ quelque soit l'entier $r \in \N^*$.
\end{enumerate}
La proposition suivante résume ce qui est dit dans cet exemple.
\end{example}

\begin{prop}\label{prop5.13}
Supposons la courbe formelle $\X$ affine munie d'une coordonnée locale. Soit $P \in \Di(\X)$ et $\M = \Di / P$.
Si $P$ est un opérateur infini, alors $\supp(\Mrs) = T^*\X$. Sinon, il existe un rang $r_0$ suffisamment grand tel que pour tous entiers $k \geq r \geq r_0$, $\supp(\Mrs) = \supp(\Mkrs) = \Car(\Dkq / P)$.
\end{prop}

\subsubsection*{Inégalité de Bernstein}

Soit  $\M = \varprojlim_k \M_k$ un $\Di$-module coadmissible et $r \geq 1$.
Rappelons que le support de $\Mrs$ est défini par $\supp \Mrs = \overline{\bigcup_{k \geq r} \supp \Mkrs }$.
On déduit de la proposition \ref{prop5.7} l'inégalité de Bernstein pour le support du $\Firs$-module coadmissible $\Mrs$.
En effet, si $\dim (\supp \Mrs) \geq 1$, alors $\dim (\supp \Mkrs) \geq 1$ pour $k$ suffisamment grand.
Autrement dit, $\M$ est non nul si et seulement si $\dim(\supp\Mrs) \geq 1$.
Le fibré cotangent $T^*\X$ de $\X$ est noethérien, les supports des $\Fkrs$-modules cohérents $\Mkrs$ n'ont qu'un nombre fini de composantes irréductibles.

\begin{prop}[Inégalité de Bernstein]\label{cor5.12}
Soit $\M$ un $\Di$-module coadmissible non nul.
Alors $\supp (\Mrs)$ est égal \`a $T^*\X$ ou bien équidimensionnel de dimension 1.
Dans le second cas, $\supp(\Mrs) = \supp (\Mkrs)$ pour $k$ suffisamment grand.
\end{prop}
\begin{proof}
Soit $\M = \varprojlim_k \M_k$ un $\Di$-module coadmissible non nul.
Si la dimension du support $\supp \Mrs$ est deux, alors $\supp \Mrs = T^*\X$ puisque le fibré cotangent $T^*\X$ est irréductible.
On suppose maintenant que $\dim (\supp \Mrs) = 1$. On a vu que $\dim (\supp \Mkrs) = 1$ pour $k$ suffisamment grand.
On ne considère plus que ces niveaux de congruences $k$.
Le support $\supp \Mrs$ n'a qu'un nombre fini de composantes irréductibles. Elles sont toutes de dimension un.
En effet, si une composante irréductible de $\supp \Mrs$ est réduite à un point, alors ce point appartient à l'un des supports $\supp \Mkrs$ pour $k$ suffisamment grand.
Mais les composantes irréductibles des supports $\supp \Mkrs$ sont toutes de dimension un d'après la proposition \ref{prop5.7}. 
Comme $\supp \Mkrs \subset \supp \Mrs$, le support $\supp \Mrs$ contient cette composante de dimension un.
Ainsi, ce point ne peut être une composante irréductible de $\supp \Mrs$.
Les composantes irréductibles des supports $\supp \Mkrs$ et du support $\supp \Mrs$ sont donc toutes de dimension un.
Pour une raison de dimension, les composantes irréductibles des supports $\supp \Mkrs$ sont des composantes irréductibles du support $\supp \Mrs$.
On en déduit que $\supp \Mrs = \supp \Mkrs$ à partir d'un certain niveau de congruence $k \geq r$.
En effet, puisque le support $\supp \Mrs$ n'a qu'à nombre fini de composantes irréductibles, les supports sont nécessairement égaux à partir d'un niveau de congruence $k$ suffisamment grand. 
\end{proof}

Si $\dim (\supp \Mrs) = 1$, alors $\dim (\supp \Mkrs) = 1$ pour $k$ suffisamment grand.
La proposition \ref{prop5.5} implique que les $\Dkq$-modules cohérents $\M_k$ sont holonomes pour $k \geq r$.
La réciproque est néanmoins fausse : les $\Dkq$-modules cohérents $\M_k$ peuvent être holonomes pour tout niveau de congruence $k$ (ie $\dim (\Car \M_k) \leq 1$) tout en ayant $\dim(\supp \Mrs) = 2$.
C'est le cas par exemple pour $\M = \Di / P$ dès que $P$ est un opérateur différentiel infini de $\Di(U)$. En effet, il a été vu dans l'exemple \ref{ex6.3} qu'alors $\supp \Mrs = T^*\X$.
Cependant, d'après la proposition \ref{prop3.22}, les $\Dkq$-modules cohérents $\Dkq / P$ sont holonomes.

\subsection{Variété caractéristique}\label{section5.3}

Soit $\M = \varprojlim_k \M_k$ un $\Di$-module coadmissible. Pour tout entier $k \geq r$, on rappelle que 
$\Mkr  = \F_{k,r} \otimes_{\Dkq} \M_k$ et que $\Mr = \varprojlim_{k \geq r} \Mkr $ est un $\Fir$-module coadmissible.
Pour $k \geq r+1$, on note $j_{k,r} : \F_{k,r} \to \F_{k , r+1}$ le morphisme induit localement par les inclusions d'alg\`ebres  $\F_{k,r}(U) \subset \F_{k , r+1}(U)$. Les morphismes $\F_{k,r}$-linéaires
\[ j_{k,r} \otimes \id : \Mkr = \F_{k,r} \otimes_{\Dkq} \M_k \to \tilde{\M}_{k ,r+1} = \F_{k,r+1} \otimes_{\Dkq} \M_k\]
commutent avec les morphismes de transition, ie le diagramme suivant commute :
\[ \xymatrix@R=3pc @C=5pc{ \Mkr \ar[r]^{j_{k,r} \otimes \id} \ar[d]_{g_{k,r}} & \tilde{\M}_{k , r+1} \ar[d]^{g_{k,r+1}} \\ \tilde{\M}_{k+1 , r} \ar[r]_{j_{k+1,r} \otimes \id} & \tilde{\M}_{k+1 , r+1} \, . } \]
En passant à la limite projective sur $k \geq r+1$, on obtient un morphisme continu $\Fir$-linéaire $j_r : \Mr \to \tilde{\M}_{\infty , r+1}$. On a
\begin{align*}
\F_{k , r+1} \otimes_{\F_{k,r}} \Mkr & = \F_{k , r+1} \otimes_{\F_{k,r}} \F_{k,r} \otimes_{\Dkq} \M_k \\
& \simeq \F_{k , r+1} \otimes_{\Dkq} \M_k \\
& \simeq \tilde{\M}_{k , r+1}
\end{align*}
en tant que $\Dkq$-modules. Puisque la structure de $\F_{k , r+1}$-modules étend celle de $\Dkq$-modules, on obtient un isomorphisme $\F_{k , r+1} \otimes_{\F_{k,r}} \Mkr \simeq \tilde{\M}_{k , r+1}$ de $\F_{k , r+1}$-modules.
On en déduit que $\supp (\tilde{\M}_{k , r+1}) \subset \supp (\Mkr)$ et que $\supp (\tilde{\M}_{\infty ,  r+1}) \subset \supp (\Mr)$.
De manière analogue, on définit pour les $\Firs$-modules coadmissibles $\Mrs$ des morphismes de transition induits par les morphismes $\Fkrs \to \F_{k , r+1}^*$ pour chaque niveau de congruence $k \geq r +1$.
Pour $(x , \xi) \in T^*\X$, le morphisme $ (\Mrs)_{(x , \xi)} \to (\M_{r+1}^*)_{(x , \xi)}$ correspond aux morphismes
\[
\begin{cases}
(\Mr)_x \longrightarrow (\tilde{\M}_{\infty , r+1})_x \hspace{1cm} \mathrm{si}~~ \xi \neq 0 \\
(\M)_x \overset{\id}{\longrightarrow} (\M)_x \hspace{2.49cm} \mathrm{si}~~ \xi = 0.
\end{cases}
\]
Les inclusions $\supp (\tilde{\M}_{\infty ,  r+1}) \subset \supp (\Mr)$ pour $r \geq 1$ induisent des inclusions des supports $\supp (\M^*_{r+1}) \subset  \supp (\Mrs)$.

\begin{definition}
La variété caractéristique d'un $\Di$-module coadmissible $\M$ est d\'efinie comme l'intersection de la suite décroissante des supports $\supp(\Mrs)$ : 
\[ \Car(\M) := \bigcap_{r \geq 1} \supp (\Mrs) \subset T^*\X . \]
\end{definition}

Puisque les supports $\supp(\Mrs)$ sont ferm\'es dans $T^*\X$, la vari\'et\'e caractéristique est une partie fermée du fibré cotangent $T^*\X$ de $\X$.

\begin{example}\label{ex5.5}
Reprenons l'exemple \ref{ex6.3} : la courbe formelle $\X$ est affine et $\M = \Di / P$ pour un opérateur $P \in \Di(\X)$.
Rappelons que $\Mkr = \F_{k,r} /P$ et que $\Mr = \Fir / P$.

\begin{enumerate}
\item
Si $P$ est un opérateur infini, alors $\supp (\Mr) = \X$ pour tout $r \geq 1$ d'après l'exemple \ref{ex6.3}. Ainsi, $\Car (\M) = T^*\X$.
\item
Si $P = \sum_{n=0}^d a_n \cdot \partial^n$ est un opérateur fini d'ordre $d$, alors il existe un rang $r_0 \geq 1$ tel que $\supp (\Mr) = V(a_d)$ pour tout $r \geq r_0$ et $\supp (\Mr) = \X$ pour tout $r < r_0$.
Pour $r \geq r_0$, ces supports sont l'ensemble des abscisses des composantes irréductibles des variétés caractéristiques $\Car\left( \Dkq / P \right)$ pour $k$ assez grand.
L'exemple \ref{ex6.3} montre que pour $r$ suffisamment grand, le support de $\Mrs$ coïncide avec la variété caractéristique du module cohérent $\Dkq / P$ pour $k \geq r$ suffisamment grand.
Ainsi, la vari\'et\'e caract\'eristique $\Car(\M) = \cap_{ r \geq 1 } \supp (\Mrs)$ coincide les variétés caractéristiques $\Car\left( \Dkq / P \right)$ pour $k$ suffisamment grand :
\[ \Car (\M) = \left\{ (x , \xi) \in T^*\X ~ : ~ a_d(x) \cdot \xi^d = 0 \right\} = \Car(\M_k) . \]
\end{enumerate}
\end{example}

On termine cette partie par la preuve de l'inégalité de Bernstein : un $\Di$-module coadmissible $\M$ est non nul si et seulement si $\dim (\Car(\M)) \geq 1$.

\begin{prop}\label{bernsteincoad}[Inégalité de Bernstein]
Soit $\M = \varprojlim_k \M_k$ un $\Di$-module coadmissible non nul.
Il existe un rang $r \in \N^*$ à partir duquel $\Car(\M) = \supp (\Mrs)$.
De plus, $\Car(\M)$ est égal \`a $T^*\X$ ou bien équidimensionnel de dimension 1.
Dans le second cas, $\Car(\M) = \supp (\Mkrs)$ pour $k$ suffisamment grand (dépendant de $r$).
\end{prop}
\begin{proof}
Si $\dim(\supp \Mrs) =2$ pour tout $r \geq 1$, alors $\supp (\Mrs) = T^*\X$ et donc $\Car (\M) = T^*\X$.
Sinon, $\dim(\supp \Mrs) = 1$ à partir d'un rang $r_0 \geq 1$ et $\dim \Car(\M) \leq 1$.
On ne considère maintenant que les rangs $r$ plus grands que $r_0$.
Puisque $\supp \M_{r+1}^* \subset \supp \Mrs$ et comme les composantes irréductibles des supports $\supp \Mrs$ sont toutes de dimension un, les composantes irréductibles de $\supp \M_{r+1}^*$ sont des composantes irréductibles de $\supp \Mrs$.
On suppose que $\dim \left( \bigcap_{r \geq 1} \supp \Mrs \right) = 0$. Soit $L$ une composante irréductible de $\supp \M_{r_0}^*$ ; elle est de dimension un. 
Si $L$ est une composante irréductible de tous les supports $\supp \Mrs$, alors $L$ est aussi une composante irréductible de $\Car(\M) = \bigcap_{r \geq 1} \supp \Mrs$.
Cela contredit l'hypothèse $\dim \Car(\M) = 0$. On en déduit que $L$ n'est plus une composante irréductible des supports $\supp \Mrs$ pour $r$ assez grand.
Enfin, comme le support $\supp \M_{r_0}^*$ n'a qu'un nombre fini de composantes irréductibles, aucune composante irréductible de $\supp \M_{r_0}^*$ n'est une composante irréductible des supports $\supp \Mrs$ pour un rang $r$ suffisamment grand.
Il en découle que $\dim (\supp \Mrs) = 0$. Alors $\M = 0$ d'après la proposition \ref{cor5.12}.
Puisque $\M \neq 0$, on en déduit que $\dim (\Car \M) = 1$ et que les composantes irréductibles de $\Car \M$ sont les composantes irréductibles communes à tous les supports $\supp \Mrs$.
Les supports des modules $\Mrs$ et la variété caractéristique $\Car(\M)$ n'ont qu'un nombre fini de composantes irréductibles.
On en déduit que $\Car(\M) = \supp \Mrs$ à partir d'un certain rang $r$.
Enfin, on sait toujours d'après la proposition \ref{cor5.12} que $\supp \Mrs = \supp \Mkrs$ pour un niveau de congruence $k$ suffisamment grand.
\end{proof}

\section{$\Di$-modules sous-holonomes}\label{section6}

Un $\Di$-module coadmissible $\M$ est appelé sous-holonome si sa variété caractéristique est de dimension inférieure ou égale à $\dim(\X) = 1$.
On démontre dans la partie \ref{section6.1} que $\M$ est sous-holonome si et seulement si $\M$ est génériquement un module à connexion intégrable.
Par ailleurs, un tel module est sous-holonome au sens de Ardakov-Bode-Wadsley.
Enfin on associe dans la section \ref{section6.2} un cycle caractéristique à tout $\Di$-module sous-holonome. On en déduit que ces modules sont de longueur finie.

\subsection{Définition et propriétés}\label{section6.1}

On associe à tout $\Di$-module coadmissible $\M$ un support \og infini \fg\, définit comme étant la partie fermée de $\X$ donnée par l'intersection des fermés $\supp(\Mr)$ :
\[ \suppi (\M) := \bigcap_{r \geq 1} \supp (\Mr) \subset \X. \]
La variété caractéristique $\Car(\M)$ contient la droite verticale d'abscisse $x \in \X$ si et seulement si $x \in \suppi(\M)$.
Ainsi, $\dim(\Car(\M)) \leq 1$ si et seulement si $\dim(\suppi(\M)) = 0$.
Dans ce cas, les abscisses des composantes verticales de la variété caractérisque $\Car(\M)$ sont exactement les points du support $\suppi(\M)$.

\begin{definition}
Un $\Di$-module coadmissible $\M$ est sous-holonome si $\dim (\Car(\M)) \leq 1$.
De manière équivalente, $\M$ est sous-holonome si et seulement si $\dim(\suppi(\M)) = 0$.
\end{definition}

Soit $\M = \varprojlim_k \M_k$ un $\Di$-module coadmissible sous-holonome. D'après l'inégalité de Bernstein, ou bien $\M = 0$ et $\Car(\M) = \emptyset$, ou bien $\M \neq 0$ et $\dim \Car(\M) = 1$.
Par ailleurs, les $\Dkq$-modules cohérents $\M_k$ sont holonomes pour un niveau de congruence $k$ suffisamment grand.
En effet, $\Car (\M_k) \subset \supp (\M_{k , r}^*) \subset \supp (\M^*_{\infty , r}) =  \Car(\M)$ pour $k \geq r$ assez grands. Il en découle que $\dim( \Car \M_k) \leq 1$.
Le résultat suivant découle de la proposition \ref{prop5.13}.

\begin{prop}
On suppose la courbe formelle $\X$ affine munie d'une coordonnée locale.
Soit $P$ un opérateur différentiel non nul de $\Di(\X)$.
Alors le $\Di$-module coadmissible $\Di / P$ est sous-holonome si et seulement si $P$ est un opérateur différentiel fini.
Dans ce cas, $\Car(\M) = \Car(\M_k)$ pour tout niveau de congruence $k$ suffisamment grand.
\end{prop}

On montrera dans la partie \ref{section6.2} que l'\'egalit\'e $\Car(\M) = \Car(\M_k)$ est v\'erifi\'ee plus g\'en\'eralement d\`es qu'un $\Di$-module coadmissible $\M$ est holonome.
La proposition suivante implique que la catégorie des $\Di$-modules sous-holonomes est abélienne.

\begin{prop}\label{prop6.1.5}
Soit $\xymatrix{ 0 \ar[r] & \M \ar[r] & \Nn \ar[r] & \L \ar[r] & 0 }$ une suite exacte de $\Di$-modules coadmissibles.
Alors $\Car(\Nn) = \Car(\M) \cup \Car(\L)$. En particulier, $\Nn$ est sous-holonome si et seulement si $\M$ et $\L$ le sont aussi.
\end{prop}
\begin{proof}
On note $\M = \varprojlim_k \M_k$, $\Nn = \varprojlim_k \Nn_k$ et $\L = \varprojlim_k \L_k$.
Par définition, $\M_k$, $\Nn_k$ et $\L_k$ sont des $\Dkq$-modules cohérents.
Par définition des modules coadmissibles, la suite exacte courte initiale induit une suite exacte courte $\xymatrix{ 0 \ar[r] & \M_k \ar[r] & \Nn_k \ar[r] & \L_k \ar[r] & 0 }$ de $\Dkq$-modules pour tout niveau de congruence $k \in \N$.
On rappelle que $\Mkr = \F_{k , r} \otimes_{\Dkq} \M_k $, $\tilde{\Nn}_{k , r} = \F_{k , r} \otimes_{\Dkq} \Nn_k $ et $\tilde{\L}_{k , r} = \F_{k , r} \otimes_{\Dkq} \L_k$.
Puisque le morphisme $\Dkq \to \F_{k , r}$ est plat d'après la proposition \ref{prop2.28}, on en déduit une suite exacte courte $\xymatrix{ 0 \ar[r] & \Mkr  \ar[r] &  \tilde{\Nn}_{k , r}  \ar[r] &  \tilde{\L}_{k , r}  \ar[r] & 0 }$ de $\F_{k , r}$-modules cohérents.
Pour $k \geq r$, on obtient en passant au fibre cotangent $T^*\X$ une suite exacte courte de $\Fkrs$-modules cohérents $\xymatrix{0 \ar[r] & \Mkrs  \ar[r] &  \Nn_{k , r}^*  \ar[r] &  \L_{k , r}^*  \ar[r] & 0 }$.
L'égalité suivante des supports de ces modules en découle :
\begin{equation}\label{eqsupp}
\supp (\Nn_{k , r}^*) = \supp (\Mkrs ) \, \cup \, \supp (\L_{k , r}^*) .
\end{equation}
D'après la proposition \ref{bernsteincoad}, il existe un rang $r \geq 1$ pour lequel $\Car(\M) = \supp (\Mrs)$, $\Car(\Nn) = \supp (\Nn_r^*)$ et $\Car(\L) = \supp (\L_r^*)$.
On suppose dans un premier temps que les modules $\M$ et $\L$ sont tous deux sous-holonomes.
Il existe alors un niveau de congruence $k_r \geq r$ tel que $\Car(\M) = \supp (\Mrs) = \supp (\Mkrs)$ et $\Car(\L) = \supp (\L_r^*) = \supp (\L_{k , r}^*)$ pour tout entier $k \geq k_r$.
La relation \ref{eqsupp} montre que pour tout $k \geq k_r$,
\[ \supp( \Nn_{k , r}^*) =  \supp (\Mkrs ) \, \cup \, \supp (\L_{k , r}^*) = \Car(\M) \, \cup \, \Car(\L) .\]
Puisque la suite des supports $(\Nn_{k , r}^*)_{k \geq r}$ est croissante, on en déduit que pour tout $r$ suffisamment grand,
\[ \supp \Nn_r^* = \overline{ \bigcup_{k \geq r} \supp \Nn_{k , r}^*} = \Car(\M) \, \cup \, \Car(\L) .\]
Comme la suite des supports $( \supp \Nn_r^*)_r$ est décroissantes, il en découle que
\[ \Car(\Nn) = \bigcap_{r \geq 1} \supp \Nn_r^* =\Car(\M) \, \cup \, \Car(\L) .\]
Autrement dit, $\Nn$ est sous-holonome dès que $\M$ et $\L$ le sont.
En utilisant toujours \ref{eqsupp}, on montre de façon analogue que si $\Nn$ est sous-holonome, alors l'égalité des supports tient encore.
Ainsi, on a démontré que $\Nn$ est sous-holonome si et seulement si $\M$ et $\L$ le sont aussi ; dans ce cas $\Car(\Nn) = \Car(\M) \cup \Car(\L)$.
Si les modules ne sont pas sous-holonomes, alors la proposition \ref{bernsteincoad} implique que $\Car(\Nn) = T^*\X =  \Car(\M) \cup \Car(\L)$.
Autrement dit, l'égalité des supports tient dans tous les cas possibles.
\end{proof}

\begin{example}
Un $\Di$-module coadmissible $\M$ est dit algébrique s'il existe un $\D^{(0)}_{\X , \Q}$-module cohérent $\M_{\text{alg}}$ tel que $\M \simeq \Di \otimes_{\D^{(0)}_{\X , \Q}} \M_{\text{alg}}$.
Tout $\Di$-module algébrique faiblement holonome $\M$ est sous-holonome.
En effet, on peut supposer $\X$ affine muni d'une cordonnée locale et $\M_{\text{alg}} \simeq \D^{(0)}_{\X , \Q} \cdot m$ monogène.
Puisque $\M$ est faiblement holonome, les $\Dkq$-modules cohérents $\M_k \simeq \Dkq \otimes_{\D^{(0)}_{\X , \Q}} \M_{\text{alg}} $ sont holonomes.
On en déduit que $\M_{\text{alg}} \not\simeq \D^{(0)}_{\X , \Q}$ car sinon $\M_k \simeq \Dkq$ ne serait pas holonome.
En particulier, $\M_{\text{alg}} \simeq \D^{(0)}_{\X , \Q} / \I$ pour un idéal cohérent non nul $\I$ de $\D^{(0)}_{\X , \Q}$.
Soit $P \in \I$ un opérateur différentiel non nul. On dispose d'une suite exacte $\D^{(0)}_{\X , \Q} / P \to \M_{\text{alg}} \to 0$.
Après tensorisation par $\Di$, on obtient une surjection $\Di / P \to \M \to 0$.
Puisque $P$ est un opérateur fini, le module $\Di / P$ est sous-holonome.  Il en est donc de même pour $\M$.
\end{example}

On identifie dans la suite la courbe formelle $\X$ avec la section nulle $s : \X \to T^*\X$ de son fibré cotangent $T^*\X$.

\begin{definition}
Un $\Di$-module coadmissible $\M$ est appel\'e un module à connexion intégrable si $\Car(\M) \subset \X$.
\end{definition}

On démontre, comme le nom choisit le laisse deviner, que les modules à connexion intégrables sont exactement les $\O_{\X , \Q}$-modules localement libres de rang fini.
La proposition suivante fait le lien entre les modules à connexion intégrable et les modules sous-holonomes.

\begin{prop}\label{prop7.2.4}
Un $\Di$-module coadmissible $\M$ est sous-holonome si et seulement si il existe un ouvert dense $U$ de $\X$ pour lequel $\M_{|U}$ est un module à connexion intégrable.
\end{prop}
\begin{proof}
On suppose dans un premier temps que $\M$ est sous-holonome. La variété caractéristique de $\M$ a un nombre fini de composantes irréductibles verticales.
Soit $U = \X \backslash \suppi(\M)$ l'ouvert de $\X$ obtenu en ôtant les abscisses des composantes verticales de $\Car(\M)$. 
On a $\Car(\M_{|U}) \subset U$. Autrement dit, $\M_{|U}$ est un module à connexion intégrable. 
Réciproquement, soit $U$ un ouvert non vide de $\X$ tel que $\M_{|U}$ soit un module à connexion intégrable.
Par définition, $\Car(\M_{|U}) \subset U$.
Si $\M$ n'est pas holonome, alors $\Car(\M) = T^*\X$ d'après la proposition \ref{bernsteincoad}.
On aurait alors $\Car(\M_{|U}) = T^*U$. Cela contredit l'hypothèse $\Car(\M_{|U}) \subset U$.
On en déduit que le module $\M$ est sous-holonome.
\end{proof}

Si $\M = \varprojlim_k \M_k$ est une connexion intégrable, alors $\Car(\M_k) \subset \Car(\M) \subset X$ pour tout niveau de congruence $k$ suffisamment grand.
La proposition \ref{prop3.22} implique que chaque module $\M_k$ est localement un $\O_{\X , \Q}$-module libre de rang fini.
On démontre maintenant que ces rangs ne dépendent pas de $k$.
On en déduit que le $\Di$-module coadmissible $\M$ est alors un $\O_{\X , \Q}$-module localement libre de rang fini.

\begin{lemma}\label{lemmerangfini}
Soit $\M = \varprojlim_k \M_k$ tel que $\Car(\M) = \X$.
Alors pour $k$ suffisamment grand, les $\M_k$ sont localement des $\O_{\X , \Q}$-modules libres de même rang fini.
\end{lemma}
\begin{proof}
On suppose la courbe formelle $\X$ affine munie d'une coordonnée locale.
Soit $r \in \N^*$ un rang pour lequel $\supp (\Mrs) = \Car(\M) = \X$.
On ne considère maintenant que les niveaux de congruences $k \geq r$ pour lesquels $\Car(\M_k) = X$.
Soit $x$ un point ferm\'e de $\X$. Quitte à étendre les scalaires par une extension finie de $K$, on peut supposer que $x$ est un point $\kappa$-rationnel.
D'après le lemme 3.23 de \cite{hallopeau1}, il existe un voisinage ouvert $U_k$ de $x$ dans $\X$ et un opérateur différentiel $P_k$ de $\Dkq(U_k)$ fini dominant unitaire tel que $\M_{k|U_k} \simeq \widehat{\mathcal{D}}^{(0)}_{U_k, k, \Q} / P_k$ soit un $\O_{U_k , \Q}$-module libre de rang l'ordre $d_k$ de $P_k$.
Il suffit de montrer que ce rang $d_k$ ne dépend pas de $k$.
Puisque $\M = \varprojlim_k \M_k$ est un $\Di$-module coadmissible, on a
\[ \widehat{\mathcal{D}}^{(0)}_{U_k\cap U_{k+1}, k, \Q} \otimes_{\widehat{\mathcal{D}}^{(0)}_{U_k\cap U_{k+1}, k+1 , \Q}} \widehat{\mathcal{D}}^{(0)}_{U_k\cap U_{k+1}, k+1 , \Q} / P_{k+1} \simeq \widehat{\mathcal{D}}^{(0)}_{U_k\cap U_{k+1}, k, \Q} / P_k .\]
Autrement dit, $\widehat{\mathcal{D}}^{(0)}_{U_k\cap U_{k+1}, k, \Q} / P_{k+1} \simeq \widehat{\mathcal{D}}^{(0)}_{U_k\cap U_{k+1}, k, \Q} / P_k$ et \cite[corollaire 2.16]{hallopeau1} implique que $\Nb(P_k) = \Nb(P_{k+1})$.
De plus, on sait par le lemme \ref{lemmedeg} que $\overline{N}_{k+1}(P_{k+1}) \geq \Nb(P_{k+1})$.
On en déduit que la suite $(\Nb(P_k))_k$ est croissante. Il faut donc prouver qu'elle est stationnaire.
On suppose par l'absurde qu'elle diverge vers l'infini.
Soit $k \geq r$ un entier tel que $d_k = \Nb(P_k) > d_{k-1} = \overline{N}_{k-1}(P_k) =\overline{N}_{k-1}(P_{k-1})$.
L'hypothèse
\[ \supp\left((\Mkrs)_{|\pi^{-1}(U_k)}\right) = \supp\left( (\Fkrs)_{|\pi^{-1}(U_k)} / P_k\right) \subset \Car\left((\M)_{|U_k}\right) = U_k \]
implique que l'opérateur $P_k$ est inversible dans le microlocalisé $\F_{k,r}(U_k)$.
Autrement le support du module $(\Mkrs)_{|\pi^{-1}(U_k)}$ contiendrait des composantes irréductibles verticales.
On écrit $P_k = \sum_{n=0}^{d_k} a_n \cdot \partial^n$, où $a_n \in \O_{\X , \Q}(U_k)$.
On rappelle que $P_k \in \F_{k,r}(U_k)^\times$ si et seulement si $\Nb(P_k) = \nb(P_k) = d_k$ et si pour tout $n\in \{ 0 , \dots , d_k-1 \}$, $|a_n|  < |a_{d_k}| \cdot p^{r(d_k-n)}$.
Pour $n = d_{k-1}$, on obtient
\[ |a_{d_{k-1}} | < |a_{d_k}| \cdot p^{r(d_k - d_{k-1})} . \]
La condition $d_{k-1} < d_k$ implique
\[ |a_{d_{k-1}} | > |a_{d_k}| \cdot p^{(k-1)(d_k - d_{k-1})} . \]
Comme la suite $(\Nb(P_k))_k$ n'est pas bornée, on peut choisir $k > r+1$.
Sous cette hypothèse, les deux inégalités précédentes sont incompatibles !
On en déduit par l'absurde que la suite $(d_k)_k$ est stationnaire.
\end{proof}

\begin{prop}\label{propconnex}
Un $\Di$-module coadmissible $\M$ est un module à connexion intégrable si et seulement si $\M$ est localement un $\O_{\X,\Q}$-module libre de rang fini.
\end{prop}
\begin{proof}
On peut se ramener au cas où $\X$ est affine muni d'une coordonnée locale.
On suppose tout d'abord que $\M$ est un $\O_{\X,\Q}$-module libre non nul de rang $n \in \N^*$.
Il existe des sections $e_1 , \dots , e_n$ de $\M(\X)$ telles que $\M \simeq \O_{\X , \Q} \cdot e_1 \oplus \dots \oplus \O_{\X , \Q} \cdot e_n$ en tant que $\O_{\X , \Q}$-module.
Les sections $e_1 , \dots , e_n$ engendrent aussi $\M$ en tant que $\Di$-module : $\M \simeq \Di \cdot e_1 + \dots + \Di \cdot e_n$.
On en déduit que $\Car(\M) \subset \bigcup_{i=1}^n \Car(\Di \cdot e_i)$. Il suffit de montrer que $\Car(\Di \cdot e_i) \subset \X$ (section nulle du fibr\'e cotangent) pour chaque indice $i \in \{ 1 , \dots , n \}$.
On peut donc se ramener au cas $n = 1$ : $\M$ est un $ \O_{\X , \Q}$-module libre de rang un engendré par une section $e$.
Puisque $\M \simeq  \O_{\X , \Q}\cdot e$ en tant que $ \O_{\X , \Q}$-module, la famille $\{ \partial^n \cdot e \}_{n \in \N}$ est liée sur $ \O_{\X , \Q}(\X)$.
Il existe donc un opérateur différentiel unitaire $P \in \Di(\X)$ tel que $P \cdot e = 0$.
Comme $e$ est un g\'en\'erateur de $\M$, il est possible de trouver un tel op\'erateur $P$ de degr\'e un, ie $P = \partial - f$.
Quoiqu'il en soit, $P$ appartient à l'idéal annulateur de $e$ et $\M$ est un $\Di$-module coadmissible quotient de $\Di / P$.
On en déduit que $\Car(\M) \subset \Car(\Di / P)$. Comme $P$ est un opérateur fini de coefficient dominant unitaire, on a  $\Car(\Di / P) = \X$.
Donc $\Car(\Di \cdot e) \subset \X$.
Réciproquement, on suppose que $\M = \varprojlim_k \M_k$ est un module non nul à connexion int\'egrable.
D'après le lemme \ref{lemmerangfini}, il existe un entier $n \in \N^*$ et un niveau de congruence $k_0 \in \N^*$ tels que $\M_k$ soit un $\O_{\X , \Q}$-module libre de rang $n$ pour tout niveau de congruence $k \geq k_0$.
Alors $\M$ est un $\O_{\X , \Q}$-module libre de rang fini $n$ d'après \cite[lemme 4.13]{hallopeau1}.
\end{proof}

Si $\M = \varprojlim_k \M_k$ est un $\Di$-module sous-holonome, alors il existe un ouvert dense $U$ de $\X$ tel que $\M_{|U}$ soit une à connexion intégrable.
La proposition \ref{propconnex} implique que $\M_{|U}$ est un $\O_{|U , \Q}$-module libre de rang fini.

\begin{definition}
La multiplicité horizontale $m_0(\M)$ d'un $\Di$-module sous-holonome $\M$ est définie comme le rang du $\O_{U , \Q}$-module localement libre $(\M)_{|U}$, o\`u $U = \X \backslash \suppi(\M)$.
\end{definition}

\begin{example}\label{ex6.10}
Soit $\M = \varprojlim_k \M_k$ un $\Di$-module coadmissible support\'e en un point ferm\'e $x \in \X$.
Alors $\M$ est sous-holonome et sa variété caractéristique consiste en une composante verticale d'abscisse $x$.
En effet, pour $k \geq r \geq 1$, rappelons que $\Mkr = \F_{k , r} \otimes_{\Dkq} \M_k$.
Puisque pour $k$ suffisamment grand, $\supp(\M_k) = \{x\}$, il en découle que $\supp(\Mkr) \subset \{x\}$.
Ainsi, $\suppi(\M) \subset \{x\}$ et $\M$ est sous-holonome.
Par ailleurs, $\M$ n'est pas g\'en\'eriquement une connexion car sinon son support contiendrait un ouvert non vide de $\X$.
Autrement dit, la variété caractéristique de $\M$ est réduite \`a des composantes irréductibles verticales d'abscisses les éléments de $\suppi(\M)$.
D'apr\`es l'in\'egalit\'e de Bernstein, $\dim \Car(\M) \geq 1$ et donc nécessairement $\suppi(\M) \neq \emptyset$.
On en d\'eduit que $\suppi(\M) = \{x\}$.

\end{example}

Soit $\M = \varprojlim_k \M_k$ un $\Di$-module coadmissible.
Si $\M$ est sous-holonome, alors les modules $\M_k$ sont holonomes pour tout niveau de congruence $k$.
En particulier, $\M$ est faiblement holonome d'après la proposition \ref{prop4.2.4}.
Son dual $\M^\vee$ est un module faiblement holonome. On montre maintenant que si $\M$ est holonome, alors son dual $\M^\vee$ reste holonome.

\begin{prop}
Soit $\M = \varprojlim_k \M_k$ un $\Di$-module coadmissible.
Alors $\M$ est sous-holonome si et seulement si $\M$ est faiblement holonome et si son dual $\M^\vee$ est sous-holonome.
\end{prop}
\begin{proof}
Il suffit de montrer que si $\M$ est sous-holonome, alors son dual $\M^\vee$ l'est aussi.
L'équivalence énoncée provient ensuite de l'isomorphisme de bidualité.
D'après la proposition \ref{prop7.2.4}, il existe un ouvert non vide $U$ de $\X$ tel que $\M_{|U}$ soit un module à connexion.
On peut donc supposer que $\X$ est affine et que $\M$ est un module à connexion de rang $n$.
Toujours d'après la proposition \ref{prop7.2.4}, on peut se ramener à montrer que $\M^\vee$ est un module à connexion intégrable.
Comme $\M = \varprojlim_k \M_k$ est un module à connexion intégrable, les modules $\M_k$ sont des modules à connexion intégrable de rang $n$ pour $k$ suffisamment grand.
Il reste à prouver que leurs duaux $\M_k^\vee = \Ext^1_{\Dkq}(\M_k , \Dkq) \otimes_{\O_{\X , \Q}} \omega_{\X , \Q}^{-1}$ sont aussi des modules à connexion de rang $n$.
D'après \cite[lemme 3.23]{hallopeau1} et quitte \`a r\'eduire $\X$, on peut se ramener au cas où $\M_k \simeq \Dkq / P$ avec $P$ un opérateur différentiel fini unitaire.
Il suffit de vérifier que $\M_k^\vee$ est aussi de la forme $\Dkq / P^t$ avec $P^t$ fini unitaire.
On considère la suite exacte courte de $\Dkq$-modules cohérents $\xymatrix{ 0 \ar[r] & \Dkq \ar[r]^{\times P} & \Dkq \ar[r] & \M_k \ar[r] & 0 }$.
Puisque $\Ext^1_{\Dkq}(\Dkq , \Dkq) = 0$ et en prenant le foncteur $\Hom_{\Dkq}( \bullet , \Dkq)$, on obtient la suite exacte courte suivante de $\Dkq$-modules à droites :
\[ \xymatrix{
0 \ar[r] & \Hom(\M_k ,\Dkq)  \ar[r] & \Hom(\Dkq ,\Dkq) \ar[r] & \Hom(\Dkq ,\Dkq)  \ar[d] & \\
& & & \Ext^1_{\Dkq}(\M_k , \Dkq) \ar[r] & 0 .
} \]
Comme les algèbres $\Dkq(U)$ sont intègres et $\M_k \simeq \Dkq / P$ avec $P$ non nul, on a $\Hom(\M_k , \Dkq) = 0$.
Par ailleurs, $\Hom(\Dkq ,\Dkq) \simeq \Dkq$ en tant que $\Dkq$-module à droite.
Le morphisme $\Hom(\Dkq ,\Dkq) \to \Hom(\Dkq ,\Dkq)$ ci dessus correspond via cette correspondance au morphisme $\Dkq \overset{P \times}{\longrightarrow} \Dkq$ multiplication à gauche par $P$.
La suite exacte précédente s'écrit donc
\[ \xymatrix{
0 \ar[r] & \Dkq \ar[r]^{P \times} & \Dkq \ar[r] & \Ext^1_{\Dkq}(\M_k , \Dkq) \ar[r] & 0
} . \]
On en déduit que $\Ext^1_{\Dkq}(\M_k , \Dkq) \simeq \Dkq / P\cdot \Dkq$ en tant que $\Dkq$-module à droite.
On écrit $P = \sum_{\ell = 0}^n a_\ell \cdot (\varpi^k \partial)^\ell \in \Dkq(\X)$.
On définit son adjoint formel $P^\vee$ par $P^\vee := \sum_{\ell = 0}^n (-1)^\ell \cdot (\varpi^k \partial)^\ell \cdot a_\ell$.
On vérifie alors que
\[ \M_k^\vee = \Ext_{\Dkq}^1(\M , \Dkq) \otimes_{\O_{\X , \Q}} \omega_{\X , \Q}^{-1} \simeq \Dkq / P^\vee . \]
Comme $P$ est unitaire d'ordre $n$, $P^\vee$ est aussi unitaire d'ordre $n$ au signe pr\`es. Ainsi, $\M_k^\vee$ est un module à connexion de rang $n$.
Enfin \cite[lemme 4.13]{hallopeau1} implique que
\[ \M^\vee = \varprojlim_k \left( \Ext^1_{\Dkq}(\M_k , \Dkq) \otimes_{\O_{\X , \Q}} \omega_{\X , \Q}^{-1} \right) \]
est un module à connexion intégrable de rang $n$.
\end{proof}

\subsection{Cycles caractérisques}\label{section6.2}

On associe dans cette partie finale des multiplicités aux composantes irréductibles verticales des variétés caractéristiques des $\Di$-modules coadmissibles sous-holonomes.
Elles sont définies comme dans la section 1.3 de l'article \cite{adriano} de Abe-Marmora.
Les constructions et les preuves sont similaires.
On suppose dans un premier un temps que la courbe formelle $\X$ est affine munie d'une coordonnée globale fixée $y \in \O_\X(\X)$. On note $\partial$ la dérivation associée.
On commence par introduire et par rappeler quelques notations.
\begin{enumerate}
\item
Tout élément $S$ de $\F_{k , r}(\X)$ s'écrit uniquement sous la forme $S = \sum_{n\in \Z} a_n\cdot \varpi^{[n , k , r]} \partial^n$ avec $[n , k , r] := n \cdot(k \cdot \mathds{1}_{n \geq 0} + r \cdot \mathds{1}_{n < 0})$.
Son ordre est l'entier naturel $\Nb(S) = \max\{ n \in \Z : \| S \|_{k , r} = | a_n | \}$.
Lorsque $\| S \|_{k , r} =1$, $\Nb(S)$ est l'ordre de l'\'el\'ement $\bar{S} = (S \mod \varpi)$.
\item
On note $\F_{k , r}^\circ$ le sous-faisceau de $\F_{k , r}$ composé des éléments $S \in \F_{k , r}$ de norme inférieure ou égale à un, ie les coefficients de $S$ sont dans $\O_\X$.
C'est un faisceau d'algèbres sur $\V$ tel que $\F_{k , r} \simeq K \otimes_\V \F_{k , r}^\circ$.
\item
On désigne par $\F_{k , r}^{\, -}$ le sous-faisceau d'algèbres de $\F_{k , r}$ composé des éléments de la forme $S = \sum_{n < 0} a_n \cdot (\varpi^r \partial)^n$.
\item
On introduit la sous-algèbre commutative $\vp$ de $\F_{k , r}^\circ(\X)$ composée des éléments dont les coefficients sont dans $\V$ et $\vqp := \vp \otimes K$. Ainsi,
\[ \vqp := \left\{ \sum_{n\in \Z} \alpha_n\cdot \varpi^{[n , k , r]} \partial^n \in \F_{k , r}(\X)~: ~ \alpha_n \in K, ~ \lim_{n \to \pm \infty} \alpha_n = 0  \right\} . \]
\end{enumerate}

On équipe la $K$-algèbre $\vqp$ de la norme sous-multiplicative $\| \cdot \|_{k,r}$ de $\F_{k , r}(\X)$ et de la filtration induite par cette norme.
Il est clair que le gradué associé $\gr(\vqp) \simeq \gr(K)[ \xi_k , \xi_r] / [\xi_k \cdot \xi_r]$ est noethérien.
Ainsi, \cite[proposition 1.1]{schneider} implique que l'anneau $\vqp$ est noethérien.
Soit $\M_{k , r}$ un $\F_{k , r}$-module cohérent qui est également de type fini en tant que $\vqp$-module.
Puisque $\vqp$ est cohérent et $\M_{k , r}$ est de type fini sur $\vqp$, c'est un module cohérent sur $\vqp$.
En conséquence, la norme sur $\M_{k , r}$ provenant d'une présentation en tant que $\vqp$-module est complète.
De plus, comme la norme sur $\vqp$ est sous-multiplicative, toutes ces normes sur $\M_{k , r}$ sont équivalentes.
On note $\M_{k , r}^{K\langle \partial \rangle}$ le module $\M_{k , r}$ considéré comme $\vqp$-module cohérent.
De même, le module $\M_{k , r}$ est complet pour toute norme provenant d'une présentation en tant que $\F_{k , r}$-module cohérent et ces normes sont toutes équivalentes.
On prouve dans le prochain lemme que l'application $\vqp$-linéaire bijective $\M_{k , r}^{K\langle \partial \rangle} \longrightarrow \M_{k , r}$ est un homéomorphisme.

\begin{lemma}
Soit $\M_{k , r}$ comme ci-dessus. Alors les topologies respectives de $\M_{k , r}$ en tant que $\F_{k , r}$-module et $\vqp$-module coincident.
\end{lemma}
\begin{proof}
Puisque le produit tensoriel $\F_{k , r} \otimes_{\vqp} \bullet$ est exact à droite, une présentation de $\M_{k , r}^{K\langle \partial \rangle}$ en tant que $\vqp$-module cohérent fournit une présentation de $\M_{k , r}$ en tant que $\F_{k , r}$-module cohérent après tensorisation.
Comme l'application injective $\vqp \hookrightarrow \F_{k , r}$ est continue, le diagramme commutatif
\[ \xymatrix{
(\F_{k , r})^m \ar[r] & (\F_{k , r})^n \ar[r] & \M_{k , r} \ar[r] & 0 \\
(\vqp)^m \ar[r] \ar@{^{(}->}[u] & (\vqp)^n \ar[r] \ar@{^{(}->}[u] & \M_{k , r}^{K\langle \partial \rangle}  \ar[r] \ar[u] & 0
} \]
implique que l'application bijective $\M_{k , r}^{K\langle \partial \rangle}  \longrightarrow \M_{k , r}$ est continue.
Il résulte du théorème de l'application ouverte entre espaces de Banach que l'application $\M_{k , r}^{K\langle \partial \rangle}  \longrightarrow \M_{k , r}$ est un homéomorphisme.
En particulier, les topologies de $\M_{k , r}$ considérées respectivement comme $\vqp$-module et $\F_{k , r}$-module sont \'equivalentes.
\end{proof}

\subsubsection*{Multiplicités verticales pour les $\Di$-modules sous-holonomes}

On d\'emontre maintenant que si $\M = \varprojlim_k \M_k$ est un $\Di$-module sous-holonome, alors pour tout point $x$ du support $\suppi(\M)$ et pour $k \geq r \geq 1$ assez grands, les $\F_{k , r}$-modules cohérents $\M_{k , r}$ sont des $\vqp$-modules libres de rang fini au voisinage de $x$ tels que ces rangs se stabilisent.
La multiplicité verticale de $\M$ en $x$ est alors ce rang commun.
On rappelle que la courbe formelle $\X$ est supposée affine munie d'une coordonnée locale $y$.
Soit $\M_k$ un $\Dkq$-module cohérent tel que $\dim\supp(\M_{k , r}) = 0$.
Comme $\Car(\M_k) \subset \supp(\M_{k , r})$, le module $\M_k$ est holonome et nous savons par la proposition \ref{prop3.22} que $\M_k \simeq \Dkq / \I_k$ pour un certain idéal à gauche non nul $\I_k$ de $\Dkq$.
Soit $P_1 , \dots , P_s$ une base de division de $\I_k$ en $x$ telle que définie dans \cite[section 2.4]{hallopeau1}.
Il s'agit en pratique d'une famille normalis\'ee g\'en\'eratrice optimale pour l'ordre $\Nb$ et pour les valuations des coefficients dominants.
On suppose que les opérateurs $P_i$ sont dans $\I_k(\X)$ quitte à réduire la courbe affine $\X$.
On note $\I_k^\circ$ le faisceau d'idéaux de $\Dk$ engendré par $P_1 , \dots , P_s$.
Alors le $\Dk$-module cohérent $\M_k^\circ := \Dk / \I_k^\circ$ est sans $\varpi$-torsion et $\M_k \simeq \M_k^\circ \otimes_\V K$.
La \og division\fg \, suivante est une adaptation du lemme 1.3.1 de \cite{adriano} dans le cas des microlocalisés $\F_{k , r}$.

\begin{lemma}\label{lemme4.12}
On suppose que $\supp(\M_{k , r}) = \{ x \}$. Soit $t$ un paramètre local de $\X$ en $x$ et $U = \X \backslash \{ x \}$.
Alors, il existe un entier $N \in \N$ et des éléments $R \in \F_{k , r}^\circ(U)$ et $S \in \F_{k , r}^{\,-}(U)$ tels que $(t^N - \varpi \cdot R - S) \in \I_k^\circ(\X)$.
\end{lemma}
\begin{proof}
Soit $P_1 , \dots , P_s$ une base de division de $\I_k^\circ$ comme précédemment.
L'idéal à gauche $\F_{k , r}^\circ \cdot \I_k^\circ$ est également engendré par les éléments $P_1 , \dots , P_s$. L'hypothèse $\supp(\M_{k , r}) = \{ x \}$ implique que $(\F_{k , r}^\circ)_{|U} \cdot (\I_k^\circ)_{|U} = (\F_{k , r}^\circ)_{|U}$.
En conséquence, il existe $Q_1 , \dots , Q_s \in \F_{k , r}^\circ(U)$ tels que $Q_1 \cdot P_1 + \dots + Q_s \cdot P_s = 1$.
Pour toute section $f \in \O_\X(U)$, il est possible de trouver un entier $n \in \N$ tel que $\bar{t}^n \cdot \bar{f} \in \O_\X(\X)$.
En d'autres termes, $t^n \cdot f \in \O_\X(\X) + \varpi \cdot \O_\X(U)$.
Écrivons $Q_i = \sum_{n \geq 0} a_n \cdot (\varpi^k \partial)^n + \sum_{n < 0} a_n \cdot (\varpi^r \partial)^n$ et notons $N_i = \max\{ v_{\O_{\X , x}}(a_n) :  n \in \{ 0 , \dots , \Nb(Q_i)\} \} \in \N$.
Alors
\[
t^{N_i} \cdot Q_i  =
\underbrace{\sum_{n > \Nb(Q_i)} t^{N_i}  a_n \cdot (\varpi^k \partial)^n}_{\in ~ \varpi \cdot \Dk(U)} + 
\underbrace{\sum_{n = 0}^{\Nb(Q_i)} t^{N_i}  a_n \cdot (\varpi^k \partial)^n}_{\in ~ \Dk(\X) + \varpi \cdot \Dk(U)} +
\underbrace{\sum_{n < 0} t^{N_i} a_n \cdot (\varpi^r \partial)^n}_{\in ~ \F_{k , r}^{\,-}(U)} .
\]
On note $N = \max\{ N_i : i \in \{ 1 , \dots , s \} \}$.
Puique que $Q_1 \cdot P_1 + \dots + Q_s \cdot P_s = 1$, il existe $Q_1', \dots , Q_s' \in \Dk(\X)$, $R \in \F_{k , r}^\circ(U)$ et $S \in \F_{k , r}^{\,-}(U)$ tels que $t^N = \sum_{i=1}^s Q_i' \cdot P_i + \varpi \cdot R + S$.
\end{proof}

\begin{prop}
Soit $\M_k$ un $\Dkq$-module cohérent tel que $\supp(\M_{k , r}) = \{x\}$.
Alors le $\F_{k , r}(\X)$-module cohérent $\M_{k , r}(\X)$ est de type fini sur $\vqp$.
\end{prop}
\begin{proof}
En gardant les notations précédentes, nous savons grâce au lemme \ref{lemme4.12} qu'il existe un entier $N \in \N$ accompagné d'un élément $S \in \varpi \cdot \F_{k , r}^\circ(U) +\F_{k , r}^{\,-}(U)$ tel que $t^N - S \in \I_k^\circ(\X)$.
Soit $M := \F_{k , r}(\X) / (t^N - S)$. En considérant la surjection $M \twoheadrightarrow \M_{k , r}(\X)$, il suffit de prouver que $M$ est de type fini sur $\vqp$ pour obtenir la proposition.
Puisque $M$ et $\vqp$ sont complets pour la topologie $\varpi$-adique tout en étant sans $\varpi$-torsion, on peut réduire la preuve à montrer que $M / \varpi$ est fini sur $\vqp / \varpi$.
De plus, $M / \varpi$ et $\vqp / \varpi$ sont complets pour la filtration induite par l'ordre des opérateurs différentiels.
Ainsi, il suffit de prouver que $\gr(M / \varpi)$ est fini sur $\gr(\vqp / \varpi)$, où $\gr$ désigne le gradué pour l'ordre.
Puisque $S \in \F_{k , r}^{\,-}(U)$, on a $\gr(M / \varpi) = \gr(\F_{X , k , r} / (\bar{t}^N))$, où $\F_{X , k , r} = \F_{k , r}^\circ \otimes_\V \kappa$.
Enfin, on obtient comme dans \cite[lemme 1.3.2]{adriano} que $\gr(M / \varpi)$ est fini sur $gr(\vqp / \varpi)$.
\end{proof}

On démontre enfin, en utilisant le même argument que dans \cite{adriano}, que si un $\F_{k , r}$-module cohérent $\M_{k , r}$ est tel que $\M_{k , r}(\X)$ soit de type fini en tant que $\vqp$-module, alors $\M_{k , r}(\X)$ est un $\vqp$-module libre de rang fini.
On rappelle que $\X$ est suppos\'ee \^etre une courbe affine mumie d'une coordonnée globale fixée $y \in \O_\X(\X)$ et que $\partial$ est la dérivation associée.
Pour des nombres réels $0 < a \leq b < \infty$, on considère l'anneau des fonctions analytiques en la variable $z$ :
\[ \B_z ([a,b]) := \left\{ \sum_{n \in \Z} \alpha_n \cdot z^n : ~ \alpha_i \in K, ~ \lim_{n \to + \infty} |\alpha_n| \cdot b^n = 0, ~ \lim_{n \to - \infty} |\alpha_n| \cdot a^n = 0  \right\}. \]
C'est un sous-anneau de l'anneau $\a_{z , K}(\{ a , b])$ défini dans \cite[partie 1.3.6]{adriano}.
En conséquence, on en déduit que le nombre de pentes des graphes des éléments de $\B_z ([a,b])$ est fini, et l'on dispose d'un théorème de préparation de Weierstrass pour $\B_z ([a,b])$.
Ceci implique que $\B_z ([a,b])$ est un anneau principal.
Dans les définitions de $\F_{k , r}$ et $\vqp$, $k \geq r \geq 1$.
En particulier,  $0 < |\varpi|^{-r} \leq |\varpi|^{-k}$. Le lemme suivant découle du fait que la $K$-algèbre $\vqp$ est commutative.

\begin{lemma}\label{lemme4.14}
Nous avons un isomorphisme de $K$-algèbres
\[ \vqp \overset{\simeq}{\longrightarrow} \B_z([ |\varpi|^{-r} , |\varpi|^{-k}]) \]
envoyant $\partial$ sur $z$.
En particulier, $\vqp$ est un anneau principal.
\end{lemma}

Soit $\M_{k , r}$ un $\F_{k , r}$-module cohérent tel que $\M_{k , r}(\X)$ soit un $\vqp$-module de type fini.
Via l'isomorphisme du lemme \ref{lemme4.14}, $\M_{k , r}(\X)$ est un $\B_z([ |\varpi|^{-r} , |\varpi|^{-k}])$-module de type fini.
On équipe ce module de la connexion donnée par $\nabla(m) := (-y m)\otimes dz$. Comme l'action de $z$ sur $\M_{k , r}(\X)$ coïncide avec celle de $\partial$, il s'agit d'une connexion.
En effet, on a  $[ \partial , y ] = \partial \cdot y - y \cdot \partial = 1$ et
\begin{align*}
\nabla(z\cdot m) & = \nabla(\partial \cdot m) = (-y \partial m)\otimes dz  = (1 - \partial y) m\otimes dz  = \partial m\otimes dz - \partial ym \otimes dz  \\
& =  \partial m\otimes dz + z \cdot( -ym \otimes dz) .
\end{align*}
Puisque $\B_z([ |\varpi|^{-r} , |\varpi|^{-k}])$ est un anneau principal et $\M_{k , r}(\X)$ est un $\B_z([ |\varpi|^{-r} , |\varpi|^{-k}])$-module fini, nous déduisons du lemme 1.3.10 de \cite{adriano} que $\M_{k , r}(\X)$ est un $\B_z([ |\varpi|^{-r} , |\varpi|^{-k}])$-module libre de rang fini.

\begin{cor}\label{cor4.15}
Soit $\M_k$ un $\Dkq$-module cohérent tel que $\supp(\M_{k , r}) = \{x\}$.
Alors $\M_{k , r}(\X)$ est un $\vqp$-module libre de rang fini noté $\text{rk}_x(\M_{k , r})$.
\end{cor}

Il reste à établir que les rangs que nous obtenons ainsi ne dépendent ni de $k$ ni de $r$ pour des entiers suffisamment grands.

\begin{lemma}\label{lemme4.16}
Nous avons des isomorphismes canoniques de bi-modules
\[ \F_{k , r} \simeq \vqp \, \widehat{\otimes}_{K\langle\partial\rangle^{(k+1,r)}} \, \F_{k+1 , r} \hspace{0.3cm} \text{et} \hspace{0.3cm}  \F_{k , r+1} \simeq K\langle\partial\rangle^{(k, r+1)} \, \widehat{\otimes}_{\vqp} \, \F_{k , r} . \]
Le produit tensoriel complété $\widehat{\otimes}$ est pris pour la topologie $\varpi$-adique.
\end{lemma}
\begin{proof}
On démontre seulement le premier isomorphisme de bi-modules $\F_{k , r} \simeq \vqp \, \widehat{\otimes}_{K\langle\partial\rangle^{(k+1,r)}} \F_{k+1 , r} $.
L'argument pour le second est similaire.
On rappelle que $\F_{k+1 , r}$ et $K\langle\partial\rangle^{(k+1,r)}$ sont des sous-faisceaux de $\F_{k , r}$.
Comme $\F_{k , r}$ est complet pour la topologie $\varpi$-adique, on obtient une application naturelle $\epsilon : \vqp \, \widehat{\otimes}_{K\langle\partial\rangle^{(k+1,r)}} \F_{k+1 , r} \to \F_{k , r}$ envoyant $P \otimes S$ sur $P \cdot S$.
Puisque le produit tensoriel commute avec les images directes, il suffit de montrer que pour tout ouvert affine $U$ de $\X$, l'application
\[ \epsilon : \vqp \, \widehat{\otimes}_{K\langle\partial\rangle^{(k+1,r)}} \F_{k+1 , r}(U) \to \F_{k , r}(U) \]
est un isomorphisme.
Soit
\[ S = \underbrace{\sum_{n = 0}^\infty a_n \cdot (\varpi^k \partial)^n}_{P^+ \in \Dkq(U)} + \underbrace{\sum_{n = 1}^\infty a_{-n} \cdot (\varpi^r \partial)^{-n}}_{P^- \in \F_{k , r}^{\,-}(U)} \in \F_{k , r}(U). \]
On a $P^- \in \F_{k , r}^{\,-}(U) = \F_{k+1 , r}^{\,-}(U)$. De plus,
$P^+ = \lim_{m \to \infty} P_m$ où
\[ P_m = \sum_{n = 0}^m a_n \cdot (\varpi^k \partial)^n = \epsilon \left(\sum_{n = 0}^m a_n \otimes (\varpi^k \partial)^n\right)  \]
avec $\sum_{n = 0}^m a_n \otimes (\varpi^k \partial)^n \in \vqp  \otimes_{K\langle\partial\rangle_\X^{(k+1,r)}} \F_{\X , k+1 , r}$.
On peut vérifier que la suite $\left(\sum_{n = 0}^m a_n \otimes (\varpi^k \partial)^n\right)_m$ converge pour la topologie $\varpi$-adique vers $\sum_{n = 0}^\infty a_n \otimes (\varpi^k \partial)^n$.
Ainsi,
\[ S = \epsilon\left(\sum_{n = 0}^\infty \left(a_n \otimes (\varpi^k \partial)^n\right) + 1 \otimes P^-\right) . \]
On obtient donc une application réciproque à droite bien définie pour $\epsilon$ :
\[ \varphi : \F_{k , r}(U) \to \vqp \, \widehat{\otimes}_{K\langle\partial\rangle^{(k+1,r)}} \F_{k+1 , r}(U) \]
\[ \sum_{n = 0}^\infty a_n \cdot (\varpi^k \partial)^n + \sum_{n = 1}^\infty a_{-n} \cdot (\varpi^r \partial)^{-n} \mapsto \sum_{n = 0}^\infty a_n \otimes (\varpi^k \partial)^n + 1 \otimes \sum_{n = 1}^\infty a_{-n} \cdot (\varpi^r \partial)^{-n} .  \]
Un calcul assez similaire montre que $\varphi \circ \epsilon = \id$.
\end{proof}

Soit $\M = \varprojlim_k \M_k$ un $\Di$-module coadmissible tel que $\suppi(\M) = \{x\}$.
Comme $\suppi(\M) =\bigcap_{r \geq 1} \supp (\Mr) $, il existe un entier $r_0 \geq 1$ tel que, pour tout $r \geq r_0$, $\suppi(\M_\X) = \supp(\Mr) = \{x\}$.
Puisque $\supp (\Mr) = \overline{\bigcup_{k \geq r} \supp (\M_{k , r})}$ et la suite des supports $(\supp (\M_{k , r}))_{k \geq r}$ est croissante, on en déduit pour tout niveau de congruence $k \geq r$ suffisamment grand que $\supp(\M_{k , r}) = \{ x \}$
Ainsi, d'après le corollaire \ref{cor4.15} nous savons que pour $k \geq r$ suffisamment grand, $\M_{k , r}$ est localement autour de $x$ un $\vqp$-module libre de rang $\text{rk}_x(\M_{k , r})$.
%On démontre dans la proposition suivante que ce rang ne dépend ni de $k$ ni de $r$ pour des entiers $k \geq r \geq r_0$ suffisamment grands.

\begin{prop}
Soit $\M = \varprojlim_k \M_k$ un $\Di$-module coadmissible tel que $\suppi(\M) = \{x\}$ et $r \geq r_0$ comme ci-dessus.
\begin{enumerate}
\item
On dispose d'un isomorphisme $\M_{k , r} \simeq \vqp \otimes_{K\langle \partial \rangle^{k+1 , r}} \M_{k+1 , r}$
de $K\langle \partial \rangle^{k , r} $-modules. En particulier, pour $k$ suffisamment grand, $\text{rk}_x(\M_{k+1 , r}) = \text{rk}_x(\M_{k , r})$.
Ce rang est noté $\text{rk}_x(\Mr)$.
\item
Pour $k > r$, on a un isomorphisme $\M_{k, r+1} \simeq K\langle \partial \rangle^{k , r+1} \otimes_{\vqp} \M_{k , r}$
de $K\langle \partial \rangle^{k , r+1} $-modules. Il s'ensuit que $\text{rk}_x(\M_{k , r+1}) \leq \text{rk}_x(\M_{k , r})$.
\item
Pour $r \geq r_0$ assez grand, on a $\text{rk}_x(\Mr) = \text{rk}_x(\tilde{\M}_{\infty , r+1})$.
\end{enumerate}
\end{prop}
\begin{proof}
On déduit du lemme \ref{lemme4.16} les isomorphismes suivants :
\begin{align*}
\M_{k , r} & \simeq \F_{k , r} \otimes_{\F_{k+1 , r}} \M_{k+1 , r} \simeq \F_{k , r} \widehat{\otimes}_{\F{k+1 , r}} \,  \M_{k+1 , r} \\
& \simeq \vqp \widehat{\otimes}_{K\langle\partial\rangle^{(k+1,r)}} \F_{k+1 , r} \widehat{\otimes}_{\F{k+1 , r}} \,  \M_{k+1 , r} \\
& \simeq \vqp \widehat{\otimes}_{K\langle\partial\rangle^{(k+1,r)}} \,  \M_{k+1 , r} .
\end{align*}
Puisque $\M_{k+1 , r}$ est de type fini sur $K\langle\partial\rangle^{(k+1,r)}$, on a
\[ \vqp \widehat{\otimes}_{K\langle\partial\rangle^{(k+1,r)}} \M_{k+1 , r} \simeq \vqp  \otimes_{K\langle\partial\rangle^{(k+1,r)}} \M_{k+1 , r} . \]
On obtient le premier isomorphisme de la proposition. La preuve du second est analogue.
On prouve comme dans la proposition \ref{prop2.28} que $\vqp$ est plat sur $K\langle\partial\rangle^{(k+1,r)}$.
Ce fait, ainsi que l'isomorphisme de $\vqp$-modules $\M_{k , r} \simeq \vqp \otimes_{K\langle \partial \rangle^{(k+1 , r)}} \M_{k+1 , r}$, impliquent que $\text{rk}_x(\M_{k+1 , r}) = \text{rk}_x(\M_{k , r})$.
De plus, on déduit du second isomorphisme $\M_{k, r+1} \simeq K\langle \partial \rangle^{(k , r+1)} \otimes_{\vqp} \M_{k , r}$ que $\text{rk}_x(\M_{k , r+1}) \leq \text{rk}_x(\M_{k , r})$.
Ainsi, $\text{rk}_x(\Mr) \leq \text{rk}_x(\M_{\X , \infty , r+1})$.
Puisque $(\text{rk}_x(\Mr))_{r \geq r_0}$ est une suite décroissante d'entiers positifs, elle est stationnaire.
Cela prouve le troisième point de la proposition.
\end{proof}

Cette proposition nous dit que si $\M = \varprojlim_k \M_k$ est un $\Di$-module coadmissible tel que $\suppi(\M) = \{x\}$, alors les rangs $\text{rk}_x(\Mr)$ se stabilisent pour $r$ suffisamment grand.

\begin{definition}
Soit $\M = \varprojlim_k \M_k$ un $\Di$-module coadmissible tel que le support $\suppi(\M)$ soit réduit à un point fermé $\{x\}$.
La multiplicité $m_x(\M)$ de $\M$ en $x$ est définie comme étant le rang commun en $x$ des modules $\M_{k , r}$ pour $k \geq r$ suffisamment grands.
\end{definition}

Si $x$ est un point fermé de la courbe affine lisse $\X$ n'appartenant pas au support $\suppi(\M)$, on pose $m_x(\M_\X) := 0$.

\begin{lemma}\label{lemme4.20}
Soit $0 \to \Nn \to \M \to \L \to 0$ une suite exacte courte de $\Di$-modules coadmissibles telle que $\suppi(\M) = \{ x \}$.
Alors $\suppi(\Nn) \cup \suppi(\L) = \{ x \}$ et $m_x(\M) = m_x(\Nn) + m_x(\L)$.
\end{lemma}
\begin{proof}
Soit $r_0 \geq 1$ tel que $\supp(\Mr) = \{ x \}$ pour tout $r \geq r_0$.
Alors $\supp(\M_{k , r}) = \{ x \}$ pour tout $k \geq r$.
Puisque la suite $0 \to \Nn \to \M \to \L \to 0$ de modules coadmissibles est exacte, la suite de $\Dkq$-modules cohérents $0 \to \Nn_k \to \M_k \to \L_k \to 0$  est exacte pour tout niveau de congruence $k \geq 0$.
Il a d\'ej\`a \'et\'e vu que $\supp(\M_{k , r}) = \supp(\Nn_{k , r}) \cup \supp(\L_{k , r})$.
Il s'ensuit que $\suppi(\Nn) \cup \suppi(\L) = \{ x \}$.
En conséquence du corollaire \ref{cor4.15}, les modules $\F_{k , r}(\X)$-modules $\M_{k , r}(\X)$, $\Nn_{k , r}(\X)$ et $\L_{k , r}(\X)$ sont des $\vqp$-modules libres de rang fini.
Puisque la courbe formelle lisse $\X$ est supposée affine, le théorème A pour les $\Dkq$-modules cohérents (\cite[corollaire 2.2.15]{huyghe}) implique que $0 \to \Nn_{k}(\X) \to \M_k(\X) \to \L_k(\X) \to 0$ est une suite exacte courte de $\Dkq(\X)$-modules.
Par exactitude de $\F_{k , r}(\X)$ sur $\Dkq(\X)$ (proposition \ref{prop2.28}), on obtient une suite exacte $0 \to \Nn_{k , r}(\X) \to \M_{k , r}(\X) \to \L_{k, r}(\X) \to 0$ de $\F_{k , r}(\X)$-modules.
Cette suite est également une suite exacte de $\vqp$-modules en considérant $\M_{k , r}(\X)$, $\Nn_{k , r}(\X)$ et $\L_{k , r}(\X)$ comme des $\vqp$-modules.
Ainsi,
\[ \text{rk}_x(\M_{k , r}(\X)) = \text{rk}_x(\Nn_{k , r}(\X)) + \text{rk}_x(\L_{k , r}(\X)) . \]
Puisque pour tout $k >> r \geq r_0$, nous avons $m_x(\M) = \text{rk}_x(\M_{k , r}(\X))$, $m_x(\Nn) = \text{rk}_x(\Nn_{k , r}(\X))$ et $m_x(\L) = \text{rk}_x(\L_{k , r}(\X))$.
Nous obtenons donc l'additivité des multiplicités en $x$.
\end{proof}

\subsubsection*{Cycles caractéristiques}

Retournons au cas d'une courbe formelle lisse $\X$ non nécessairement affine. Soit $\M$ un $\Di$-module coadmissible sous-holonome.
Alors $\dim(\suppi(\M)) = 0$, et la variété caractéristique $\Car(\M)$ est formée de composantes irréductibles verticales et, potentiellement d'une composante horizontale égale à la section nulle de l'espace cotangent $T^*\X$.
Pour un point fermé $x$ du support $\suppi(\M)$, on note $U$ un ouvert affine de $\X$ pour lequel $U \cap \suppi(\M) = \{x\}$.
En travaillant sur $U$, on peut associer à $\M$ une multiplicité $m_x(\M) \in \N^*$ en $x$.
Les multiplicités que nous obtenons ainsi sont additives pour les suites exactes courtes de $\Di$-modules coadmissibles sous-holonomes.
Soit $C$ une composante irréductible verticale de la variété caractéristique $\Car(\M)$.
L'abscisse de $C$ est un point $x_C$ du support $\suppi(\M)$ ; on appelle $m_C(\M)$ la multiplicité $m_{x_C}(\M)$ de la composante irréductible $C$.
On rappelle que $m_0(\M)$ est la multiplicité de la composante horizontale de $\Car(\M)$.
C'est le rang du $\O_{\X, \Q}$-module localement libre $(\M)_{|\X \backslash \suppi(\M)}$.
Si $\M = 0$, alors toutes les multiplicités de $\M$ sont nulles.

\begin{definition}
Soit $\M$ un $\Di$-module coadmissible sous-holonome.
On note $\Irr(\M)$ l'ensemble des composantes irréductibles verticales de $\Car(\M)$. On définit le cycle caractéristique de $\M$ comme la somme formelle finie \`a coefficients entier naturels
\[
\CC(\M) := m_0(\M) \cdot \X + \sum_{C \in \Irr(\M)} m_C(\M) \cdot C,
\]
où l'on identifie toujours $\X$ avec la section nulle de $T^*\X$. Pour $\M = 0$, on a $\CC(\M) = 0$.
\end{definition}

\begin{example}\,
\begin{enumerate}
\item
Soit $P$ un point ferm\'e de $\X$, $u : P \hookrightarrow \X$ l'inclusion associ\'ee et $t$ un paramètre local en $P$.
Alors $u_+ \O_P := \varprojlim_k \left( \Dkq / \Dkq \cdot t \right)$ est sous holonome et sa variété caractéristique consiste en une composante irréductible verticale d'abscisse $P$ d'après l'exemple \ref{ex6.10}.
De plus, la multiplicité associée est égale \`a un.
\item
Soit $\X = \Spf \V\langle t \rangle$, $\mathbb{D} = \X_K = \Sp K \langle t \rangle$ le disque analytique rigide de rayon un et $\text{sp} : \mathbb{D} \to \X$ le morphisme de sp\'ecialisation.
On dispose sur $\mathbb{D}$ du faisceau $\wideparen{\D}_\mathbb{D}$ des opérateurs différentiels introduits dans \cite{ardakov} par Ardakov-Wadsley.
On note $\mathbb{D}^* = \mathbb{D} \backslash \{0\}$ le disque \'epoint\'e et $j : \mathbb{D}^* \hookrightarrow \mathbb{D}$ l'inclusion associée.
Alors $j_* \O_{\mathbb{D}^*}$ est un $\wideparen{\D}_\mathbb{D}$-module coadmissible faiblement holonome d'apr\`es \cite[section 10.5]{ABW2}.
En identifiant $\mathbb{D}$ avec l'espace de Zariski-Riemann associ\'e \`a $\X$ et en utilisant l'équivalence de cat\'egories \cite[proposition 3.1.12]{huyghe},
on obtient sur $\X$ un $\Di$-module coadmissible $ \M = \text{sp}_*\left(j_* \O_{\mathbb{D}^*}\right) \simeq \varprojlim_k \left(\Dkq / (t \cdot \partial_t +1 \right)$.
Ce dernier est sous-holonome.
En effet, $\supp(\F_{k , r} \otimes_{\Dkq} \Dkq / (t \cdot \partial_t +1)) = \{ t =0\}$ et donc $\suppi (\M) = \{ t =0\}$.
Par ailleurs, les $\F_{k , r} $-modules cohérents $\F_{k , r} \otimes_{\Dkq} \Dkq / (t \cdot \partial_t +1))$ sont libres de rang un sur $\vqp$ engendr\'es par $t$.
Ainsi, la multiplicité de $\M$ en z\'ero vaut un et sa variété caractéristique contient exactement une composante horizontale de multiplicité une puis une composante verticale passant par l'origine de multiplicité aussi égale \`a un.
On consid\`ere le $\Di$-module
\[ \wideparen{\O_{\X , \Q}[1/t]} = \left\{ \sum_{n=0}^\infty a_n \cdot t^{-n},~ \forall R>0, ~ |a_n| \cdot R^n \to 0 \right\} \]
donn\'e par l'action naturelle de la d\'erivation $\partial_t$ sur $\frac{1}{t}$.
En tant qu'algèbre de Fr\'echet, $\wideparen{\O_{\X , \Q}[1/t]} \simeq \varprojlim_k \widehat{\O_{\X , \Q}[\varpi^k / t]}$.
Les algèbres $\widehat{\O_{\X , \Q}[\varpi^k / t]}$ ont une structure naturelle de $\Dkq$-modules induisant la structure de $\Di$-module sur $\wideparen{\O_{\X , \Q}[1/t]}$.
Par ailleurs, on dispose d'injections $\Dkq$-lin\'eaires $\widehat{\O_{\X , \Q}[\varpi^k / t]} \hookrightarrow \Dkq / (t \cdot \partial_t +1)$ envoyant $(\frac{1}{t})^n$ sur $(-\partial_t)^n$ car $t\cdot \partial_t =-1$ dans $\Dkq / (t \cdot \partial_t +1)$.
L'algèbre $\widehat{\O_{\X , \Q}[\varpi^k / t]}$ admet comme sous-module la connexion $\O_{\X , \Q}$ de multiplicité une sans être égale a cette dernière : la somme des multiplicités de $\widehat{\O_{\X , \Q}[\varpi^k / t]}$ vaut donc au moins deux.
Puisque le module $\Dkq / (t \cdot \partial_t +1)$ a une multiplicité globale égale \`a deux, nécessairement $\widehat{\O_{\X , \Q}[\varpi^k / t]} \simeq \Dkq / (t \cdot \partial_t +1)$ en tant que $\Dkq$-modules.
Ces isomorphismes étant donn\'es par la flèche $(\frac{1}{t})^n \mapsto (-\partial_t)^n$ ne dependant pas de $k$, ils passent \`a la limite projective pour donner un isomorphisme de $\Di$-modules $\M \simeq \wideparen{\O_{\X , \Q}[1/t]}$.
En particulier, le $\Di$-module $\wideparen{\O_{\X , \Q}[1/t]}$ est coadmissible et sous-holonome.
\end{enumerate}
\end{example}

On obtient la proposition suivante grâce au lemme \ref{lemme4.20}.

\begin{prop}\label{prop4.22}
Soit $0 \to \Nn \to \M \to \L \to 0$ une suite exacte de $\Di$-modules coadmissibles sous-holonomes.
Alors $\CC(\M) = \CC(\Nn) + \CC(\L)$. De plus, un $\Di$-module coadmissible sous-holonome $\M$ est nul si et seulement si $\CC(\M) = 0$.
\end{prop}

Une conséquence directe de cette proposition est le résultat suivant ; voir la preuve de \cite[proposition 3.19]{hallopeau1} pour plus de d\'etails.

\begin{cor}\label{cor1.16}
Tout $\Di$-module sous-holonome coadmissible $\M$ est de longueur finie, inférieure ou égale à $m(\M) := m_0(\M) + \sum_{x \in \suppi(\M)} m_x(\M) \in \N$.
\end{cor}

On termine cet article par démontrer que la variété caractéristique $\Car(\M)$ d'un $\Di$-module sous-holonome $\M = \varprojlim_k \M_k$ coincide avec les variétés caractéristiques $\Car(\M_k)$ des $\Dkq$-modules holonomes $\M_k$ pour $k$ suffisamment grand.
En particulier, les variétés caractéristiques $\Car(\M_k)$ se stabilisent lorsque $k \to + \infty$.
Ce résultat est clairement faux dans le cas général. En effet, si $P \in \Di$ est un opérateur différentiel d'ordre infini, alors les modules $\M_k = \Dkq / P$ sont holonomes mais $\dim(\Car(\Di / P)) = T^*\X$.
De plus, les abscisses des composantes verticales des variétés caractéristiques $\Car(\M_k)$ sont données par les zéros du coefficient d'ordre $\Nb(P)$ de $P$.
Comme $\Nb(P) \to + \infty$ lorsque $k \to \infty$, il n'y a a priori aucun moyen de contrôler ces composantes verticales pour $k$ variable.

\begin{prop}
Un $\Di$-module coadmissible $\M = \varprojlim_k \M_k$ est sous-holonome si et seulement si il existe un niveau de congruence $k$ pour lequel le morphisme $\M \to \M_k$ est injectif et si le $\Dkq$-module cohérent $\M_k$ est holonome.
Dans ce cas, les morphismes $\M \to \M_k$ sont injectifs pour tout $k$ suffisamment grand.
\end{prop}
\begin{proof}
Supposons tout d'abord qu'il existe un niveau de congruence $k$ pour lequel le morphisme de transition $\M \to \M_k$ est injectif avec $\M_k$ holonome.
Soit alors $U$ un ouvert dense de $\X$ tel que $(\M_k)_{|U}$ soit une connexion intégrable, ie un $\O_{\X , \Q}$-module cohérent.
Il en découle que $\M_{|U}$ est aussi une connexion intégrable : $\M$ est donc sous-holonome.
Supposons maintenant que le module $\M$ soit sous-holonome. Alors les modules cohérents $\M_k$ sont holonomes.
Il reste à démontrer que les morphismes de transition $\Di$-linéaires $\M \to \M_k$ sont injectifs pour tout niveau de congruence $k$ suffisamment grand.
Soit $\Nn(k)$ le noyau du morphisme $\M \to \M_k$. Comme $\Nn(k)$ est localement un sous-module fermé de $\M$, c'est un $\Di$-module coadmissible.
Puisque $\M_k \simeq \Dkq \otimes_{\Di} \M$, on a $\Dkq \otimes_{\Di} \Nn(k) = 0$.
Par ailleurs, il est clair que $\Nn(k+1)$ est un sous-module coadmissible de $\Nn(k)$.
Le module $\M$ étant sous-holonome, les modules $\Nn(k)$ le sont aussi.
On déduit alors de la proposition \ref{prop4.22} que la suite des cycles caractéristique $(\CC(\Nn(k)))_k$ est décroissante.
Puisque que les coefficients des cycles caractéristiques sont des entiers positifs, cette suite est stationnaire.
Il en découle que la suite décroissante de $\Di$-modules coadmissibles $(\Nn(k))_k$ est stationnaire.
Autrement dit, il existe un niveau de congruence $k_0$ tel que pour tout $k \geq k_0$, $\Nn(k) = \Nn(k_0)$.
On rappelle que $\Dkq \otimes_{\Di} \Nn(k) = 0$. Ainsi, pour tout $k \geq k_0$ on a $\Dkq \otimes_{\Di} \Nn(k_0) = 0$.
Comme le $\Di$-module $\Nn(k_0)$ est coadmissible, il vient $\Nn(k_0) \simeq \varprojlim_{k \geq k_0} \Dkq \otimes_{\Di} \Nn(k_0) = 0$.
Les morphismes de transition $\M \to \M_k$ sont donc injectifs pour tout $k \geq k_0$.
\end{proof}

\begin{cor}
Soit $\M = \varprojlim_k \M_k$ un $\Di$-module sous-holonome.
Alors pour tout niveau de congruence $k$ suffisamment grand,
$\Car(\M) = \Car(\M_k)$ en tant qu'espaces topologiques.
\end{cor}
\begin{proof}
Il a déjà été vu que $\Car(\M_k) \subset \Car(\M)$ pour $k$ suffisamment grand.
On suppose le module $\M$ sous-holonome. La proposition précédente nous dit que pour les niveaux de congruence $k$ grands, $\M \hookrightarrow \M_k$.
En particulier, $\M_{|U}$ est une connexion dès que $(\M_k)_{|U}$ est une connexion pour un certain ouvert $U$ de $\X$.
Il en découle que le support $\suppi(\M)$ est contenu dans l'ensemble des abscisses des composantes verticales de $\Car(\M_k)$.
Autrement dit, $\Car(\M) \subset \Car(\M_k)$.
\end{proof}

\bibliographystyle{plain}
\bibliography{biblio.bib}

\end{document}